\documentclass[11pt,letterpaper]{article}
\usepackage{setspace}
\usepackage[usenames,dvipsnames]{color}

\usepackage{fullpage,amsfonts,amsmath,amsthm,amssymb,mathrsfs,graphicx,epstopdf}
\usepackage[top=1in, bottom=1in, left=1in, right=1in]{geometry}
\usepackage{amsfonts,amsmath,latexsym,hyperref,graphicx,amssymb,amsthm,url,lineno,float,tikz}
\usepackage[]{algorithm2e}
\usepackage[abbrev,msc-links]{amsrefs}  
\bibliographystyle{elsarticle-num}
\usetikzlibrary{arrows}
\usepackage{rotating}
\usetikzlibrary{calc}
\usetikzlibrary{positioning}
\usetikzlibrary{patterns}
\usetikzlibrary{shapes}
\usepackage{subcaption}
\usepackage{tikz}
\usepackage{bbm}
\usetikzlibrary{arrows.meta}
\usepackage[normalem]{ulem}

\newcommand{\lab}[1]{\label{#1}}           

\newcommand{\remove}[1]{}
\newcommand\eqn[1]{(\ref{#1})}

\newcommand{\be}{\begin{equation}}
\newcommand{\bel}[1]{\begin{equation}\lab{#1}\ }
\newcommand{\ee}{\end{equation}}
\newcommand{\bea}{\begin{eqnarray}}
\newcommand{\eea}{\end{eqnarray}}
\newcommand{\bean}{\begin{eqnarray*}}
	\newcommand{\eean}{\end{eqnarray*}}

\newtheorem{thm}{Theorem}

\newtheorem{lemma}[thm]{Lemma}
\newtheorem{definition}[thm]{Definition}
\newtheorem{claim}[thm]{Claim}

\newtheorem{remark}[thm]{Remark}

\def\qed{~~\vrule height8pt width4pt depth0pt}
\def\ss{{\smallskip}}

\def\totmultcut{t_0}
\def\totmult{t}

\def\UB{{\overline f}}
\def\LB{{\underline b}}

\def\MatrixGen{{\tt MATRIXGEN}}
\def\GraphGen{{\tt MULTIGRAPHGEN}}
\def\SimpleGen{{\tt INC-GEN}}
\def\SimpleBiGen{{\tt INC-BIPARTITE}}
\def\Gen{{\tt Gen}}
\def\Brute{{\tt Brute}}
\def\sbrute{{\tt SubBrute}}


\def\A{{\mathcal A}}

\def\D{{\mathcal D}}

\def\H{{\mathcal H}}

\def\M{{\mathcal M}}


\def\pr{{\mathbb P}}


\def\bfa{{\bf a}}

\def\bfd{{\bf d}}
\def\bfe{{\bf e}}

\def\bfg{{\bf g}}
\def\bfh{{\bf h}}

\def\bfm{{\bf m}}

\def\bfp{{\bf p}}

\def\bfs{{\bf s}}
\def\bft{{\bf t}}

\def\inS{G^{-}(S)}
\def\outS{G^{+}(S)}


\def\bbN{{\mathbb N}}



\def\eps{\epsilon}


\date{}
\title{Linear-time uniform generation of random sparse contingency tables with specified marginals}
\author{Andrii Arman\and Pu Gao\and Nicholas Wormald}
\author{
	Andrii Arman\\
	School of Mathematics\\
	Monash University\\
	andrii.arman@monash.edu
	\and
	Pu Gao\thanks{Research supported by ARC DP160100835 and NSERC.}\\
	Department of Combinatorics and Optimization\\
	University of Waterloo\\
	pu.gao@uwaterloo.ca
	\and
	Nicholas Wormald\thanks{Research supported by  ARC DP160100835.}\\
	School of Mathematics\\
	Monash University\\
	nick.wormald@monash.edu }
\begin{document}
	\maketitle
	
	\begin{abstract}
		
		We give an algorithm which generates a uniformly random contingency table with specified marginals, i.e.\ a matrix with non-negative integer values and specified row and column sums.   Such algorithms are useful in statistics and combinatorics.   When $\Delta^4< M/5$,    where $\Delta$ is the maximum of the row and column sums and $M$ is the sum of all entries of the matrix, our algorithm runs in time linear in $M$ in expectation. Most previously published algorithms for this problem are approximate samplers based on Markov chain Monte Carlo, whose provable bounds on the mixing time are typically polynomials with rather large degrees. 
	\end{abstract}
	\thispagestyle{empty}
	\newpage
	\setcounter{page}{1}
	
	\section{Introduction and main results}

	Let  $\bfs=(s_1,s_2,\ldots, s_m)$ and $\bft=(t_1,t_2,\ldots, t_n)$  be two vectors of positive integers such that $\sum_{i=1}^m s_i =\sum_{j=1}^n t_j=M$. A contingency table with marginals $(\bfs, \bft)$ is an $m\times n$ matrix with nonnegative integer entries such that the sum of entries in the $i$-th row is $s_i$ and the sum of entries in the $j$-th column is $t_j$, for every $i\in [m]$ and $j\in [n]$.   In this paper, we provide an algorithm,  \MatrixGen, to generate  a uniformly random contingency table with specified marginals.   \MatrixGen\  has two advantages over all previous  algorithms for this problem: firstly, it is exact, so there is no approximation error, and secondly  it runs much faster than the previous algorithms:  the expected runtime is linear in $M$ provided that the maximum of the $s_i$ and $t_j$ is at most $(M/5)^{1/4}$.  While most previously research on this problem uses  Markov Chain Monte Carlo  (MCMC), which we describe below, we base our work instead on the switching method. This  has been used in the past to generate graphs with given degrees uniformly at random, and requires significant modification to apply to non-binary matrices. The most important ingredient for achieving linear time is a technique that we recently developed in~\cite{arman19}.

	The problem of how to uniformly generate members of a finite set of objects has a long history. Early works include those by Wilf~\cite{wilf77,wilf81} on uniform generation of trees, and other instances of combinatorial objects with recurrent structure.  Jerrum, Valiant and Vazirani~\cite{jerrum86} introduced a unified notation for random generation and for complexity classes of generation problems. In particular they observed that, roughly speaking, uniform generation is no harder than counting, whereas  
	approximate generation is equally  as  hard as approximate counting. 
	Several generic techniques are commonly used for random (often approximate) generation. The most commonly applied method is the so-called Markov Chain Monte Carlo (MCMC) method. It defines a Markov chain on the set $\Omega$ of objects which we aim to generate uniformly.   The Markov chain is designed so that it is ergodic and its stationary distribution is uniform. Then we can run the chain sufficiently long (measured by the mixing time~\cite{levin17}) and then output.  	The MCMC method is efficient if the mixing time of the Markov chain is small. The chain is called  ``rapidly mixing'' if its mixing time  is bounded by some  polynomial in the size of the object being generated.  However, for many Markov chains designed for combinatorial generation problems, the degree of this polynomial is   large, too large for practical use of the MCMC method. 
	Another technique, called   rejection sampling, works when there is an efficient algorithm for generation of objects in a larger space $\Omega^*$ containing $\Omega$, where all objects in $\Omega$ appear with equal probability. The rejection scheme then rejects each generated object outside of $\Omega$ until finding an object in $\Omega$, and outputs it. This scheme is consequently an exactly uniform sampler. However. if $|\Omega|/|\Omega^*|$ is too small then the rejection scheme is not efficient. In some cases, the switching method~\cite{mckay90} can be used to boost  the efficiency by progressively transforming   an object in $\Omega^*$, using repeated steps that maintain a certain uniformity property, until reaching some object in $\Omega$. These separate steps also incorporate   rejection schemes.

	Contingency tables are extensively used in social sciences, statistics and medicine research, where categorical data is analysed~\cite{everitt92,fagerland17}.  Exact counting of contingency tables with specified marginals is known to be \#P-complete, even when $m$ or $n$ is equal to 2  (see Dyer, Kannan and Mount~\cite{dyer97}), and can be done only in special cases  (see e.g.\ Barvinok~\cite{barvinok94}). There are no known polynomial time algorithms for approximately uniform sampling or approximate counting of contingency tables when the marginals are arbitrary. When the number $m$ of rows  is constant, Cryan and Dyer~\cite{cryan03}, and Dyer~\cite{dyer03} gave algorithms for approximate counting and approximately uniform sampling where the runtime is polynomial in $n$ and $\log M$. The former is based on volume estimation and the latter uses dynamic programing. The first Markov chain for (approximately) uniformly sampling contingency tables, which had already been actively used by statisticians at that time, was analysed by Diaconis and Gangolli~\cite{diaconis95} and by Diaconis and Saloff-Coste~\cite{diaconis95walk}. Roughly speaking, the chain chooses a $2\times 2$ submatrix, and alters the entries in the submatrix by at most 1 subject to the marginal constraint.  For convenience we call this the Diaconis-Gangolli chain. Diaconis and Gangolli~\cite{diaconis95} proved that this Markov chain is ergodic and converges to the uniform distribution, without bounding the mixing time. In the case where both $n$ and $m$ are constant, Diaconis and  Saloff-Coste~\cite{diaconis95walk} proved that the mixing time is at most quadratic in $M$. Hernek~\cite{hernek98} studied the Diaconis-Gangolli  chain in the case $m=2$ and proved that the mixing time is bounded by a polynomial in $n$ and $\log M$. A different but related chain was studied in~\cite{chung96} and the mixing time is bounded by a polynomial in $n$, $m$, and $M$, if  the row and column sums are sufficiently large compared with $m$ and $n$. Using a different approach, Dyer, Kannan and Mount~\cite{dyer97} obtained the first fully  polynomial algorithm (polynomial in $n$, $m$ and $\log M$) for approximate counting and sampling of the contingency tables, provided that the row sums are at least $n^2m$ and the column sums are at least $m^2n$. Their condition on the row and column sums was slightly relaxed  by Morris~\cite{morris02}. Dyer and Greenhill~\cite{dyer00} considered a different Markov chain, which  randomly chooses a $2\times 2$ submatrix, and then replaces the submatrix by a uniformly random $2\times 2$ matrix subject to the marginal restrictions.  
	They studied the case where $m=2$ and proved that the mixing time is polynomial in $n$ and $\log M$. Later this result was extended to the case of arbitrary fixed $m$ by Cryan, Dyer, Goldberg, Jerrum and Martin~\cite{cryan06}, with mixing time bounded by a polynomial in $n$ and $\log M$. In a recent  PhD thesis,  Dittmer~\cite{dittmer19} reported some new results on approximate counting and sampling of random sparse contingency tables. In his work a new Markov chain (with nontrivial transitions) is introduced and the chain is rapidly mixing if $n$ and $m$ are of the same order, and the row and column sums are either of order up to $n^{1/4-\eps}$, or of equal order up to $n^{1-\eps}$. The runtime of each transition in the chain is polynomial and thus his algorithm yields a polynomial-time approximate sampler.
	
	Besides the aforementioned approximate samplers, Chen, Diaconis, Holmes and Liu~\cite{chen05} used  sequential importance sampling (SIS) to sample and count contingency tables. Their algorithm runs in polynomial time empiricially but they do not provide a theoretical  bound on the number of samples required to guarantee  any given error of approximation. Blanchet~\cite{blanchet09} proved that $O(M^3\eps^{-2}\delta^{-1})$ samples are sufficient for  the Chen-Diaconis-Holmes-Liu SIS method to guarantee an $\eps$-approximation with probability $1-\delta$, under the condition that all row sums are bounded, all column sums are $o(M^{1/2})$, and the sum of all column sums squared is $O(M)$. On the other hand, negative examples were provided by Bez{\'a}kov{\'a}, Sinclair, {\v{S}}tefankovi{\v{c}} and Vigoda~\cite{bezakova12} where an exponential number of samples are necessary.
	
	The problem simplifies if the contingency tables are restricted to binary (0/1) entries, and they are then equivalent to bipartite graphs, with the marginals specifying the degrees of the vertices. This has received considerable attention~\cite{tinhofer79,jerrum90,cooper07,greenhill14,rao96,verhelst08,blitzstein11,bollobas80,mckay90,gao17,gao18,arman19,steger99,kim03,bayati10,zhao13}.

	Our new algorithm, \MatrixGen, will be defined in Section~\ref{s:overview}. Throughout the paper we always assume	\[
	\sum_{i=1}^m s_i =\sum_{j=1}^n t_j.
	\]
	This condition is necessary as otherwise there is no contingency table with the prescribed marginals.
	Let
	\[
	M=\sum_{i=1}^m s_i, \quad \Delta=\max_{1\le i\le m, 1\le j\le n} \{s_i, t_j\}.
	\]
	To avoid another triviality we may also assume that $\bfs$ and $\bft$ have only positive components, since otherwise we may consider $(\bfs',\bft')$ obtained by deleting all 0 components from $\bfs$ and $\bft$. Thus we have the handy relation $M= \Omega(m+n)$. We say that $(\bfs, \bft)$ is \emph{bi-graphical} if there exists a simple bipartite graph whose two parts have degree sequences   $\bfs$ and $\bft$ respectively.  
	
	The main result is as follows.
	\begin{thm}\lab{thm:matrix}
		If $(\bfs, \bft)$ is bi-graphical then algorithm \MatrixGen\  generates a uniformly random $m\times n$  contingency table with marginals  $(\bfs,\bft)$.   \MatrixGen\ has expected time complexity $O(M)$ when  $5\Delta^4<M$, and has deterministic space complexity    $O(mn\log(\Delta+1))$ in all cases.
	\end{thm}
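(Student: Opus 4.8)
The plan is to prove the three assertions of the theorem separately: exact uniformity of the output, the $O(M)$ bound on the expected running time when $5\Delta^4<M$, and the $O(mn\log(\Delta+1))$ worst-case space bound. It is convenient throughout to identify a contingency table with the bipartite multigraph on vertex classes $[m]$ and $[n]$ in which the edge $ij$ has multiplicity equal to the $(i,j)$ entry; as is typical of the switching method, \MatrixGen\ produces such an object by first sampling from a distribution it can generate directly and then repeatedly performing local modifications that reshape the distribution towards the uniform one. The correctness argument and the complexity argument are independent, and the latter is the more delicate.

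For uniformity I would argue by induction on the number of elementary steps executed by \MatrixGen, maintaining the invariant that, conditioned on the entire execution history, the current partial object is uniformly distributed over the set of admissible partial objects that can still be completed to a contingency table with marginals $(\bfs,\bft)$. Each step has one of two types. In a \Relax\ step the algorithm commits one further unit of structure of the table, choosing among the admissible values with a probability calibrated so that, after the step, all still-completable outcomes remain equally likely; checking this reduces to verifying that the forward probabilities are inversely proportional to the relevant numbers of completions, the non-binary analogue of the incremental-relaxation computation of~\cite{arman19}. In a \Switch\ step a bounded local substructure --- the contingency-table counterpart of a multiple edge in the pairing model --- is rearranged, and one checks that, weighted by the switching probabilities, every target configuration receives the same total weight, so the invariant is preserved. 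When the instance has size below a fixed constant the table is produced outright by the brute-force subroutines \Brute\ and \sbrute, which are uniform by construction; this is the base case, and with the inductive step it yields that the final output is uniform over all contingency tables with marginals $(\bfs,\bft)$.

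One point needs care. Whenever a step would create a configuration admitting no valid completion, or one outside the class the analysis controls, \MatrixGen\ aborts and restarts from scratch; since a restart re-runs an identical randomised procedure, conditioning the output on eventual success leaves it uniform, provided success has positive probability from every reachable state and --- more importantly --- provided the \Switch\ operations the algorithm calls for are always available, so that its branching probabilities are well defined. This is exactly where the hypothesis that $(\bfs,\bft)$ is bi-graphical is used: it guarantees that the state spaces manipulated by the algorithm are nonempty and rich enough for the required switchings to exist at every stage, so that the procedure cannot become stuck and the analysis of the remaining subproblems goes through.

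For the running time, note first that a contingency table with marginals $(\bfs,\bft)$, together with the incidence data \MatrixGen\ maintains, has total size $O(M)$ (recall $M=\Omega(m+n)$), and that with the data structures of~\cite{arman19} each elementary operation --- sampling a random cell or endpoint, applying a \Switch, updating incidences --- costs $O(1)$ amortised, so one pass with no restart runs in time $O(M)$. Hence the expected time is $O(M)$ times the expected number of passes, and it remains to bound the latter. A restart is triggered only by a ``heavy'' local configuration --- for instance an entry of size at least three, the analogue of a triple edge --- and the expected number of such configurations produced in a single pass is of order $\Delta^4/M$; keeping track of the constant, the hypothesis $5\Delta^4<M$ makes this expectation, and hence (by Markov's inequality) the per-pass restart probability, bounded away from $1$, so the number of passes has expectation $O(1)$ and the expected running time is $O(M)$. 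For space, the dominant cost is storing the output explicitly as an $m\times n$ array with entries in $\{0,1,\dots,\Delta\}$, using $O(mn\log(\Delta+1))$ bits; the auxiliary incidence lists and counters fit within the same bound (the bi-graphical assumption, via $\Delta\le\max(m,n)$, is again convenient here), and since all scratch storage is reused across restarts the bound holds deterministically. I expect the main obstacle to be the running-time estimate rather than the uniformity proof: one must bound the expected number of heavy configurations in a pass by $O(\Delta^4/M)$ with a good enough constant --- summing over all the ways such a configuration can arise and bounding each term by an appropriate ratio of counts --- while simultaneously guaranteeing that each one can be located and repaired in $O(1)$ time rather than the $O(\Delta)$ or more that a naive switching analysis would incur, and it is precisely for the latter that the technique of~\cite{arman19} is indispensable.
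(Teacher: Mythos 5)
Your proposal has the right overall shape (uniformity, then time, then space) but it misses the actual structure of \MatrixGen\ and, with it, the decisive step of the uniformity argument. \MatrixGen\ is not a single switching process with a constant-size base case: it branches at the very start, calling \SimpleBiGen\ followed by \Gen\ with probability $1-\rho$ (a branch that can only output multigraphs of total multiplicity $S(\bfm)<\totmultcut$), and calling \Brute\ with probability $\rho$ (a branch that outputs only multigraphs of total multiplicity at least $\totmultcut$). \Brute\ is emphatically not a brute-force routine for ``instances of size below a fixed constant''; it is a superpolynomial-time recursive counting/sampling procedure for the heavy tail of the output distribution, invoked on the full instance with probability $\rho=M^{-\Omega(M)}$. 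Consequently the heart of the uniformity proof is not an induction on partial objects but the identity $\frac{1-\rho}{1+\beta_{\bf 0}}=\frac{\rho}{B}$: Theorem~\ref{thm:uniformgen} shows the \Gen\ branch outputs each light multigraph with probability $\frac{1-\rho}{(1+\beta_{\bf 0})|\H_{\bf 0}|}$, Theorem~\ref{thm:Brute} shows the \Brute\ branch outputs each heavy multigraph with probability $\frac{\rho}{B|\H_{\bf 0}|}$, and the definition of $\rho$ makes these two quantities equal. Your argument never matches the output densities across the two branches, and that is the one step that cannot be waved away. (Also, the bi-graphical hypothesis is used only to guarantee that \SimpleBiGen\ can be called, not to keep switchings available throughout.)

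The running-time argument also rests on the wrong mechanism. Restarts are not triggered by ``heavy'' configurations such as entries of size at least three --- \Gen\ deliberately creates multiple edges of arbitrary multiplicity; that is its purpose. Restarts are caused by the f-, b- and $\beta$-rejections, and what must be shown (Lemma~\ref{lemma:rej}) is that the probability that none of these occurs in a single run is bounded below by a constant; this in turn requires controlling the number of double edges created (Lemma~\ref{lemma:largem_0}) and the probability of ever entering a stratum with $\ell(\bfm)\ge 3$. You must also separately verify that the $M^{O(\Delta^2\log M)}$ running time of \Brute, weighted by its calling probability $\rho\le B=M^{-\Omega(M)}$, contributes $o(1)$ to the expectation; treating \Brute\ as a constant-time base case hides a real issue. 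The space bound and the appeal to incremental relaxation for fast per-step costs are fine as far as they go.
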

	\begin{remark}
	\begin{itemize}
		\item[(a)] The condition $5\Delta^4<M$ implies that $(\bfs,\bft)$ is bi-graphical by the Gale-Ryser theorem.
		
		\item[(b)] Since time complexity  is determined by parts of  the algorithm that only involve  numbers of size  $O(M)$, in evaluating  this complexity we assume arithmetic operations require  only $O(1)$ time. However, the space complexity involves storing much larger  numbers, so we evaluate the space required according  to the number of bits.
		
		\item[(c)] We are not aware of any fully polynomial approximate samplers in general. In \cites{diaconis95walk, hernek98, dyer00} $m$ is required to be fixed, whereas in~\cite{dittmer19} it is assumed that $m=\Theta(n)$. Our sampler covers some other ranges of $m$. It is an exact uniform sampler and runs much faster than those in~\cite{diaconis95walk, hernek98, dyer00,dittmer19}. By terminating the algorithm prematurely in case of some rare events, we can obtain a practical approximate sampler running in  linear expected time and in time $O(M^2\log M)$  always. The rare events that require termination are the occurrence of more than $M\log M$ restarts, or that the algorithm enters a very slow computation: procedure \Brute{} described in  Section~\ref{sec:Brute}. The probability of each of these rare events is $O(e^{-\Omega(M\log M)})$ so the output distribution of the approximate sampler differs from uniform by $O(e^{-\Omega(M\log M)})$ in total variation distance.
		\end{itemize}
	\end{remark}

	Contingency tables with specified marginals are in one-to-one correspondence to bipartite multigraphs with prescribed vertex degrees. We will  use the language of bipartite multigraphs instead of contingency tables as it is easier to describe the algorithm using graph theoretic terminology. Our approach is completely different from~\cite{cryan03,dyer03} and all the MCMC-based algorithms. Instead, we proceed along  the lines of~\cite{mckay90,gao17,gao18,arman19}, i.e.\ we design an exactly uniform sampler for  bipartite multigraphs with given degrees, using a switching method. The adjustments required for generating   multigraphs rather than graphs are far from straightforward, and we explain   why below.
	
	We first give a broad description of the switching method used in algorithms generating random graphs with given degrees. Initially,  a random multigraph is generated using the configuration model~\cite{bollobas80} introduced by Bollob\'{a}s:  represent each vertex as a bin containing a set of points whose number equals the degree of that vertex, then take a uniformly random perfect matching of the set of points to determine the edges. Given the multigraph, a sequence of switching operations are applied, each of which alters a few edges  but not the degrees of the vertices, such that eventually a simple graph is obtained. With a carefully designed rejection scheme, the output is uniformly random. For the bipartite case, the perfect matching is restricted so that only points in bins on different sides of the graph are matched.

	Generating random multigraphs has several difficulties not encountered in the generation of simple graphs. The configuration model generates a random multigraph. However, it is not distributed uniformly. The probability that a given multigraph occurs depends on how many multiple edges and loops of each multiplicity it contains. For instance, a multigraph that contains one multiple edge of multiplicity $m$ and no other multiple edges is $1/m!$ as likely to appear as any simple graph. A typical uniformly random multigraph would contain a large number of multiple edges if the degrees of the vertices are some power of $n$. Our algorithm starts from a uniformly random bipartite simple graph (by calling an existing linear-time algorithm), and then adds multiple edges using switchings. There are three main challenges here. 
	
	First we need to decide when this switching procedure  should stop adding multiple edges. There was no corresponding issue  for generation of simple graphs, since there the multiple edges are removed and the algorithm naturally stops when none remains. 
	
	The second challenge is in the design of a scheme for addition of multiple edges of high multiplicity. This was not a problem for generating simple graphs as with high probability the initial multigraph contains only multiple edges of low multiplicity. For instance, if the maximum degree is $o(n^{1/2})$ for a regular degree sequence on $n$ vertices,  then with high probability  there can only be simple loops, double edges and triple edges in the initial multigraph. Thus, high multiplicity edges were dealt with by simply rejecting the initial multigraph if it contains any.  However,  to  generate  random  multigraphs with exactly the uniform distribution, an algorithm must necessarily be capable of outputting every possible multigraph, including those with edges of very high multiplicity.  
	
	The third challenge is for the design of the rejection scheme. For generation of simple graphs, the rejection probabilities in each switching step are determined by computing the exact number of ways to perform a switching, or perform an inverse switching. This computation can be done efficiently --- easily in polynomial time --- if the switching operation only involves a small number of edges. This is the case for generating simple graphs. For multigraphs on the other hand,  occasionally  multiple edges with high multiplicity, possibly a power of $n$,   must be added. Following the ``standard'' procedure whereby the entire multiple edge is dealt with in one switching (which was the breakthrough in~\cite{mckay90} enabling super-logarithmic degrees to be treated) then leads to a super-polynomial time requirement for computing the exact number of ways a switching can be performed. In order to overcome this obstacle, we use a recently-developed rejection scheme~\cite{arman19} (by the same authors of this paper), which is different from~\cite{mckay90,gao17,gao18}, and can be implemented with small time cost. 
	In fact, it was contemplation of this very obstacle for multigraphs that evolved into the main new idea in~\cite{arman19}.

	With minor modifications, we also obtain a linear-time algorithm, \GraphGen, for generating random loopless multigraphs with a prescribed degree sequence. Roughly speaking, this uses similar switchings, but without being restricted by a vertex bipartition. The description of \GraphGen\ is given in Section~\ref{sec:multigraph}. We say that $\bfd$ is \emph{graphical} if there exists a simple  graph with degree sequence $\bfd$.
	\begin{thm}\lab{thm:multigraph}
		Assume $\bfd\in \bbN^n$ be such that $M=\sum d_i$ is even. Let $\Delta$ denote the maximum of the components in $\bfd$. If $\bf{d}$ is graphical, then algorithm \GraphGen\ uniformly generates a random loopless multigraph with degree sequence $\bfd$.  \GraphGen\ has expected time complexity $O(M)$ when  $5\Delta^4<M$, and has deterministic space complexity    $O(mn\log(\Delta+1))$ in all cases.
	\end{thm}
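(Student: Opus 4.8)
The plan is to treat \GraphGen\ as the ``single vertex class'' analogue of \MatrixGen\ and to carry over the proof of Theorem~\ref{thm:matrix}, isolating the few places where the move from arbitrary nonnegative integer matrices (bipartite multigraphs) to symmetric zero-diagonal ones (loopless multigraphs on one labelled vertex set) forces a genuinely new computation. First I would recall the structure of \GraphGen\ from Section~\ref{sec:multigraph}: it calls an existing exactly-uniform generator of simple graphs with degree sequence $\bfd$ (applicable for graphical $\bfd$, and running in linear time when $5\Delta^4<M$), and then performs a sequence of switching phases that successively create multi-edges, each switching equipped with an \cite{arman19}-style rejection, a rejection-controlled rule for deciding when to stop adding edges, and a rare fallback to the slow procedure \Brute{}.

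The correctness argument has the same three ingredients as for Theorem~\ref{thm:matrix}: (i) the initial distribution is exactly uniform on simple graphs with degree sequence $\bfd$; (ii) each switching preserves the invariant that, conditioned on the current multiset of multi-edge multiplicities (its ``type''), the current loopless multigraph is uniformly distributed over all loopless multigraphs of that type with degree sequence $\bfd$; and (iii) the stopping rule and the acceptance probabilities are calibrated so that the overall output is uniform over all loopless multigraphs with degree sequence $\bfd$. For (ii) one has to count, for a given switching, the number of ways to perform it and the number of ways to perform its inverse; these are essentially the same local quantities as in the bipartite case, but two features of the non-bipartite setting must be handled with care. A switching is specified by an orientation/ordering of the edges it touches, so symmetry factors of $2$ — and, when a multi-edge of multiplicity $\mu$ is involved, factors $\mu!$ — enter the counts; and the switchings must be defined so as never to create a loop, i.e.\ the configurations in which two selected endpoints coincide have to be forbidden in both the forward and the inverse directions. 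I would then check, as in the proof of Theorem~\ref{thm:matrix}, that with the \cite{arman19} rejection scheme all of these counts involve only numbers of size $O(M)$, so each switching costs $O(1)$ amortized time, and that the resulting acceptance ratios lie in $[0,1]$.

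For the time complexity I would use $5\Delta^4<M$ to bound the expected number of multi-edges of a uniformly random loopless multigraph with degree sequence $\bfd$ by $O(\Delta^2)=O(\sqrt M)$ and to control the typical multiplicities, so that the expected total work over the switching phases is $O(M)$; combined with the $O(M)$ cost of generating the initial simple graph, and with the bound $O(e^{-\Omega(M\log M)})$ on the probability of each rare slow event (entering \Brute{}, or exceeding $M\log M$ restarts; cf.\ the remark after Theorem~\ref{thm:matrix}), which makes their contribution to the expectation negligible, this yields expected running time $O(M)$. The deterministic space bound holds because the algorithm only needs to maintain the symmetric multiplicity matrix together with a constant number of auxiliary lists, i.e.\ $O(mn\log(\Delta+1))$ bits in all cases.

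I expect the main obstacle to be ingredient (ii): pushing the uniformity invariant through all the switching phases while correctly bookkeeping the symmetry factors that distinguish the non-bipartite configuration model from the bipartite one, and verifying that the \cite{arman19} rejection scheme still delivers valid acceptance ratios once loops are excluded by the definition of the switchings rather than being removed by them. The remaining pieces — correctness and speed of the initial simple-graph generator, the time and space accounting, and the negligibility of the rare slow branches — should transfer from the proof of Theorem~\ref{thm:matrix} essentially verbatim.
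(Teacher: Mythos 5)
Your proposal matches the paper's own treatment: Theorem~\ref{thm:multigraph} is established in Section~\ref{sec:multigraph} precisely by rerunning the proof of Theorem~\ref{thm:matrix} with redefined parameters ($\overline{f}_k(\bfm)=M_k^2$, $\underline{b}_k(\bfm,0)=2m_k$, adjusted $\totmultcut$, $\beta_{\bfm}$ and $\rho$) that absorb exactly the orientation factor of $2$ you identify, and with loop avoidance built into the (now bipartition-free) distinctness requirement in the definition of the $t$-switching. One small correction: no $\mu!$ symmetry factors arise, because the anchored-graph/incremental-relaxation formulation treats the whole multiplicity-$s$ edge as a single object counted by $b_s(G',V_0)=2m_s$ --- avoiding such factorials is precisely the purpose of that scheme --- and the per-switching cost is $O(\Delta^2)$ rather than $O(1)$ amortized, with the $O(M)$ total obtained as in Theorem~\ref{thm:GenTimeSpace}.
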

	
	Uniform generation of multigraphs permitting loops requires more work and we will address that problem in a subsequent paper.

	\section{Overview}
	 \lab{s:overview}

	Let $X$ and $Y$ be two sets of vertices with $|X|=m$ and $|Y|=n$. 
	Then  $(\bfs,\bft)$ is a {\em bipartite degree sequence} for $(X,Y)$ if $\bfs$ is $m$-dimensional,  $\bft$ is $n$-dimensional, and $\sum_{i=1}^m s_i=\sum_{j=1}^n t_j$.
	Recall that $\Delta$ denotes the maximum of all components of $\bfs$ and $\bft$, and define/recall
	\begin{align*}
	M&=\sum_{i\in X} s_i=\sum_{j\in Y} t_j; \\
	S_k&=\sum_{i\in X} (s_i)_k; \ \ T_k=\sum_{j\in Y} (t_j)_k \quad \mbox{for all $k\ge 2$,}
	\end{align*}
	where $(x)_k=\prod_{i=0}^{k-1}(x-i)$ denotes the $k$-th falling factorial.
	
	Let $\M(\bfs,\bft)$ denote the set of bipartite multigraphs with bipartition $(X,Y)$ and bipartite degree sequence $(\bfs,\bft)$.   
	We will show that following algorithm, \MatrixGen, is a uniform sampler for $\M(\bfs,\bft)$.
	
		\medskip
	\begin{algorithm}[H]
		{\bf{procedure}}
		{\MatrixGen}(\bfs,\bft)\\
		{\bf with probability} $1-\rho$ {\bf do}\\
		\quad G:=\SimpleBiGen(\bfs,\bft);\\
		\quad\Gen(G);\\
		{\bf else}  \\
		\quad \Brute(\bfs,\bft)\\
		{\bf end}
	\end{algorithm}
		\medskip
	 The parameter $\rho$ is a function of the input degree sequences $\bfs$ and $\bft$, and will be specified later. The choice of $\rho$ is based mainly on complexity issues, and for the usable range of parameters, it is very small. Thus,  \MatrixGen\  usually calls \SimpleBiGen~\cite{arman19}, which is a Las Vegas algorithm generating a random simple bipartite graph  with the given degree sequence, uniformly at random.
	\begin{thm}[\cite{arman19}] \lab{thm:simple}
		Assume $(\bfs,\bft)$ is a bipartite degree sequence, and $\Delta$ denotes the maximal degree. Algorithm \SimpleBiGen\ generates a uniformly random simple bipartite graph with degree sequence $(\bfs,\bft)$. Moreover, provided $\Delta^4=O(M)$, the expected runtime of \SimpleBiGen\ is $O(M)$  and its space complexity is always $O(mn)$. 
	\end{thm}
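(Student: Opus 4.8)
This theorem is imported from \cite{arman19}, and the method it provides is the engine driving \MatrixGen; I sketch the shape of the argument. The plan is to run the switching method inside the bipartite pairing (configuration) model and repair the non-uniformity of that model by a sequence of edge-switchings carrying a rejection scheme. First I would set up the initialization: give each $i\in X$ a bin of $s_i$ points and each $j\in Y$ a bin of $t_j$ points and take a uniformly random perfect matching between the $X$-points and the $Y$-points; reading bins as vertices gives a random bipartite multigraph, and a standard counting argument shows that every \emph{simple} bipartite graph with degree sequence $(\bfs,\bft)$ arises from exactly $\prod_{i}s_i!\,\prod_{j}t_j!$ pairings, hence uniformly among such graphs. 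Conditioning on simplicity would already be correct, but its probability is exponentially small once $\Delta$ grows, so the non-simple output is instead kept and transformed. I would fix a switching operation that takes a configuration with a repeated edge to one with strictly smaller total edge-multiplicity excess, touching only $O(1)$ points and fixing all degrees, so iterating until no repeated edge remains yields a simple graph; this sorts configurations into levels $\cdots\to\mathcal{C}_2\to\mathcal{C}_1\to\mathcal{C}_0$ with $\mathcal{C}_0$ the simple graphs. Since the numbers of forward switchings out of, and inverse switchings into, a configuration are not constant on a level, one accepts each switching with a probability assembled from the forward count at the source and the inverse count at the target (restarting on rejection, or when no valid switching exists), tuned so uniformity is preserved at each level; composing over levels — and using that $(\bfs,\bft)$ bi-graphical makes $\mathcal{C}_0$ nonempty — gives uniformity on the target set.

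The hard part, and the genuinely new ingredient, is getting expected \emph{linear} time rather than merely polynomial time. When a repeated edge of large multiplicity $\mu$ occurs, relocating it by one McKay--Wormald-style switching forces the inverse-switching count to be evaluated by an inclusion--exclusion over $\Theta(\mu)$ terms, which is too slow. The remedy, which is the contribution of \cite{arman19}, is the incremental-relaxation rejection scheme: the data defining a switching is built one unit at a time, with a tiny rejection performed at each micro-step using only $O(1)$ locally-computed quantities, the product of the micro-survival probabilities being engineered to equal the macro-ratio the level-wise scheme would have demanded. Proving that this product telescopes to exactly the required ratio, and that each micro-step runs in $O(1)$ time given the adjacency and multiplicity data stored in an $m\times n$ array, is the technical heart.

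Finally I would assemble the bounds. A first-moment estimate gives expected total multiplicity excess $O(S_2T_2/M^2)=O(\Delta^2)$ in the initial pairing (since $S_2,T_2\le(\Delta-1)M$), and a tail bound rules out much larger values, so $O(\Delta^2)$ switchings suffice, each costing $O(1)$ amortized by the previous paragraph, i.e.\ $O(M)$ per attempt. The acceptance probability is $1-O(\Delta^2/M)$ at each of the $O(\Delta^2)$ switchings, so the product of acceptance probabilities is $\exp(-O(\Delta^4/M))=\Omega(1)$ precisely in the regime $\Delta^4=O(M)$; hence the expected number of restarts is $O(1)$ and the total expected runtime is $O(M)$. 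For the space bound, a simple bipartite graph forces $s_i\le n$ and $t_j\le m$, hence $M\le mn$, so the edge list and the $m\times n$ adjacency array together occupy $O(mn)$.
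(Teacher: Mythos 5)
This statement is imported verbatim from \cite{arman19}; the paper under review gives no proof of it, so there is nothing internal to compare against. Your sketch is consistent with the paper's own description of that algorithm (configuration model, switchings removing multiple edges, f-/b-rejections, and the incremental-relaxation scheme that makes the b-rejection computable in $O(1)$ amortized time), and your accounting of the $O(\Delta^2)$ switchings and the $\exp(-O(\Delta^4/M))$ acceptance probability matches the regime $\Delta^4=O(M)$ in the statement; as you acknowledge, the genuinely hard step --- that the micro-rejection probabilities telescope to the exact macro-ratio --- is precisely what \cite{arman19} proves and is not reproduced here.
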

	After \SimpleBiGen\ produces a bipartite graph $G$, procedure \Gen\  employs a set of switching operations, described below,  to add  multiple edges to $G$, outputting a random multigraph for which the  total multiplicity of multiple edges does not exceed some pre-specified value $\totmultcut-1$, which is a function of the input degree sequence. On the other hand, \Brute\ uses recursion and a carefully designed data structure to generate the remaining multigraphs, i.e.\ those with   total multiplicity at least $\totmultcut$. Both \Gen\ and \Brute\ have the possibility of essentially failing, at which point they cause algorithm \MatrixGen\ to restart from scratch. This possibility, together with the use of \SimpleBiGen, is responsible for the Las Vegas nature of the algorithm \MatrixGen. Each restart has a positive probability of producing an output, so the algorithm terminates eventually.
	
\begin{remark}
		 If $\Delta=1$, then \MatrixGen\ should generate a random matching and can be implemented by just calling \SimpleBiGen\ once. For this case time and space complexity of \MatrixGen\ is same as of \SimpleBiGen\, and Theorem~\ref{thm:matrix} follows from Theorem~\ref{thm:simple}. A similar comment applies to Theorem~\ref{thm:multigraph}. For the rest of the paper we assume that $\Delta\geq 2$. 
\end{remark}
	
We present the description of the less important  procedure  \Brute\ in Section~\ref{sec:Brute}. Here we focus on the main component, procedure \Gen. A multiple edge of multiplicity $j$ is a set of $j$ edges sharing the same two end vertices. For every integer $t\ge 2$, the $t$-switching, defined  as follows, is used to add a multiple edge with multiplicity $t$. 
	\begin{definition}[$t$-switching]\lab{d:switching}
		Let $u_1, v_1, \ldots, u_{t+1},v_{t+1}$ be a set of $2(t+1)$  distinct vertices such that 
		\begin{itemize}
			\item $u_1\in X$, $v_1\in Y$, and for all $2\leq i \leq t+1$, $u_i\in Y$ and $v_i\in X$;
			\item $u_1 u_i$ and $v_1 v_i$ are single edges  for all $2\le i\le t+1$;
			\item for every $2\le i\le t+1$, $u_i$ is not adjacent to $v_i$.
		\end{itemize}
		The $t$-switching replaces edges $u_1 u_i$ and $v_1 v_i$, for each $2\le i\le t+1$, by $u_iv_i$ for each $2\le i\le t+1$ and a multiple edge with multiplicity $t$ between $u_1$ and $v_1$. 
	\end{definition}
	See Figure~\ref{f:2-way} for an illustration of a 2-switching. We note that the transition in the Diaconis-Gangolli Markov chain  modifies  the entries of a $2\times 2$ submatrix, whereas the $t$-switching, expressed in the matrix version, modifies the entries of a $(t+1)\times (t+1)$ submatrix.
	\begin{figure}[H]
		\begin{center}
			\includegraphics[trim={2.5cm 21cm 0cm 2.5cm},clip]{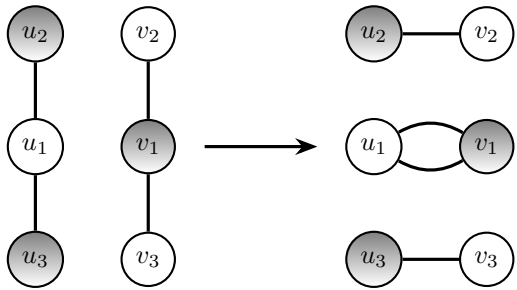}
			\caption{A 2-switching, vertices of $X$ are white, and those of $Y$ are shaded.}
			\label{f:2-way}
		\end{center}	
	\end{figure}
	
	Let $\bfm=(m_1,m_2,\ldots,m_{\Delta})$ be a vector of nonnegative integers. Let $\H_{\bfm}$ denote the set of bipartite multigraphs in $\M(\bfs,\bft)$ where the number of multiple edges of multiplicity $i$ equals $m_i$ for every $2\le i\le \Delta$. (We omit the notation $(\bfs,\bft)$ in $\H_{\bfm}$ since these vectors are fixed during the algorithm, whereas  $\bfm$ undergoes changes.) We set $m_1=0$ always as it is not needed. A multigraph in $\H(\bfm)$ has $\sum_{i=2}^{\Delta} m_i$ multiple edges in total, and has $S(\bfm):=\sum_{i=2}^{\Delta} i m_i$ edges that are contained in multiple edges. We say that $\bfm$ is the {\em stratum index} of the multigraphs in $\H_\bfm$.
	
	A set of parameters
	\bel{parameters}
	\totmultcut,\quad \rho,\quad \beta_{\bfm}, \quad \UB_k(\bfm),\quad \LB_k(\bfm),\quad \underline{b}_k(\bfm,i)
	\ee
	will be specified later. Here $\totmultcut$ is chosen to be much greater than the total number edges contained in multiple edges in a ``typical'' multigraph in $\M(\bfs,\bft)$, whereas $\rho>0$ is an upper bound on the probability that a uniformly random multigraph has more than $\totmultcut$ edges contained in multiple edges.
	
	The procedure   \Gen\ adds double edges, one at each step, and then adds triple edges, and so on, using the corresponding $t$-switchings.  Formalising this 
	description, we define a binary relation $\prec$ on $\bfm\in \bbN^{\Delta}$ as follows. 
	Given $\bfm\in \bbN^{\Delta}$,  let $\ell(\bfm)$ be the  greatest  index $k$ such that $m_k>0$. If $\bfm$ is the zero vector, set $\ell(\bfm)=2$.  We  write that $\bfm\prec \bfm'$ if $\bfm_i=\bfm'_i$ for all $i<\ell(\bfm)$ and $\bfm$ is smaller than $\bfm^\prime$ in lexicographic order.  Note that $\prec $ is not reflexive in our definition. 
	In each step,  \Gen\  either performs a switching and creates a multiple edge, or terminates by either outputting the current graph or performing a rejection.

	The action of \Gen\ can be described as a  Markov chain whose states  are multigraphs $G_0,G_1,\ldots$, where $G_i$ has $i$ multiple edges, together with two terminal states, signifying output and rejection. The random variable $\bfm(G_i)$  defines a random process on the vectors $\bfm \in \bbN^{\Delta}$. This process undergoes transitions governed by another Markov chain, called  the stratum index Markov chain, whose non-terminal states are stratum indices. The stratum index chain determines $\bfm(G_0),\bfm(G_1),\ldots$ until $\Gen$ terminates, which can occur  either by rejection during a switching operation, or when the stratum index chain itself terminates.   It is convenient to decompose each step of  \Gen\ into sub-steps at two `levels'. The first  sub-step, at the upper level, takes one step of the stratum index Markov chain, which moves either to a termination state (rejection, or output  the current multigraph $G_i$) or  to the next stratum index. The second sub-step, at the lower level, is invoked if non-termination occurred in the first step, and   either performs a switching  that converts the current multigraph $G_i$ to a multigraph $G_{i+1}$ with the new stratum index, or performs a rejection.
	
	We now explain the parameter $\beta_{\bfm}$. This  will be chosen as a rather tight upper bound on 
	\bel{Hmplus}
	\frac{|\H_{\bfm}^+|}{|\H_{\bfm}|}, \quad\mbox{where}\ \H_{\bfm}^+=\bigcup_{\scriptsize\begin{array}{c}\bfm': \bfm\prec \bfm' \\ S(\bfm')<\totmultcut\end{array}} \H_{\bfm'},
	\ee
	recalling that $S(\bfm')=\sum_{i=2}^{\Delta}im'_i$, which is the total number of edges contained in multiple edges in a multigraph from $\H_{\bfm'}$.
	The transition probabilities in the stratum index Markov chain are  determined using  $\beta_{\bfm}$. If $\beta_{\bfm}$ were defined exactly equal to the ratio $|\H_{\bfm}^+|/|\H_{\bfm}|$, then no rejection  state would be needed in this chain, because at each step a transition would be chosen with exactly the correct probability. Since this is not the case, a rejection scheme (which we call $\beta$-rej) is necessary.  
	There is a superpolynomial number of possible $\bfm$, and thus  computing all $\beta_{\bfm}$ would take superpolynomial time. Instead, the algorithm only pre-computes $\beta_{\bfm}$ where $\ell(\bfm)=2$ and $m_2<\totmultcut/2$. For all other $\bfm$, an explicit formula for $\beta_{\bfm}$ is used instead. 
 	
	Several types of rejections are used in the switching step.  These rejections are guided by parameters $\UB_k(\bfm)$ and  $\LB_k(\bfm), \underline{b}_k(\bfm,i)$ respectively. 
		
	In~\cite{mckay90,gao17,gao18}, the analogue of the stratum index Markov chain was a much simpler Markov chain that only had one possible stratum for output. The   difficulties encountered in the present work stem mainly from having to permit outputs from arbitrary strata, where the distribution of the final  stratum  is too difficult to compute. This necessitates innovative design of the stratum index Markov chain. The switching sub-step of \Gen\  is basically a straightforward extension of the switching step  in~\cite{mckay90,gao17,gao18} to handle arbitrarily high multiplicities. However, applying the rejection scheme from~\cite{mckay90,gao17,gao18}, or more precisely extending it to high multiplicity edges, would  yield superpolynomial computation time  due to rejection steps involving edges whose multiplicity can tend to infinity.  Instead we   use a fast technique called ``incremental relaxation'' recently developed in~\cite{arman19} to implement the switching step. This permits \Gen\ to run in linear time under the assumptions of Theorem~\ref{thm:matrix}.  
	
	\begin{remark}
		The assumption that $(\bfs, \bft)$ is graphical in Theorem~\ref{thm:matrix} is only needed to ensure that we can call \SimpleBiGen. 
		This assumption could be omitted if we modify the algorithm \MatrixGen\ as follows. If $(\bfs,\bft)$ is not graphical, one could use the configuration model to generate a bipartite multigraph with degrees $(\bfs, \bft)$. An easy but quite cumbersome alteration of our algorithm can be made, so that the algorithm probabilistically decides whether to add  multiple edges, as in \Gen\, or to remove them using a similar process.  By assuming graphicality, we avoid such complications in the description of the algorithm. This is without sacrificing the power of the main result, since  in order to obtain   expected linear run time for  \MatrixGen, in Theorem~\ref{thm:matrix} we made the assumption that $5\Delta^4<M$, which already implies graphicality.
	\end{remark}
		
	We give the full description of \Gen\ in Section~\ref{sec:algorithm}. The parameters appearing in~\eqn{parameters} will be specified and explained in Section~\ref{sec:parameters}. We prove that \MatrixGen\ is a uniform sampler  and estimate time and space complexity in Section~\ref{sec:GenUniTime}. In Section~\ref{sec:Brute} we provide a description of \Brute, prove that it is a uniform sampler and estimate its complexity. Finally, in Section~\ref{sec:proofmain}, we prove Theorem~\ref{thm:matrix}.

	
	\section{Procedure \Gen}
	\lab{sec:algorithm}
	Here we define \Gen. The input of \Gen\ is a graph $G$ chosen uniformly at random from all simple bipartite graphs with degree sequence $(\bfs,\bft)$. Each iteration of the  loop in  \Gen\ takes a step in the stratum index Markov chain, and then performs the required switching (or rejects and restarts).
	
	For a vector $\bfm$, recall that $S(\bfm)=\sum_{i=2}^\Delta im_i$. Throughout this paper we use  $\bfe_t$  to denote the   unit  vector 
	which has 0   everywhere except for a 1  in the $t$-th coordinate. The reader should notice that the variable $t$ in {\Gen} keeps track of the minimum possible multiplicity of the remaining edges to be added.
	
	\medskip
	\begin{algorithm}[H]
		{\bf{procedure}}
		{\Gen}(G)\\
		$\textbf{m}=(0,\ldots , 0)$\;
		$t=2$\;
		\While{$S(\textbf{m}) < \totmultcut$}{
			\textbf{output} $G$ with probability $\frac{1}{1+\beta_{\textbf{m}}}$\;
			choose $s\in\{t, \ldots, \Delta\}$ with probability $\displaystyle\frac{\UB_s{(\bfm)}}{\LB_s{(\bfm+\bfe_s)}}\cdot \frac{(1+\beta_{\textbf{m}+\bfe_{s}})}{  \beta_{\textbf{m}}}$; else {$\beta$-\textbf{reject}} \;
			Set $\textbf{m}^{\prime}=\textbf{m}+\bfe_{s}$\;
			Select a random $s$-switching $S$ that can be performed on $G\in \mathcal{H}_{\textbf{m}}$ to obtain $G^{\prime} \in \mathcal{H}_{\bfm'}$\;
			{f-\textbf{reject}} with probability $1-f_{s}(G)\slash\overline{f}_s(\bfm)$ \;
			{b-\textbf{reject}} with probability $1-\underline{b}_s(\bfm)\slash b_s(G^\prime,S)$ \;
			$G \gets G^{\prime}$\;
			$\textbf{m}\gets \textbf{m}^\prime$\;
			$t\gets s$;
		}
	\end{algorithm}
	\medskip
	Each of the three rejections causes a restart of \MatrixGen\ .
	For the following lemma, we assume the parameters in~\eqn{parameters} are specified as in Section~\ref{sec:parameters}. The lemma ensures that \Gen\ is well defined in the sense that all probabilities called for are between 0 and 1. Its  proof can be found in Appendix A1.
	\begin{lemma}\label{lem:probability} For all $\textbf{m}$ with $S(\bfm)< \totmultcut$ and $\ell(\textbf{m})\geq 2$
		$$\sum _{s=2}^{\Delta}\frac{\UB_s{(\bfm)}}{\LB_s{(\bfm+\bfe_s)}}\cdot \frac{1+\beta_{\bfm+\bfe_{s}}}{\beta_{\textbf{m}}}\leq 1.$$ 
	\end{lemma}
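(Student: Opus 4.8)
The plan is to show the inequality by re-summing the claimed transition probabilities so that the $\beta$ parameters telescope against their defining ratios. The key is that $\beta_{\bfm}$ is (an upper bound on) $|\H_{\bfm}^+|/|\H_{\bfm}|$, and $\H_{\bfm}^+$ decomposes as a disjoint union over the strata $\bfm'$ with $\bfm\prec\bfm'$ and $S(\bfm')<\totmultcut$. The summand for a given $s$ involves the factor $\UB_s(\bfm)/\LB_s(\bfm+\bfe_s)$, which in turn will be controlled by the switching counts: $\UB_s(\bfm)$ over-counts the number of $s$-switchings applicable to a graph in $\H_{\bfm}$, and $\LB_s(\bfm+\bfe_s)$ under-counts the number of inverse $s$-switchings producing a graph in $\H_{\bfm+\bfe_s}$ from $\H_{\bfm}$. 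Hence $\UB_s(\bfm)\,|\H_{\bfm}| \le (\text{\# switching pairs}) \le \LB_s(\bfm+\bfe_s)\,|\H_{\bfm+\bfe_s}|$ — or more precisely the reverse direction of whichever inequality is needed — so that $\UB_s(\bfm)/\LB_s(\bfm+\bfe_s) \le |\H_{\bfm+\bfe_s}|/|\H_{\bfm}|$. (The exact direction and the exact form of $\UB_s,\LB_s$ are fixed in Section~\ref{sec:parameters}, so I would quote those bounds.)

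Given that bound, I would estimate
\[
\sum_{s=2}^{\Delta}\frac{\UB_s(\bfm)}{\LB_s(\bfm+\bfe_s)}\cdot\frac{1+\beta_{\bfm+\bfe_s}}{\beta_{\bfm}}
\;\le\; \frac{1}{\beta_{\bfm}}\sum_{s=2}^{\Delta}\frac{|\H_{\bfm+\bfe_s}|}{|\H_{\bfm}|}\bigl(1+\beta_{\bfm+\bfe_s}\bigr).
\]
Now $(1+\beta_{\bfm+\bfe_s})\,|\H_{\bfm+\bfe_s}| = |\H_{\bfm+\bfe_s}| + |\H_{\bfm+\bfe_s}|\,\beta_{\bfm+\bfe_s} \le |\H_{\bfm+\bfe_s}| + |\H_{\bfm+\bfe_s}^+|$, and because $\bfm\prec\bfm+\bfe_s$ and $\H_{\bfm+\bfe_s}^+$ consists of strata $\bfm''$ with $\bfm+\bfe_s\prec\bfm''$, the sets $\H_{\bfm+\bfe_s}$ and $\H_{\bfm+\bfe_s}^+$ over the various admissible $s$ partition exactly $\H_{\bfm}^+$. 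That is, $\bigcup_{s}\bigl(\H_{\bfm+\bfe_s}\cup\H_{\bfm+\bfe_s}^+\bigr) = \H_{\bfm}^+$, the union being disjoint. Summing therefore gives $\sum_s (1+\beta_{\bfm+\bfe_s})|\H_{\bfm+\bfe_s}| \le |\H_{\bfm}^+| \le \beta_{\bfm}\,|\H_{\bfm}|$, and dividing by $\beta_{\bfm}|\H_{\bfm}|$ yields the claimed bound $\le 1$.

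The main obstacle I anticipate is verifying that the strata $\{\bfm+\bfe_s : 2\le s\le\Delta\}$ together with their respective ``$+$''-closures really do tile $\H_{\bfm}^+$ exactly once each — i.e.\ that the relation $\prec$ and the index $\ell(\bfm)$ are set up so that every $\bfm'$ with $\bfm\prec\bfm'$ and $S(\bfm')<\totmultcut$ lies in exactly one of these pieces. This is a bookkeeping argument about the lexicographic-type order $\prec$: one must check that $\bfm\prec\bfm'$ forces $\bfm'$ to agree with $\bfm$ on coordinates below $\ell(\bfm)$ and to exceed it somewhere at or above $\ell(\bfm)$, so that $\bfm'$ is obtained from some unique $\bfm+\bfe_s$ (with $s\ge t=\ell(\bfm)$, matching the range in the algorithm) by a further $\prec$-step or equality. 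A secondary point is that the explicit formula used for $\beta_{\bfm}$ when $\ell(\bfm)\ne 2$ or $m_2\ge\totmultcut/2$ must still dominate $|\H_{\bfm}^+|/|\H_{\bfm}|$; I would invoke the bound proved for it in Section~\ref{sec:parameters} rather than re-derive it. Everything else is the routine telescoping above, and the boundary cases (the zero vector, $\ell(\bfm)=\Delta$ so the sum has one term, or $S(\bfm)$ close to $\totmultcut$ so several $\bfm+\bfe_s$ are excluded) only make the left-hand side smaller and so are harmless.
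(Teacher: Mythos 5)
Your overall strategy cannot work, because both of the key inequalities you rely on point the wrong way. First, double counting the $s$-switchings between $\H_{\bfm}$ and $\H_{\bfm+\bfe_s}$ gives $|\H_{\bfm+\bfe_s}|\,\LB_s(\bfm+\bfe_s)\le |\H_{\bfm}|\,\UB_s(\bfm)$, since $\UB_s$ \emph{over}-counts forward switchings from each $G\in\H_{\bfm}$ while $\LB_s$ \emph{under}-counts inverse switchings into each $G'\in\H_{\bfm+\bfe_s}$; hence $\UB_s(\bfm)/\LB_s(\bfm+\bfe_s)\ \ge\ |\H_{\bfm+\bfe_s}|/|\H_{\bfm}|$, the reverse of what you assert (this is exactly how the paper derives~\eqn{ineq:H_i}). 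Second, Lemma~\ref{lem:beta} says $\beta_{\bfm'}\ge |\H^+_{\bfm'}|/|\H_{\bfm'}|$, so $(1+\beta_{\bfm+\bfe_s})\,|\H_{\bfm+\bfe_s}|\ \ge\ |\H_{\bfm+\bfe_s}|+|\H^+_{\bfm+\bfe_s}|$, again the reverse of the step you use. Your tiling of $\H^+_{\bfm}$ by the sets $\H_{\bfm+\bfe_s}\cup\H^+_{\bfm+\bfe_s}$ is correct, but combined with the true directions it yields only the \emph{lower} bound $\sum_s \frac{\UB_s(\bfm)}{\LB_s(\bfm+\bfe_s)}(1+\beta_{\bfm+\bfe_s})\ge |\H^+_{\bfm}|/|\H_{\bfm}|$ --- which is precisely the inductive step in the paper's proof of Lemma~\ref{lem:beta} and says nothing about the sum being at most $\beta_{\bfm}$. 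The obstruction is structural: writing $\beta^+_{\bfm}$ for the sum in question, both $\beta^+_{\bfm}$ and $\beta_{\bfm}$ are upper bounds for $|\H^+_{\bfm}|/|\H_{\bfm}|$, and no comparison of either with the true ratio can decide which of the two upper bounds is the larger.

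The comparison has to be made on the explicit formulas, and that is what the paper does. When $\ell(\bfm)=2$, $\beta_{\bfm}$ is \emph{defined} to equal $\beta^+_{\bfm}$, so the sum is exactly $1$ and there is nothing to prove. When $\ell(\bfm)=\ell\ge 3$, one substitutes $\beta_{\bfm}=4\Delta^{2\ell-2}/(\eps^{\ell}M^{\ell-2})$ and bounds $\beta^+_{\bfm}$ term by term using~\eqn{ineq:fb} together with $S_iT_i\le \Delta^{2i-2}M^2$; the resulting geometrically decaying sum is at most $3\Delta^{2\ell-2}/(\eps^{\ell}M^{\ell-2})\le\beta_{\bfm}$, using $4\Delta^4<\eps^3M$. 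No set sizes enter this calculation at all, so quoting Lemma~\ref{lem:beta} (as you propose in your closing paragraph) does not substitute for it.
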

	
	Roughly, in each iteration of \Gen\, a new stratum indexed by $\bfm'=\bfm+\bfe_s$ is chosen and then \Gen\ randomly selects an $s$-switching $S$ which converts $G$ to some graph  $G'\in\mathcal{H}_{\bfm+\bfe_s}$. After this, based  on what are essentially subgraph counts in $G'$, \Gen\ computes two probabilities, $p_1$ and $p_2$. It performs an f-rejection with probability $p_1$ and then a b-rejection with probability $p_2$. If any rejection occurs then the whole algorithm \MatrixGen\ restarts. Otherwise, \Gen\ returns $G'$. 
	
	We first explain  f-rejection. Given $s$ and $G$, let $f_s(G)$ be the number of $s$-switchings that can be performed on $G$ and let $\UB_s(\bfm)$ be a uniform upper bound on $f_s(G)$ for $G\in \mathcal{H}_{\bfm}$.  Let $p_1=1-f_s(G)\slash \UB_s(\bfm)$, i.e.\   f-rejection is performed  with probability $1-f_s(G)\slash \UB_s(\bfm)$.  By choosing an appropriate $\UB_s(\bfm)$, we can implement f-rejection in a way that avoids computation of $f_s(G)$. See Section~\ref{sec:GenUniTime} for details.    
	
	Next we explain the more complicated b-rejection. 
	The scheme that does the job of b-rejection in~\cite{mckay90},~\cite{gao17} and~\cite{gao18} requires computing the number of $s$-switchings that can produce $G'$, which can take long time if $s$ is large. We use instead the new scheme in~\cite{arman19}, which takes advantage of the fact that  $G'$ is generated from the random switching uniformly at random subject to a certain set of constraints that are derived from the switching. Computing the number of $s$-switchings that produce $G'$ is equivalent to counting the number of ways to place the appropriate set of constraints on $G'$ simultaneously. Instead of doing so, the new b-rejection scheme takes $G'$ and the set of constraints at the input, and computes the number of ways to relax just one of the constraints, uses this to obtain a uniformly random multigraph subject to one less constraint, and  then iterates. This technique is called {\em incremental relaxation}, and is described in~\cite{arman19} in a more general setting. 	 	
	For an  $s$-switching $S$ from    $G \in \mathcal{H}_{\bfm}$ to $G' \in \mathcal{H}_{\bfm+\bfe_s}$, we write $G=\inS$ and $G'=\outS$.
	
	Such a switching is equivalent to an \emph{anchored} graph $(G',V(S))$, where $V(S)=(u_1,v_1, \ldots, u_{s+1},v_{s+1})$ is an ordered subset of vertices of $G'$, with $u_1\in X$ and $v_1\in Y$, $u_i\in Y$ and $v_i\in X$ for all $i \geq 2$, that also satisfies the following set of adjacency constraints:
	
	\begin{itemize}
		\item $u_1v_1$ is an edge of multiplicity $s$,
		
		\item $u_iv_i$ are single edges for $i\in[2,s+1]$ 
		
		\item there are no edges between $u_1$ and $v_i$ or $v_1$ and $u_i$ for $i\in[2,s+1]$.
	\end{itemize}
	The equivalence is obtained by noting that an application of an $s$-switching as in Definition~\ref{d:switching}, paying attention to the names of the vertices, determines an anchored graph as above, and vice versa.  
	
	For each $s$-switching $S$ and $1\leq i\leq  s+1$, let $V_{i}(S)=(u_1,v_1, \ldots, u_{i},v_{i})$ and let $V_0(S)=\emptyset$.
	Given a multigraph $H'$, and given $1\leq i \leq s+1$ and  $V'_i=(u'_1,v'_1 , \ldots, u'_{i}, v'_{i})\subseteq V(H')$, we say that $V'_i$ is a valid $i$-subset in $H'$ (with respect to $s$-switchings) if there exists an $s$-switching $S'$ which converts some multigraph to $H'$, and for which $V_i(S')=V'_i$ (e.g.\ consider $V'_1=(u_1, v_1)$ if $H'=G'$).
	For $1\leq i \leq s$, a valid $i$-subset  $V'_i=(u'_1,v'_1 , \ldots, u'_{i}, v'_{i})$ in $H'$ and a simple ordered edge $(u,v)$ in $H'$, we say that $(u,v)$ is \emph{switching compatible} with $V'_i$ if there exists an $s$-switching $S'$ which converts some multigraph to $H'$, such that $V_{i+1}(S')$ equals $V'_i+(u,v):=(u'_1,v'_1 , \ldots, u'_{i}, v'_{i},u,v)$. 
	
	Define $b_{s}(G^\prime, V_0)$ to be the number of valid $1$-subsets in $G'$,  which we will prove is equal to the number of multiple edges of multiplicity $s$ in $G'$.  (See Lemma~\ref{lem:sub-b-bound} below).  For $1 \leq i \leq s$ and a valid $i$-subset $V_i$ in $G'$, define $b_s(G',V_i)$ to be the number of simple ordered edges that are switching compatible with $V_i$ in $G'$. 
	
	The parameters $\underline{b}_{s}(\bfm, i)$ for $0\leq i \leq s$, which are to be specified in Section~\ref{sec:parameters}, are chosen to be  uniform lower bounds for the respective $b_s(G',V_i(S))$, over all $G'\in \H_{\bfm}$ and all valid $s$-switchings $S$ that produce $G'$.   
	We also set
	\be
	\underline{b}_{s}(\bfm)=\prod_{i=0}^{{s}}\underline{b}_s(\bfm,i), \lab{def_b}
	\ee
	which obviously is a uniform lower bound for 
	\[
	b_s(G',S)=\prod_{i=0}^{s} b_s(G',V_i(S)).
	\]
	The b-rejection scheme computes each $b_s(G',V_i(S))$ and sets $p_2=1-\underline{b}_{s}(\bfm)/b_s(G',S)$, and performs a b-rejection with probability $1-\underline{b}_{s}(\bfm)/b_s(G',S)$. Computation of $b_s(G',V_i(S))$ can be done rapidly;  see details in Section~\ref{sec:GenUniTime}. 	
	
	Without introducing all the general terminology of~\cite{arman19}, we remark for those familiar with~\cite{arman19} that the constraints $C_i$ in~\cite[Section 3]{arman19} correspond to the constraints associated with the edges joining vertices of $V_i(S)$. Seen in this way,~\cite[Corollary 6]{arman19} implies that 
	if $G$ is uniformly random in $\mathcal{H}_{\textbf{m}}$ then the switching creates  $G^{\prime}$   uniformly random in $\mathcal{H}_{\textbf{m}+\bfe_s}$. However, we do not rely on this fact directly in the present paper, and instead  derive it in a more precise form in Section~\ref{sec:GenUniTime}.

	\def\ms{\medskip}
	\section{Parameter set-up}
	\lab{sec:parameters}
	Here we define the parameters involved in the algorithm.  Recall that $(\bfs,\bft)$ is a bipartite degree sequence that satisfies $\Delta\geq 2$ and  $5\Delta^4< M$.
	
	 The proofs of Lemmas~\ref{lem:sub-b-bound}--~\ref{lemma:tail} that are stated in this section are based on straightforward inclusion-exclusion arguments and calculus. The proofs are presented in Appendix A1. \ms 
	
	\noindent{\em Choice of $\totmultcut$}. 
	\ss
	
	If  possible,  choose integer $\totmultcut>7$ such that for $\eps=1-(\totmultcut+4\Delta^2)/M$,  we have $\epsilon=\Omega(1)$ and
	$$
	\eps^{3}> 4\Delta^4/M \qquad \mbox{and}\qquad 2\totmultcut(\totmultcut -\Delta^2-\Delta^3)\geq   (\totmultcut+4\Delta^2)^2.
	$$ 
	Otherwise set $\totmultcut=0$.

	Note that $\totmultcut\ne0$ provided $M$ is large enough, and also that if $\Delta$ is bounded we can choose $\totmultcut\sim 3M/4$  so that $\eps\sim 1/4$. 
	\ms
	
	\noindent{\em Choice of $\underline{b}_k(\bfm,i)$, $\underline{b}_k(\bfm)$ and $\overline{f}_k(\bf m)$.}\ss
	
	For $k\geq 3$ and  $1\leq i\leq k$ define 
	$$ \underline{b}_{k}(\bfm,i)=\epsilon M, \qquad 	\underline{b}_{k}(\bfm,0)=m_k. $$
	For $1\leq i\leq 2$ define 
	\begin{align*}
	\underline{b}_{2}(\bfm,i)&=M\left(1-\frac{S(\bfm)+2i\Delta+2\Delta^2}{M}\right),\quad 
	\underline{b}_{2}(\bfm,0)=m_2.
	\end{align*}
	
	Finally, 
	\begin{align*}
	\underline{b}_k(\bfm)&=\prod_{i=0}^{k}\underline{b}_k(\bfm,i)=m_k\prod_{i=1}^{k}\underline{b}_k(\bfm,i),\quad
	\overline{f}_{k}(\bfm)=S_kT_k.
	\end{align*}
	Recall that for $G\in \mathcal{H}_{\textbf{m}}$, the value of $f_k(G)$ is the number of possible $k$-switchings $S$ that can be performed on $G$. Next we verify that the parameters specified above are uniform lower and upper bounds for $b_k(G,S)$ and $f_k(G)$ respectively. 
	
	\begin{lemma}\lab{lem:sub-b-bound}
		Assume that $G\in \mathcal{H}_{\bfm}$ is a multigraph with $S(\bfm)\leq \totmultcut$. Let  $S$ 
		be a $k$-switching that produces $G$. Then, for all $1 \leq i\leq k$,
		$$\underline{b}_k(\bfm,i)\leq b_k(G,V_i(S)) \leq M,$$ 
		and for  $i=0$ we have $b_k(G,V_0(S))=\LB_k(\bfm,0)$.\end{lemma}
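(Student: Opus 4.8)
The plan is to reduce all three bounds to incidence counting in $G$, via the stated equivalence between $k$-switchings producing $G$ and anchored graphs $(G,V(S))$ obeying the adjacency constraints; in the bipartite setting the constraints on the first $j$ pairs reduce to: $u_1v_1$ is an edge of multiplicity exactly $k$, each $u_pv_p$ ($2\le p\le j$) is a single edge, $u_p\notin N(u_1)$ and $v_p\notin N(v_1)$ for $2\le p\le j$, and the $2j$ vertices are distinct. The key structural ingredient is a \emph{greedy completion} fact: any tuple $(u_1,v_1,\dots,u_j,v_j)$ satisfying these constraints for some $1\le j\le k$ extends to a full $k$-switching producing $G$. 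Indeed, to append the $p$-th pair one needs a single edge of $G$ whose $Y$-endpoint avoids $N(u_1)$ and the $\le p-1$ previously chosen $Y$-vertices, and whose $X$-endpoint avoids $N(v_1)$ and the previously chosen $X$-vertices; since $|N(u_1)|,|N(v_1)|\le\Delta$ and $p-1\le k\le\Delta$, at most $4\Delta^2$ of the $M-S(\bfm)$ single edges of $G$ are blocked, and $M-S(\bfm)-4\Delta^2\ge M-\totmultcut-4\Delta^2=\eps M>0$ by the choice of $\totmultcut$, so a valid choice always exists. (Here $V_i(S)$ itself satisfies the constraints, being induced by the genuine switching $S$.)

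For $i=0$: a valid $1$-subset is an ordered pair $(u_1,v_1)$ with $u_1\in X$, $v_1\in Y$ that equals $V_1(S')$ for some $k$-switching $S'$ producing $G$. The constraints force $u_1v_1$ to have multiplicity exactly $k$, and conversely every multiplicity-$k$ edge of $G$, oriented as the bipartition dictates, is such a pair by greedy completion. Distinct multiplicity-$k$ edges yield distinct ordered pairs, so valid $1$-subsets biject with the multiplicity-$k$ edges of $G$ and $b_k(G,V_0(S))=m_k=\LB_k(\bfm,0)$.

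For $1\le i\le k$: the quantity $b_k(G,V_i(S))$ counts the ordered single edges $(u,v)$, $u\in Y$, $v\in X$, that are switching-compatible with $V_i(S)$. Each is a single edge of $G$ with a prescribed orientation, so $b_k(G,V_i(S))\le M-S(\bfm)\le M$, which is the upper bound. For the lower bound, greedy completion shows $(u,v)$ is switching-compatible with $V_i(S)$ exactly when $uv$ is a single edge, $u,v\notin V_i(S)$, $u\notin N(u_1)$, and $v\notin N(v_1)$. Removing from the $M-S(\bfm)$ single edges those incident, on either side, to one of the $\le|N(u_1)|+i\le\Delta+i$ forbidden $Y$-vertices or $\le\Delta+i$ forbidden $X$-vertices,
\[
b_k(G,V_i(S))\ \ge\ \big(M-S(\bfm)\big)-2(\Delta+i)\Delta\ =\ M-S(\bfm)-2\Delta^2-2i\Delta .
\]
For $k=2$ this equals $\LB_2(\bfm,i)$ exactly. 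For $k\ge3$, using $S(\bfm)\le\totmultcut$ and $i\le k\le\Delta$, the right-hand side is at least $M-\totmultcut-4\Delta^2=\eps M=\LB_k(\bfm,i)$.

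The main obstacle is the greedy completion fact, that is, upgrading the evidently \emph{necessary} local conditions on the newly added pair (or, for $i=0$, on a multiplicity-$k$ edge) to conditions that are also \emph{sufficient}; this is precisely where the hypothesis $5\Delta^4<M$, through the choice of $\totmultcut$ making $\eps=\Omega(1)>0$, is needed. The remainder is a routine inclusion-exclusion over vertex incidences, with the cases $k=2$ and $k\ge3$ accounting for the two formulas for $\LB_k(\bfm,i)$.
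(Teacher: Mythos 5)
Your proposal is correct and follows essentially the same route as the paper: the paper's Claim 15 is exactly your ``greedy completion'' fact (proved there by reverse induction on $i$ rather than a forward greedy argument, but with the same edge-availability count $M-\totmultcut-O(\Delta^2)\ge\eps M>0$), and the remaining counting of switching-compatible ordered single edges by excluding those incident to $V_i(S)$ or to $N(u_1)\cup N(v_1)$ matches the paper's inclusion-exclusion, including the identification $b_k(G,V_0(S))=m_k$. No gaps.
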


	\begin{lemma}\label{lemma:tbounds}
		Let $G\in \mathcal{H}_{\textbf{m}}$ be such that $S(\textbf{m})\leq \totmultcut$ and  let 
		$S$ be a $k$-switching that produces $G$. Then
		$$f_k (G) \leq \overline{f}_k(\bfm) \;\;\; \text{and} \;\;\;  \underline{b}_{k}(\bfm) \leq b_k(G,S) \leq m_kM^k.$$
		Moreover, for  $k=2$ we also have 
		$$f_{2}(G)\geq S_2T_2\left( 1- \frac{\Delta(S_2+T_2)(2S(\bfm)+2\Delta+1.5\Delta^2)}{S_2T_2}\right).$$
	\end{lemma}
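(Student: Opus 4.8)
The plan is to dispatch the three assertions in increasing order of difficulty. The bounds on $b_k(G,S)$ are immediate from Lemma~\ref{lem:sub-b-bound}: since $b_k(G,S)=\prod_{i=0}^{k}b_k(G,V_i(S))$ and that lemma supplies $b_k(G,V_0(S))=\underline b_k(\bfm,0)=m_k$ together with $\underline b_k(\bfm,i)\le b_k(G,V_i(S))\le M$ for $1\le i\le k$, multiplying the $k+1$ factors gives exactly $\underline b_k(\bfm)=m_k\prod_{i=1}^{k}\underline b_k(\bfm,i)\le b_k(G,S)\le m_kM^k$. For the upper bound $f_k(G)\le\overline f_k(\bfm)=S_kT_k$, note that a $k$-switching is determined by its ordered vertex tuple, which I split into a ``$u$-star'' (a choice of $u_1\in X$ together with an ordered $k$-tuple of distinct single-edge neighbours of $u_1$) and a symmetric ``$v$-star'' based at $v_1\in Y$; since $u_1$ has at most $s_{u_1}$ single-edge neighbours there are at most $\sum_{i\in X}(s_i)_k=S_k$ many $u$-stars and at most $T_k$ many $v$-stars, and every valid switching injects into the set of star-pairs, so $f_k(G)\le S_kT_k$.

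The substance is the lower bound on $f_2(G)$, which I would obtain by inclusion--exclusion starting from the number $N_0$ of all pairs of a $2$-shaped $u$-star and a $2$-shaped $v$-star, i.e.\ before imposing the remaining constraints of a valid $2$-switching. These remaining constraints are: the six vertices are distinct (equivalently, beyond $u_2\ne u_3$ and $v_2\ne v_3$, which are built into the stars, the four exclusions $u_2\ne v_1$, $u_3\ne v_1$, $v_2\ne u_1$, $v_3\ne u_1$), and the three non-adjacency conditions $u_1\not\sim v_1$, $u_2\not\sim v_2$, $u_3\not\sim v_3$ (the first being forced by the requirement that the output lie in $\H_{\bfm+\bfe_2}$). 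For the lower bound on $N_0$, let $r_w$ be the number of edge-slots at $w$ lying inside multiple edges, so that $w$ has $s_w-r_w$ resp.\ $t_w-r_w$ single-edge neighbours and $\sum_{w\in X}r_w=\sum_{w\in Y}r_w=S(\bfm)$. Using $(s_{u_1}-r_{u_1})(s_{u_1}-r_{u_1}-1)\ge s_{u_1}(s_{u_1}-1)-2s_{u_1}r_{u_1}$, summing over $X$ and bounding $s_{u_1}\le\Delta$ shows the number of $u$-stars is at least $S_2-2\Delta S(\bfm)$, and symmetrically for $v$-stars, so $N_0\ge S_2T_2-2\Delta S(\bfm)(S_2+T_2)$ after dropping the nonnegative cross term.

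Next I would subtract, one family at a time, the configurations violating the seven constraints. For a coincidence, say $u_2=v_1$: fixing the $u$-star ($\le S_2$ ways) fixes $v_1=u_2$, and then $(v_2,v_3)$ is an ordered pair of distinct single-edge neighbours of $v_1$, of which there are at most $\Delta(\Delta-1)<\Delta^2$; the four coincidence families together cost at most $2\Delta^2(S_2+T_2)$. For an adjacency violation, say $u_2\sim v_2$: build the offending configuration from whichever side is cheaper --- fixing a $v$-star ($\le T_2$ ways), then $u_2$ among the $\le\Delta$ neighbours of $v_2$, then $u_1$ among the $\le\Delta$ single-edge neighbours of $u_2$, then $u_3$ among the $\le\Delta$ single-edge neighbours of $u_1$ --- giving at most $\Delta^3\min(S_2,T_2)\le\frac12\Delta^3(S_2+T_2)$, so the three adjacency families cost at most $1.5\Delta^3(S_2+T_2)$. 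Since these families are sets (possibly overlapping) we are over-subtracting, so $f_2(G)\ge N_0-2\Delta^2(S_2+T_2)-1.5\Delta^3(S_2+T_2)\ge S_2T_2-\Delta(S_2+T_2)\big(2S(\bfm)+2\Delta+1.5\Delta^2\big)$, which is the claimed inequality. If $S_2-2\Delta S(\bfm)<0$ or $T_2-2\Delta S(\bfm)<0$, the asserted lower bound is already $\le 0$ and there is nothing to prove, so one may assume $N_0\ge(S_2-2\Delta S(\bfm))(T_2-2\Delta S(\bfm))$ in the estimate above.

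The one place I expect resistance is keeping this inclusion--exclusion honest: listing exactly which coincidences among the six vertices and which forbidden adjacencies can arise, charging each configuration to exactly one family (over-counting being harmless for a lower bound), and --- because the target constant $1.5\Delta^2$ is matched on the nose rather than up to a constant factor --- arranging each per-family estimate so that the stray $\pm1$'s in the degree counts land on the favourable side.
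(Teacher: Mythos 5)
Your proof is correct and follows essentially the same route as the paper's: the $b_k$ bounds come from multiplying the factor-wise bounds of Lemma~\ref{lem:sub-b-bound}, the upper bound $f_k(G)\le S_kT_k$ comes from counting ordered star pairs, and the $f_2$ lower bound subtracts exactly the same three families (vertex coincidences $\le 2\Delta^2(S_2+T_2)$, forbidden adjacencies $\le 1.5\Delta^3(S_2+T_2)$, and multiple-edge slots $\le 2\Delta S(\bfm)(S_2+T_2)$) with the same estimates. The only cosmetic difference is that you fold the multiple-edge exclusion into a lower bound on the count $N_0$ of single-edge star pairs rather than subtracting it as a third bad family, which is arithmetically identical.
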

	
	In the case when $\totmultcut=(1-\epsilon)M-4\Delta^2$, for every $\bfm$ with $S(\bfm)<\totmultcut$ and $2\le k\le \Delta$ we have $\LB_{k}(\bfm+\bfe_k)\geq (m_k+1)M^k\epsilon^k$ and hence
	\begin{equation}\label{ineq:fb}\frac{\UB_k(\bfm)}{\LB_k(\bfm+\bfe_k)}\leq \frac{S_kT_k}{(m_k+1)M^{k}\epsilon^k}.
	\end{equation}
	The last inequality is often used in the rest of the paper.

	\ms
	
	\noindent{\em Choice of $\beta_\bfm$.}
	\ss
	
	Define
	\begin{align*}
	S_{\totmultcut}=    \{\bfm: S(\bfm)= \totmultcut\},\qquad S_{\totmultcut}^+ =  \{\bfm: S(\bfm)\ge \totmultcut\},\qquad S_{\totmultcut}^- =    \{\bfm: S(\bfm)< \totmultcut\} .
	\end{align*}    
	
	If $\bfm\in S_{\totmultcut}^+$, set $\beta_{\textbf{m}}=-1$. If  $\bfm\in S_{\totmultcut}^-$ and $\ell(\textbf{m})=\ell\geq 3$,   set $$\beta_{\textbf{m}}=\frac{4\Delta^{2\ell-2}}{M^{\ell-2} \eps^{\ell}}.$$
	For each remaining sequence $\textbf{m}=(0, m_2, 0, \ldots,0)\in S^-_{\totmultcut}$, inductively for decreasing values of $m_2$, set 
	$$\beta_{\textbf{m}}=\sum_{i=2}^\Delta\frac{\overline{f}_i(\bfm)}{\underline{b}_i(\bfm+\bfe_i)}(1+\beta_{\bfm+\bfe_i}).$$
	Note that this definition of $\beta_{\bfm}$ ensures that \Gen\ will finish either with a restart or with outputing some multigraph $G$. Indeed, the  only situation that could potentially cause a problem is that an iteration of the while loop of \Gen\ starts with some $G\in\mathcal{H}_{\bfm}$ with $S(\bfm)<\totmultcut$, and generates some $G'\in \mathcal{H}_{\bfm+\bfe_k}$ with $S(\bfm+\bfe_k)\geq \totmultcut$. However, this cannot happen, since in this case $\beta_{\bfm+\bfe_k}=-1$ and thus, for a  given $G\in \mathcal{H}_{\bfm}$, \Gen\ chooses $s=k$ with probability equal to 0.

	Recall the definition of $\mathcal{H}^{+}_{\textbf{m}}$ from~\eqn{Hmplus}.
	\begin{lemma}\lab{lem:beta} For all $\textbf{m}$ with $S(\textbf{m})< \totmultcut$ we have $$\beta_{\textbf{m}}\geq \frac{|\mathcal{H}^{+}_{\textbf{m}}|}{|\mathcal{H}_{\textbf{m}}|}.$$
	\end{lemma}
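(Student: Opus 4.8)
We may assume $\totmultcut\neq 0$, since otherwise the hypothesis $S(\bfm)<\totmultcut$ is never met and there is nothing to prove; in particular $\totmultcut=(1-\eps)M-4\Delta^2$ and \eqn{ineq:fb} is available. The engine of the argument is a \emph{switching inequality}: for every stratum index $\bfm$ with $S(\bfm)<\totmultcut$ and every $s$ with $2\le s\le\Delta$ and $S(\bfm+\bfe_s)\le\totmultcut$,
\begin{equation}\label{eq:plan-switch}
|\H_{\bfm+\bfe_s}|\,\underline b_s(\bfm+\bfe_s)\ \le\ |\H_\bfm|\,\overline f_s(\bfm).
\end{equation}
I would prove \eqn{eq:plan-switch} by double counting the pairs $(G,S)$ with $G\in\H_\bfm$ and $S$ an $s$-switching applicable to $G$: by Definition~\ref{d:switching} such $S$ produces $\outS\in\H_{\bfm+\bfe_s}$, and conversely every $s$-switching producing a graph of $\H_{\bfm+\bfe_s}$ arises from a unique $G\in\H_\bfm$. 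Summing over the source graphs and using $f_s(G)\le\overline f_s(\bfm)$ (Lemma~\ref{lemma:tbounds}) bounds the number of pairs by $|\H_\bfm|\,\overline f_s(\bfm)$. Summing instead over the target graphs, it equals $\sum_{G'\in\H_{\bfm+\bfe_s}}N(G')$, where $N(G')$ counts the $s$-switchings producing $G'$; building the anchor $(u_1,v_1,\dots,u_{s+1},v_{s+1})$ of such a switching one vertex-pair at a time, Lemma~\ref{lem:sub-b-bound} shows every valid $i$-subset of $G'$ has at least $\underline b_s(\bfm+\bfe_s,i)\ge1$ switching-compatible extensions, so $N(G')\ge\prod_{i=0}^{s}\underline b_s(\bfm+\bfe_s,i)=\underline b_s(\bfm+\bfe_s)$ for \emph{every} $G'\in\H_{\bfm+\bfe_s}$; in particular no target is missed, so \eqn{eq:plan-switch} follows (trivially so if $\H_\bfm=\emptyset$, since then $\H_{\bfm+\bfe_s}=\emptyset$ too).

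Write $x_i:=\Delta^{2i-2}/(M^{i-2}\eps^{i})$. Combining \eqn{eq:plan-switch} with \eqn{ineq:fb} and the crude estimate $S_kT_k\le M^2\Delta^{2k-2}$ (from $(s_i)_k\le s_i\Delta^{k-1}$), the $j$-th application of the switching inequality that adds a multiplicity-$i$ edge incurs a blow-up factor at most $x_i/j$. I would first treat $\ell(\bfm)=\ell\ge3$, where $\beta_\bfm=4x_\ell$. A stratum $\bfm'$ with $\bfm\prec\bfm'$ has $m'_i=m_i$ for $i<\ell$ and $m'_i\ge m_i$ for all $i$, so the added edges all have multiplicity $\ge\ell$; writing $a_i=m'_i-m_i$ and iterating \eqn{eq:plan-switch} along an ordering of the added edges that processes all edges of a given multiplicity consecutively gives $|\H_{\bfm'}|/|\H_\bfm|\le\prod_{i\ge\ell}x_i^{a_i}/a_i!$. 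Summing over all admissible $(a_i)$ with $\sum_i a_i\ge1$, and dropping the harmless restriction $S(\bfm')<\totmultcut$, yields $|\H_\bfm^+|\le|\H_\bfm|\bigl(\prod_{i=\ell}^{\Delta}e^{x_i}-1\bigr)=|\H_\bfm|\bigl(e^{\sum_{i\ge\ell}x_i}-1\bigr)$. The inequalities defining $\totmultcut$ give $x_3=\Delta^4/(M\eps^3)<\tfrac14$ and $x_{i+1}/x_i=\Delta^2/(M\eps)<\tfrac12$ (using $\eps^3>4\Delta^4/M$ and $\eps<1\le M$), so $\sum_{i\ge\ell}x_i\le2x_\ell<\tfrac12$ and therefore $e^{\sum x_i}-1\le2\sum x_i\le4x_\ell=\beta_\bfm$.

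For $\ell(\bfm)=2$, i.e.\ $\bfm=(0,m_2,0,\dots,0)$, a direct estimate is hopeless because $m_2$ may be large, so I would induct downward on $m_2$, mirroring the recursive definition of $\beta_\bfm$. Terms with $S(\bfm+\bfe_s)\ge\totmultcut$ contribute nothing since then $\beta_{\bfm+\bfe_s}=-1$. For the remaining $s$, \eqn{eq:plan-switch} together with $\beta_{\bfm+\bfe_s}|\H_{\bfm+\bfe_s}|\ge|\H^+_{\bfm+\bfe_s}|$ — supplied by the case $\ell\ge3$ if $s\ge3$, and by the induction hypothesis if $s=2$ — gives
\begin{equation*}
|\H_\bfm|\,\frac{\overline f_s(\bfm)}{\underline b_s(\bfm+\bfe_s)}\bigl(1+\beta_{\bfm+\bfe_s}\bigr)\ \ge\ |\H_{\bfm+\bfe_s}|+|\H^+_{\bfm+\bfe_s}|\ =\sum_{\substack{\bfm''\,:\ \bfm''=\bfm+\bfe_s\ \text{or}\ \bfm+\bfe_s\prec\bfm''\\ S(\bfm'')<\totmultcut}}|\H_{\bfm''}|.
\end{equation*}
Summing over $s$ and noting that every $\bfm''$ with $\bfm\prec\bfm''$ and $S(\bfm'')<\totmultcut$ is captured on the right for $s=k$, the least index at which $\bfm''$ and $\bfm$ differ (one checks $\bfm+\bfe_k=\bfm''$ or $\bfm+\bfe_k\prec\bfm''$, and $S(\bfm+\bfe_k)\le S(\bfm'')<\totmultcut$), we conclude $\beta_\bfm|\H_\bfm|\ge|\H^+_\bfm|$.

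The main obstacle is \eqn{eq:plan-switch}, and specifically its backward half: one must be certain that \emph{every} graph of $\H_{\bfm+\bfe_s}$ is produced by at least $\underline b_s(\bfm+\bfe_s)$ switchings, i.e.\ that the anchor around an arbitrary multiplicity-$s$ edge of an arbitrary member of $\H_{\bfm+\bfe_s}$ can always be completed. This is precisely the positivity hard-wired into the lower bounds $\underline b_s(\bfm,i)$ of Lemma~\ref{lem:sub-b-bound}, which rests on the inclusion--exclusion estimates of Appendix~A1 being valid uniformly over all of $\H_\bfm$ and all anchors; extracting clean, simultaneously valid bounds there is the delicate point. A secondary nuisance is the order-theoretic bookkeeping for $\ell=2$: verifying that, ranging over $s$, the index sets $\{\bfm'':\ \bfm''=\bfm+\bfe_s\ \text{or}\ \bfm+\bfe_s\prec\bfm''\}$ cover $\{\bfm'':\bfm\prec\bfm''\}$, with over-counting permitted but no omissions.
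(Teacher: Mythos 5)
Your proposal is correct and follows essentially the same route as the paper: the double-counting/switching inequality $|\H_{\bfm+\bfe_s}|\,\underline b_s(\bfm+\bfe_s)\le|\H_\bfm|\,\overline f_s(\bfm)$ (obtained exactly as in the paper, via the incremental lower bounds of Lemma~\ref{lem:sub-b-bound} on the number of switchings producing each $G'$), then a direct exponential-series summation over all strata above $\bfm$ for $\ell(\bfm)\ge3$, and finally downward induction on $m_2$ for $\ell(\bfm)=2$ using the recursive definition of $\beta_\bfm$. The only differences are cosmetic (you bound $e^{\sum x_i}-1\le 2\sum x_i\le 4x_\ell$ where the paper keeps the factor $(m_\ell+i)_i$ and estimates $(1+2x_\ell)e^{x_\ell}-1\le4x_\ell$, and you allow over-counting in the $\ell=2$ covering where the paper observes the union is disjoint), so no substantive comparison is needed.
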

	\ms
	
	\noindent{\em Choice of $\rho$.}
	\ss
	
	If $\totmultcut>0$, set
	$$
	\rho=\frac{B \frac{1}{1+\beta_{\textbf{0}}}}{1+B\frac{1}{1+\beta_{\textbf{0}}}}, \quad\mbox{where~} B=4\left(\frac{3}{2(1-\epsilon)^2}+\frac{3}{4(1-\epsilon)^4}\right)^{\epsilon M\slash 2} \left(\frac{\Delta^2e}{\epsilon^2(1-\epsilon)(\totmultcut-7)}\right)^{\totmultcut-7}.
	$$
	If $\totmultcut=0$, set $\rho=1$.
	\begin{remark}\label{remark:B}
		From the definition of $B$ and $\totmultcut$ it follows that $B=M^{-\Omega(M)}$, provided $M$ is large enough.
	\end{remark}
	\begin{lemma}\label{lemma:tail}
		If $\totmultcut>0$, we have 
		$$\frac{|\cup_{\textbf{m}\in S^+_{\totmultcut}}\mathcal{H}_{\textbf{m}}|}{|\mathcal{H}_{\textbf{0}}|}\leq B. $$
		\end{lemma}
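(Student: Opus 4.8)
The plan is to bound $|\mathcal{H}_\bfm|/|\mathcal{H}_\textbf{0}|$ for each stratum index $\bfm$ with $S(\bfm)\ge\totmultcut$ and then sum over all such $\bfm$. Fix $\bfm$ and build it up from $\textbf{0}$ one multiple edge at a time: set $\bfm^{(0)}=\textbf{0}$ and $\bfm^{(j)}=\bfm^{(j-1)}+\bfe_{t_j}$, where $K=\sum_{i\ge2}m_i$ and $t_1\le\dots\le t_K$. At step $j$, count the $t_j$-switchings carrying $\mathcal{H}_{\bfm^{(j-1)}}$ to $\mathcal{H}_{\bfm^{(j)}}$ in two ways: each is applied to some $G\in\mathcal{H}_{\bfm^{(j-1)}}$ in at most $\overline f_{t_j}(\bfm^{(j-1)})=S_{t_j}T_{t_j}$ ways (Lemma~\ref{lemma:tbounds}), and it produces each fixed $G'\in\mathcal{H}_{\bfm^{(j)}}$ in a number of ways equal to the number of valid inverse $t_j$-switchings at $G'$, namely the number of multiplicity-$t_j$ edges of $G'$ times the number of suitable ordered $t_j$-tuples of single edges of $G'$, which the same inclusion--exclusion that defines the parameters $\underline b_k(\bfm,i)$ bounds below by $\underline b_{t_j}(\bfm^{(j)})$ as long as those factors stay positive. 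Hence $|\mathcal{H}_{\bfm^{(j)}}|\le|\mathcal{H}_{\bfm^{(j-1)}}|\,S_{t_j}T_{t_j}/\underline b_{t_j}(\bfm^{(j)})$, and multiplying over $j$ (using $S_t,T_t\le M\Delta^{t-1}$) gives a bound of the form
\[
\frac{|\mathcal{H}_\bfm|}{|\mathcal{H}_\textbf{0}|}\ \le\ \prod_{t\ge2}\frac{1}{m_t!}\prod_{j=1}^{m_t}\frac{M^2\Delta^{2t-2}}{\bigl(M-S^{(j)}-2t\Delta-2\Delta^2\bigr)^{t}},
\]
where $S^{(j)}$ is the running value of $S$ at the relevant step, and the $1/m_t!$ comes from the running factors $m_t^{(j)}=1,\dots,m_t$ inside the successive $\underline b_{t}$'s.

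Now sum over all $\bfm$ with $S(\bfm)\ge\totmultcut$, splitting each product according to whether $S^{(j)}<\totmultcut$ or not. While $S^{(j)}<\totmultcut=(1-\epsilon)M-4\Delta^2$ each denominator is at least $(\epsilon M)^t$, so a multiplicity-$t$ edge costs at most $\Delta^{2t-2}/(\epsilon^tM^{t-2})$: this equals $\Delta^2/\epsilon^2$ for $t=2$, while for $t\ge3$ the hypothesis $\epsilon^3>4\Delta^4/M$ forces it below $\tfrac14$, so the double edges dominate and the higher multiplicities cost only a bounded factor. Once $S^{(j)}\ge\totmultcut$, at most $\epsilon M/2$ more edges can be added before $S$ reaches the cut-off $M-O(\Delta^2)$ beyond which I handle things separately (see below), each contributing at least $2$ to $S$, and there the denominators are controlled through $M-S^{(j)}\ge M-S(\bfm)$; summing the costs $S_tT_t/\underline b_t$ over $t\ge2$ yields the base $\tfrac{3}{2(1-\epsilon)^2}+\tfrac{3}{4(1-\epsilon)^4}$ (the two summands being $t=2$ and $t\ge3$) raised to that number of edges. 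Finally the constraint $S(\bfm)\ge\totmultcut$ forces $\sum_t m_t$ to be large, and Stirling's bound $m!\ge(m/e)^m$ converts the leftover factorial into the decaying power appearing in the second factor of $B$; assembling the pieces and pushing all the constant slack into the leading $4$ yields $\sum_{\bfm\in S^+_{\totmultcut}}|\mathcal{H}_\bfm|\le B\,|\mathcal{H}_\textbf{0}|$, which is the claim.

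The hard part is controlling the regime where $S(\bfm)$ comes close to $M$. There the multigraph has very few single edges, the inclusion--exclusion lower bounds (and the $\underline b_k(\bfm,i)$ themselves) can become non-positive, and such strata may not even be reachable from $\mathcal{H}_\textbf{0}$ by switchings, so the chain argument breaks down over the last $O(\Delta^2)$ values of $S$. For that narrow band I would bound $|\mathcal{H}_\bfm|$ by a direct count, regarding such a multigraph as its $O(\Delta^2)$ single edges together with an all-multiplicities-$\ge2$ multigraph on the residual degrees and estimating both under the degree cap $\Delta$, so that the band is negligible against $|\mathcal{H}_\textbf{0}|$. The remaining work is purely computational: verifying that the two geometric-type sums telescope to precisely the displayed bases, which is exactly where the two standing inequalities in the choice of $\totmultcut$, namely $\epsilon^3>4\Delta^4/M$ and $2\totmultcut(\totmultcut-\Delta^2-\Delta^3)\ge(\totmultcut+4\Delta^2)^2$, get used.
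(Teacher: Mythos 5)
Your first half is essentially the paper's: iterating the forward-switching count $|\H_{\bfm+\bfe_k}|\,\LB_k(\bfm+\bfe_k)\le|\H_{\bfm}|\,\UB_k(\bfm)$ gives $|\H_{\bfm}|/|\H_{\bf 0}|\le\prod_k\big(\Delta^{2k-2}/\eps^kM^{k-2}\big)^{m_k}/m_k!$ for $S(\bfm)\le\totmultcut$, and summing over strata at a fixed level $t\le\totmultcut$ (the paper does this with a generating function) yields the factor $\big(\Delta^2e/\eps^2(1-\eps)(\totmultcut-7)\big)^{\totmultcut-7}$ of $B$. The gap is in the region $S(\bfm)\ge\totmultcut$, which is the actual content of the lemma. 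There the ratio $\UB_k/\LB_k\approx M^2\Delta^{2k-2}/\big(m_k(M-S)^k\big)$ is no longer small: it exceeds $1$ and grows like $M/\Delta^2$ per step as $S$ approaches $M$, so chaining forward switchings cannot produce the geometrically decaying factor $\big(\tfrac{3}{2(1-\eps)^2}+\tfrac{3}{4(1-\eps)^4}\big)^{\eps M/2}$ in $B$. Your assertion that ``summing the costs $S_tT_t/\underline b_t$ over $t\ge2$ yields the base'' is not a computation that goes through --- those sums are not even bounded by $1$ in this regime --- and the deferred ``direct count'' near $S=M$ is precisely the part that remains unproved.

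The paper's key device, which your proposal is missing, is an \emph{auxiliary switching}: take two multiple edges $u_1v_1$ and $u_2v_2$ with $u_1v_2,u_2v_1$ non-edges, decrease each multiplicity by one, and add $u_1v_2$ and $u_2v_1$. The forward count of these is at least $\tfrac{\ell}{\Delta}\big(\tfrac{\ell}{\Delta}-O(\Delta^2)\big)$ where $\ell=S(\bfm)$ --- it depends on there being \emph{many edges inside multiple edges}, not on there being many single edges --- while the backward count is at most $M(\Delta-1)^2$. This gives $|\S_\ell|/|\S_{\ell-2}\cup\S_{\ell-3}\cup\S_{\ell-4}|\le 2\Delta^4/(1-\eps)^2M\le\tfrac{1}{2(1-\eps)^2}=:C_0$ uniformly for all $\ell\ge\totmultcut$, all the way up to $\ell=M$; the constant $\tfrac{3}{2(1-\eps)^2}+\tfrac{3}{4(1-\eps)^4}=3(C_0+C_0^2)$ then comes from iterating this over windows of four consecutive values of $S$ (the switching drops $S$ by $2$, $3$ or $4$). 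This single idea handles exactly the near-$M$ band that defeats your chain argument, and it is what produces the specific constant $B$ required by the statement (and used later to define $\rho$ and the acceptance probability in \Brute). Without it, or a worked-out substitute for the band $M-S(\bfm)=O(\Delta^2)$ together with a correct derivation of the first factor of $B$, the proof is incomplete.
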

	
	We note immediately that $\totmultcut$ equals 0 only for finitely many $M$. In this case $\rho=1$, and hence only \Brute\ is called in \MatrixGen. In Section~\ref{sec:Brute} it is shown that \Brute\ is uniform generator and has a constant expected running time. Hence for the rest of the paper we consider only the case $\totmultcut>0$.
	
	\section{ Uniformity, time and space complexity of \Gen}
	\lab{sec:GenUniTime}
 In this section we prove that \Gen\  is a uniform sampler and estimate its expected run time and space complexity. 
	
	\begin{thm}\lab{thm:uniformgen}
		 Assume that $G$ is distributed uniformly in $\mathcal{H}_0$. Then \Gen$(G)$ generates every bipartite multigraph with bipartite degree sequence $(\textbf{s}, \textbf{t})$ and total multiplicity $S(\bfm)<t_0$ with probability equal to $\frac{1}{(1+\beta_o)|\mathcal{H}_0|}$. 
	\end{thm}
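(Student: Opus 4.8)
The plan is to analyze the Markov chain underlying \Gen\ and show that for every target multigraph $G^*$ with stratum index $\bfm^*$ satisfying $S(\bfm^*)<\totmultcut$, the probability that \Gen$(G)$ outputs $G^*$ (given $G$ uniform in $\H_{\bf 0}$) equals $\frac{1}{(1+\beta_{\bf 0})|\H_{\bf 0}|}$. I would proceed by induction on the number of multiple edges (equivalently on the $\prec$-order of stratum indices), tracking the invariant that, conditioned on reaching stratum $\bfm$ without having rejected or output, the current multigraph $G_i$ is uniformly distributed in $\H_{\bfm}$, and the probability of reaching that state with $G_i$ equal to any fixed $H\in\H_{\bfm}$ is exactly $\frac{1}{|\H_{\bf 0}|}\prod$ (telescoping product of $\beta$-factors), so that the ``mass'' on each element of $\H_{\bfm}$ is the same constant $\frac{1}{|\H_{\bf 0}|}$ times $\beta_{\bf 0}^{-1}$-type normalization. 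Concretely: the base case is $\bfm=\bf 0$, where $G=G_0$ is uniform in $\H_{\bf 0}$ by hypothesis, each element carrying mass $1/|\H_{\bf 0}|$.

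For the inductive step I would fix a stratum $\bfm$ and a graph $H'\in\H_{\bfm'}$ with $\bfm'=\bfm+\bfe_s$, and compute the probability that one iteration of the while loop, started from $G_i=H$ uniform in $\H_{\bfm}$, produces $G_{i+1}=H'$ without rejecting. This probability is a sum over all $s$-switchings $S$ with $\inS\in\H_{\bfm}$ and $\outS=H'$ of the product: (probability of not outputting) $\times$ (probability of choosing $s$) $\times$ (probability $1/f_s(\inS)$ of picking that particular switching) $\times$ (probability $f_s(\inS)/\UB_s(\bfm)$ of surviving f-rejection) $\times$ (probability $\underline b_s(\bfm)/b_s(H',S)$ of surviving b-rejection). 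The $f_s(\inS)$ factors cancel, the choose-$s$ probability contributes $\frac{\UB_s(\bfm)}{\LB_s(\bfm')}\cdot\frac{1+\beta_{\bfm'}}{\beta_{\bfm}}$, and the $\UB_s(\bfm)$ cancels, leaving each switching contributing $\frac{1}{\beta_{\bfm}}\cdot\frac{\beta_{\bfm}}{1+\beta_{\bfm}}\cdot\frac{1+\beta_{\bfm'}}{\LB_s(\bfm')}\cdot\frac{\LB_s(\bfm')}{b_s(H',S)}$, i.e.\ $\frac{1+\beta_{\bfm'}}{(1+\beta_{\bfm})\,b_s(H',S)}$. The key combinatorial identity, which I would extract from the incremental-relaxation framework of~\cite{arman19} (or reprove directly via the anchored-graph correspondence and the factorization $b_s(G',S)=\prod_{i=0}^s b_s(G',V_i(S))$), is that $\sum_{S:\,\outS=H'} \frac{1}{b_s(H',S)} = 1$ — that is, summing $1/b_s(H',S)$ over all $s$-switchings producing a fixed $H'$ counts $H'$ exactly once. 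Hence the total incoming mass to $H'$ from stratum $\bfm$, per unit mass on $\H_{\bfm}$, is $\frac{1+\beta_{\bfm'}}{1+\beta_{\bfm}}$, and since each element of $\H_{\bfm}$ carries mass $\frac{1}{(1+\beta_{\bf 0})|\H_{\bf 0}|}\cdot(1+\beta_{\bf 0})$ — more carefully, the inductive hypothesis says each carries the survival-mass that telescopes — one checks the $(1+\beta)$-factors telescope so every $H'\in\H_{\bfm'}$ again carries equal mass, uniform across the whole stratum.

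Finally I would combine with the output step: the probability that \Gen\ outputs a specific $G^*\in\H_{\bfm^*}$ is (mass reaching $G^*$ at stratum $\bfm^*$) $\times \frac{1}{1+\beta_{\bfm^*}}$. The telescoping of the $(1+\beta_{\bfm})$ factors along any $\prec$-increasing path from $\bf 0$ to $\bfm^*$ — together with the fact that they are path-independent (each step multiplies by $1+\beta_{\text{new}}$ and the initial iteration at stratum $\bfm$ multiplies by $\frac{1}{1+\beta_{\bfm}}$ only at output, or carries the $\beta_{\bfm}$ in the denominator of the choose-$s$ probability at transition) — collapses everything to $\frac{1}{(1+\beta_{\bf 0})|\H_{\bf 0}|}$, independent of $G^*$. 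Two technical points need care and I expect them to be the main obstacles: first, verifying that \emph{all} switchings producing $H'$ are actually selectable, i.e.\ that the b-rejection denominator $b_s(H',S)$ and the validity conditions exactly match the switchings enumerated by the algorithm (this is where Lemmas~\ref{lem:sub-b-bound} and~\ref{lemma:tbounds} and the anchored-graph equivalence are used to ensure no switching is ``lost'' and the bounds $\LB$, $\UB$ make all probabilities legitimate, via Lemma~\ref{lem:probability}); and second, handling the boundary interaction with $\totmultcut$ — one must confirm that switchings landing in $S_{\totmultcut}^+$ are chosen with probability $0$ (because $\beta_{\bfm'}=-1$ there), so the induction stays entirely within $S_{\totmultcut}^-$ and the stated probability formula holds precisely for the multigraphs with $S(\bfm)<\totmultcut$. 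The genuinely substantive step is the identity $\sum_{S:\,\outS=H'}1/b_s(H',S)=1$; everything else is bookkeeping of cancellations.
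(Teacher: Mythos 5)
Your proposal is correct and follows essentially the same route as the paper's proof: induction on the switching steps maintaining uniformity within each stratum, cancellation of the $f_s$ and $\UB_s$ factors so each switching contributes $\frac{1+\beta_{\bfm'}}{(1+\beta_{\bfm})\,b_s(H',S)}$, the key identity $\sum_{S:\,\outS=H'}1/b_s(H',S)=1$ proved via the anchored-graph factorization (the paper does this by reverse induction on $j$ using the switching-compatible counts $b_s(G',V_i(S))$), and the telescoping of $(1+\beta_{\bfm})$ factors together with the observation that $\beta_{\bfm'}=-1$ blocks transitions out of $S^-_{\totmultcut}$.
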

	\begin{proof}    
		We say that a multigraph $H$ was {\em reached} in \Gen\ if a switching creating $H$ was selected in a switching step, and was not rejected.   Let $G_0=G$, and $G_t$ denote the multigraph reached after $t$ switching steps. If  \Gen\ terminates before completing $t$ non-rejected switchings,  let $G_t=\emptyset$. We will prove by induction on $t$ that, conditional on $G_t\in\H_{\bfm}$, $G_t$ is uniformly distributed in $\H_{\bfm}$. Assume $t\ge 0$ and the inductive statement holds for $t$. Let $G_t\in \H_{\bfm}$ for some $\bfm\in S^-(\totmultcut)$.  Then, there exists $\sigma_{\bfm}$ such that the probability that $G$ is reached after $t$ switching steps is equal to $\sigma_{\bfm}$, for every $G\in\H_{\bfm}$. To establish the inductive step we prove the following. 
		\begin{claim}
			For every $k\geq \ell(\bfm)$ and every  $G\in \H_{\bfm+\bfe_k}$ such that $\bfm+\bfe_k\in S^-(\totmultcut)$,
			$$\mathbb{P}(G_{t+1}=G)=\sigma_{\bfm}\frac{1+\beta_{\textbf{m}+\bfe_{k}}}{1+\beta_{\textbf{m}}}.$$
			
		\end{claim}
		\noindent{\bf Proof of Claim.}
		By the description of \Gen,
		\[
		\pr(G_{t+1}=G)=\sum_{S\,:\,\outS=G} \pr(\inS   \mbox{ is reached and $S$ is selected and not rejected}) 
		\]
		where the summation is restricted to $k$-switchings $S$. For such an  $S$ with $G=\outS$, the probability of $\inS$ being reached is   $\sigma_{\bfm}$, and conditional upon that, the probability of not outputting  at the beginning of step $t+1$  is $\beta_{\bfm}/(1+\beta_{\bfm})$. 
		Conditional on that, the probability that the stratum index Markov chain transitions from $\H_{\bfm}$ to $\H_{\bfm+\bfe_k}$ is $(\overline{f}_k(\bfm)/\underline{b}_k(\bfm+\bfe_k))(1+\beta_{\bfm+\bfe_k})/\beta_{\bfm}$. Conditional on that, the    probability of selecting the particular switching $S$ is $1/f_k(\inS)$.
		Conditional on that, the probability that $S$ is not f- or b-rejected  is 
		\[
		\frac{f_k(\inS)}{\overline{f}_k(\bfm)}\frac{\underline{b}_k(\bfm+\bfe_k)}{b_k(G,S)}=\frac{f_k(\inS)\underline{b}_k(\bfm+\bfe_k)}{\overline{f}_k(\bfm)}\prod_{i=0}^{k}\frac{1}{b_k(G, V_i(S))}.
		\]  
		Taking the product of all terms and summing over appropriate $S$, we have
		\[
		\pr(G_{t+1}=G)=\sigma_{\bfm}\frac{1+\beta_{\bfm+\bfe_k}}{1+\beta_{\bfm}} \sum_{S\,:\,G=\outS} \prod_{i=0}^{k}\frac{1}{b_k(G,V_i(S))} 
		\]
		with the summation again restricted to $k$-switchings $S$.
		It only remains to prove that the sum of products above is equal to 1. This is easily verified by showing, by reverse induction on $j$, that for any valid $  V'_j$ 
		$$
		\sum_{S\,:\,G=\outS,\, V_j(S)=V'_j}\  \prod_{i=j}^{k}\frac{1}{b_k(G, V_i(S))}  = 1.
		$$
		The case $j=k+1$ is trivial as the product is empty, and the inductive step follows from the fact that, by definition, there are exactly $b_k(G,V_{j-1})$ switching-compatible choices for $V_j$, given $V_{j-1}$.  \qed
		\ss
		
		Note that, in the terminology of~\cite{arman19}, the inductive step in the above proof corresponds to the incremental relaxation from what is essentially a uniformly chosen  random multigraph   anchored at $V_j$ to a similar  one anchored at $V_{j-1}$.	
		
		The claim implies that
		$$
		\sigma_{\bfm+\bfe_k}\frac{1}{1+\beta_{\textbf{m}+\bfe_{k}}}=\sigma_{\bfm}\frac{1}{1+\beta_{\textbf{m}}}.
		$$ 
		Next we prove that for every $\bfm$ and $k$ such that $\bfm+\bfe_k\in S^-(\totmultcut)$, and every $G \in \mathcal{H}_{\textbf{m}+\textbf{e}_k}$ and $G' \in \mathcal{H}_{\textbf{m}}$, the probabilities that $G$ and $G'$ are outputted are equal. From this it follows that \Gen\ outputs every multigraph in  $\mathcal{H}_{\textbf{m}}$ with $\bfm\in S^-(\totmultcut)$ with the same probability. We have 
		\begin{align*}
		&\mathbb{P}(\text{output} \; G )=\mathbb{P}(G_t=G)\frac{1}{1+\beta_{\textbf{m}+e_{k}}}=\sigma_{\bfm+\bfe_k}\frac{1}{1+\beta_{\textbf{m}+\bfe_{k}}}\\
		=&\sigma_{\bfm}\frac{1}{1+\beta_{\textbf{m}}}=\mathbb{P}(G_{t-1}=G^\prime )\frac{1}{1+\beta_{\textbf{m}}}=	\mathbb{P}(\text{output} \; G').
		\end{align*} 
		
		 Finally, for any simple bipartite $G$ the probability that \Gen\ outputs $G$ is equal to  $\frac{1}{(1+\beta_0)|\mathcal{H}_{0}|}$.

\qed \end{proof} 

	\begin{thm}\label{thm:GenTimeSpace}
	\Gen\ has time complexity $O(M)$ and space complexity $O(mn\log (\Delta+1))$.	
	\end{thm}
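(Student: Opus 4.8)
I would bound the running time of \Gen\ by splitting it into the set-up that precedes the \textbf{while} loop and the loop itself. The plan is to show the set-up costs $O(M)$ deterministically, and that the loop costs $O(M)$ in expectation — the crux being that, although the loop can perform up to $\totmultcut/2=O(M)$ switchings, a typical run performs only $O(\Delta^2)$ of them. For the space bound I would show that everything stored fits within the cost of an $m\times n$ representation of the current multigraph.

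\textbf{Set-up.} Evaluating $\eps,\totmultcut,\rho,B$ is $O(1)$. All $S_k,T_k$ are computed in $O(M)$ time: for each $i$ the falling factorials $(s_i)_2,\dots,(s_i)_{s_i}$ are formed by successive multiplications and accumulated (and $S_k=T_k=0$ for $k>\Delta$), while $\sum_i s_i=\sum_j t_j=2M$. The one remaining nontrivial precomputation is the table $\beta_{(0,m_2,0,\ldots,0)}$ for $0\le m_2<\totmultcut/2$; here I would use the observation that in the recursion for $\beta_{(0,m_2,0,\ldots)}$ the sum over indices $i\ge 3$ is, except for the $O(\Delta)$ largest values of $m_2$, a single constant independent of $m_2$ (for such $i$ one has $\ell(\bfm+\bfe_i)=i\ge 3$, so $\beta_{\bfm+\bfe_i}$ comes from the explicit formula, and $\UB_i(\bfm)=S_iT_i$, $\LB_i(\bfm+\bfe_i)=(\eps M)^i$). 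That constant is computed once in $O(\Delta)$ time, after which the remaining one-term recurrence for the $\beta_{(0,m_2,0,\ldots)}$ is unrolled in $O(\totmultcut)=O(M)$ steps. (Carrying $O(\Delta^2+\log M)$ bits of precision throughout suffices: the analysis uses the $\beta$'s only via the inequalities of Lemmas~\ref{lem:probability} and~\ref{lem:beta}, which the rounded values can be arranged to satisfy, so exactness is preserved.) Finally one builds, in $O(M)$ time, the structures used inside the loop: an $m\times n$ multiplicity matrix for $G$ (giving $O(1)$ adjacency and multiplicity queries), per-vertex arrays of incident edge-ends with $O(1)$ random access, running single-degree counts, and — built lazily the first time a multiplicity-$s$ switching occurs, for a total of $O(\sum_u s_u)=O(M)$ over the run — tables for drawing $u\in X$ with probability $\propto(s_u)_s$ and $v\in Y$ with probability $\propto(t_v)_s$.

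\textbf{The loop.} Each iteration either terminates \Gen\ or performs one switching, raising $S(\bfm)$ by $s\ge 2$; since the parameters make any transition into a stratum with $S(\bfm)\ge\totmultcut$ have probability $0$, every run makes at most $\totmultcut/2<M/2$ iterations. I would show that an iteration performing a multiplicity-$s$ switching costs $O(\Delta^2)$ (and $O(\Delta)$ with more careful bookkeeping): $\beta_\bfm$ is obtained in $O(1)$; $s$ is sampled by evaluating the choice probabilities for $t,t+1,\dots$ one at a time until their running sum passes a uniform, i.e.\ in $O(s)$ time, or all $\Delta-t+1$ of them and then $\beta$-rejecting; the switching is obtained by drawing a \emph{candidate} — a vertex $u_1\in X$ with probability $\propto(s_{u_1})_s$ together with an ordered $s$-tuple of distinct incident edge-ends, and likewise $v_1\in Y$, which is precisely a uniform draw from the $\UB_s(\bfm)=S_sT_s$ candidates — and testing the distinctness/non-adjacency constraints in $O(s)$ time, an invalid candidate triggering the f-rejection; this implements ``pick a uniform valid $s$-switching, then f-reject with probability $1-f_s(G)/\UB_s(\bfm)$'' while never computing $f_s(G)$. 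The b-rejection is the incremental relaxation of~\cite{arman19}: one computes $b_s(G',V_0)$ (the recorded number of multiplicity-$s$ edges) and, for $1\le i\le s$, computes $b_s(G',V_i)$ as the number of single edges of $G'$ minus inclusion–exclusion corrections involving the single-degrees of the $O(s)$ vertices of $V_i$ and of the neighbourhoods of $u_1$ and $v_1$ — these last are the same for every $i$, hence computed once in $O(\deg u_1+\deg v_1)=O(\Delta)$ time — then forms $b_s(G',S)=\prod_i b_s(G',V_i)$ and the rejection test; updating the data structures costs $O(s)$. It then remains to bound the expected number $K=\sum_i m_i$ of switchings. By the reach-probabilities from the proof of Theorem~\ref{thm:uniformgen}, $\ex[K]=\sum_{\bfm\ne\mathbf 0}|\H_\bfm|(1+\beta_\bfm)/\big((1+\beta_{\mathbf 0})|\H_{\mathbf 0}|\big)$. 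Combining Lemma~\ref{lem:beta}, the near-tightness of $\beta_\bfm$ as an estimate of $|\H_\bfm^+|/|\H_\bfm|$ guaranteed by the parameter choices of Section~\ref{sec:parameters}, and the elementary bound $|\{\bfm:\bfm\prec\bfm'\}|\le 1+\sum_i m'_i$, one gets $\ex[K]=O\big(1+\ex_{\mathrm{unif}}[\#\text{multiple edges}]\big)$, and a uniformly random multigraph with marginals $(\bfs,\bft)$ has $O(\Delta^2)$ multiple edges in expectation (using $S_2,T_2\le M\Delta$), so $\ex[K]=O(\Delta^2)$. Hence the expected loop time is $O(\Delta^2)\cdot\ex[K]=O(\Delta^4)=O(M)$, and together with the $O(M)$ set-up, \Gen\ runs in $O(M)$ expected time.

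\textbf{Space, and the main obstacle.} The multiplicity matrix uses $mn\lceil\log_2(\Delta+1)\rceil$ bits; the edge arrays, degree counts and lazily built sampling tables use $O(M\log\Delta)=O(mn\log(\Delta+1))$ bits since $M\le mn$; and the $O(\Delta)$ integers $S_k,T_k$ (each $O(\Delta\log\Delta+\log m)$ bits) together with the $O(M)$ values $\beta_{(0,m_2,0,\ldots)}$ (each $O(\Delta^2+\log M)$ bits) also fit in $O(mn\log(\Delta+1))$ bits, using $M^2\le mn\Delta^2$ (from $M\le m\Delta$ and $M\le n\Delta$) and $\Delta^2<\sqrt M$, which holds whenever $\totmultcut>0$, i.e.\ whenever \Gen\ is actually invoked. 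The hard part will be the estimate $\ex[K]=O(\Delta^2)$: this is where the tight parameter choices of Section~\ref{sec:parameters} genuinely have to be exploited, and it is the only place where the crude bound $K\le\totmultcut/2$ is insufficient. A secondary (more routine) burden is checking that the incremental-relaxation b-rejection and the candidate draw can each be implemented within the claimed per-switching time using the data structures above.
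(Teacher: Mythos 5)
Your decomposition (linear-time set-up, $O(\Delta^2)$ per switching via the candidate-pair implementation of f-rejection and incremental relaxation for b-rejection, and an $O(\Delta^2)$ bound on the expected number of switchings, with the space bound dominated by the multiplicity matrix) is the same skeleton as the paper's proof, and your set-up, per-iteration and space analyses essentially match the paper's. The divergence is at the crux you yourself flag: bounding the expected number $K$ of switchings. There your route has a genuine gap. You propose to evaluate $\ex[K]=\sum_{\bfm\ne\mathbf 0}|\H_\bfm|(1+\beta_\bfm)/\big((1+\beta_{\mathbf 0})|\H_{\mathbf 0}|\big)$ and reduce it to the expected number of multiple edges of a uniformly random multigraph, invoking ``near-tightness'' of $\beta_\bfm$. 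But Lemma~\ref{lem:beta} is one-sided ($\beta_\bfm\ge|\H_\bfm^+|/|\H_\bfm|$), and the only control of stratum sizes available is the upper bound $|\H_{\bfm+\bfe_2}|/|\H_\bfm|\le S_2T_2/((m_2+1)M^2\eps^2)$ from inequality~(\ref{ineq:H_i}); there is no matching lower bound. Writing $\lambda=S_2T_2/(\eps^2M^2)$ and $\lambda_0=S_2T_2/M^2$, the available upper bounds give $\sum_{m_2}|\H_{(0,m_2,0,\ldots,0)}|(1+\beta_{(0,m_2,0,\ldots,0)})\lesssim|\H_{\mathbf 0}|e^{\lambda}$, while the recursive definition of $\beta$ only yields $1+\beta_{\mathbf 0}\gtrsim e^{\lambda_0}$, so this route leaves a factor of order $e^{(1-\eps^2)\lambda}=e^{\Theta(\Delta^2)}$ unaccounted for, which can dwarf $M$. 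The same missing two-sided control is needed for your auxiliary claim that a uniform multigraph has $O(\Delta^2)$ multiple edges in expectation --- a statement the paper never proves and never needs.

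The paper avoids this entirely by analysing the algorithm's forward dynamics rather than its output distribution: Lemma~\ref{lemma:largem_0} shows that once $m_2>3S_2T_2/\eps^2M^2$, the conditional probability of taking another double-edge step (rather than outputting) is at most $7/8$ --- this uses only $\UB_2(\bfm)/\LB_2(\bfm+\bfe_2)\le 1/3$ in that regime together with the recursive definition of $\beta_\bfm$ --- so the number of double edges has a geometric tail beyond $O(\Delta^2)$; separate short computations bound the probability of ever creating a triple edge by $1/2$ per opportunity and the probability of any multiplicity at least $4$ by $O(\Delta^6/M^2)$. You should replace your counting argument with this trajectory argument (or else supply the missing lower bounds on the ratios $|\H_{\bfm+\bfe_2}|/|\H_\bfm|$, which is substantially more work and is not done anywhere in the paper).
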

	\begin{proof}	
	We use appropriate data structures to store the set of edges (adjacency matrix without initialisation), and the positions of the multiple edges so that look-up operations take constant time.
		
	Pre-calculation of $\beta_{\textbf{m}}$ can be done in time $O(M)$. Only those with   $\ell(\bfm)\le 2$  need to be treated, since the others are specified in Section~\ref{sec:parameters}. All of the moments $S_{i}$ and $T_i$ can be computed in advance by first computing the frequencies of the degrees, and then calculating each moment as a running sum. Overall, the time required for this is  $O(n+\Delta^2)=O(M)$ since all degrees are positive.  After that, those $\beta_{\bfm}$ with $\ell(\bfm)=2$ can be calculated inductively in time $O(\totmultcut)=O(M)$, since in the summation defining them, all terms with $i>2$ are independent of $\bfm$ and can thus be pre-computed.
	
	Next consider the f-rejection step. We do not need to evaluate $f_{s}(G)$: we can choose a pair of $s$-stars, independently, and uniformly at random, and reject  if this pair of stars does not designate a valid $s$-switching. Since $\overline{f}_s(\bfm)$ is exactly the number of pairs of $s$-stars allowing repetition, the probability of rejection is exactly $1-f_s(G)/\overline{f}_s(\bfm)$, as desired. 
	
	Finally, to calculate the value of $b_{s}(G')$,  we  know that $b_s(G^\prime, V_0)=m_s+1$ and   need additionally to find  $b_{s}(G^\prime,V_i(S))$ for  all $i\in[s-1]$.  Each $b_{s}(G^\prime,V_i(S))$ is equal to the number of simple edges in $G'$ minus $|X_i(S)|$, where $X_i(S)$ is the set of simple edges that have at least one endpoint in $V_i(S)$, or in the neigbourhood of $\{u_1,v_1\}$. The initial set $X_1(S)$ can be determined by examining the $2$-neighbourhood of the multiple edge $u_1v_1$, which can be done in time $O(\Delta^2)$. Each subsequent $X_i(S)$ is obtained from $X_{i-1}(S)$ by adding edges that are incident with at least one of vertices $u_i$ or $v_i$. Hence it takes $O(\Delta)$ time to  obtain $X_i(S)$ from $X_{i-1}(S)$. This update has to be done at most $\Delta$ times, making $O(\Delta^2)$ time in total to compute $b_{s}(G')$. Assembling these observations, we conclude that each iteration of the {\bf while} loop (i.e.\ switching step) in \Gen\ requires time $O(\Delta^2)$.
	
	Assuming that $\Gen$ creates at most $t_0:=\max\{24, 4 S_2T_2/\epsilon^2M^2\}$ double edges, it requires $O(t_0\Delta^2)=O(\Delta^4)$ computation time. The following lemma shows that with sufficiently high probability \Gen\ does not create so many double edges.
	\begin{lemma}\label{lemma:largem_0}
		The probability that \Gen\ reaches a graph in $\mathcal{H}_{\bfm}$ with $m_2> 3 S_2T_2/\epsilon^2M^2$ is at most $\left(\frac{7}{8}\right)^{m_2-3 S_2T_2/\epsilon^2M^2}$.
	\end{lemma}
	 The proof of this Lemma is presented in Appendix A2. 
	 Lemma~\ref{lemma:largem_0} implies that the event that  more than $t_0$ double edges are in the output contributes at most $$\sum_{m_2>t_0} (7/8)^{m_2-3 S_2T_2/\epsilon^2M^2} (m_2-t_0)\cdot O(\Delta^2)=O(\Delta^4)
	$$ 
	to  the expected  runtime of \Gen.
	
	Let $G_0=G$, and $G_t$ denote the multigraph reached after $t$ switching steps, where $G_t=\emptyset$ if  \Gen\ terminates before completing $t$ switchings. Given $G_t\in \mathcal{H}_\bfm$, the probability that $G_{t+1}\in\mathcal{H}_{\bfm+\bfe_3}$ divided by the probability of outputting $G_t$ is 
	$$
	\frac{\UB_3(\bfm)}{\LB_3(\bfm+\bfe_3)}(1+\beta_{\bfm+\bfe_3})\leq\frac{S_3T_3}{\epsilon^3M^3}\bigg(1+\frac{4\Delta^4}{\epsilon^3M}\bigg)<1/2.
	$$
	(Here we   used the inequalities (\ref{ineq:fb}), $\max\{S_3,T_3\}<\Delta^{2}M$ and $4\Delta^4<\epsilon^3M$ in turn.) Hence the contribution to the expected runtime of \Gen\ arising from triple edges is  
	$$
	\sum_{i=1}^{\totmultcut} \left(\frac{1}{2}\right)^{i} i\cdot O(\Delta^2)=O(\Delta^2).
	$$
	
	Finally, similarly to triple edges, the probability of ever producing an edge of multiplicity at least 4 is  $O( \Delta^6/M^2)$, and hence the contribution from such multiple edges
	is $O\big((\Delta^6/M^2)\Delta^2\totmultcut\big)=O(M)$.

	Hence  \Gen\ runs with expected time $O(M)$. 
	
	The space complexity of \Gen\ is bounded by $O(mn\log (\Delta+1))$, as the main contribution comes from the adjacency matrix and each entry of the matrix is at most $\Delta$.  
	\qed \end{proof}

	\section{Uniformity, time and space complexity of \Brute}\label{sec:Brute}
	 In this section we provide a description of \Brute, prove that it is a uniform sampler of multigraphs with degree sequence $(\bfs, \bft)$, and total multiplicity at least $t_0$ and estimate its complexity. We will analyse the complexity of the algorithm while simultaneously providing the description. 
	
	 Recall that $\M(\bfs,\bft)$  denotes the set of bipartite multigraphs with degree sequence $(\bfs,\bft)$. \Brute\ will generate a uniformly random member of $\M(\bfs,\bft)$ conditional upon  the total multiplicity of multiple edges being at least $t_0$. 
	
	Before diving into the details,  we give an overall picture of how the  procedure \Brute\ works by considering a simpler problem. 
	Given a vertex $v$, imagine that we enumerate all   possibilities for the set $E(v)$   of  edges incident with $v$. Imagine also that given a particular $E(v)$, we can compute $N(E(v))$, the number of multigraphs in $\M(\bfs,\bft)$ such that the  set of edges incident  with  $v$  is precisely  $E(v)$. Then we can sample a uniformly random bipartite multigraph from $\M(\bfs,\bft)$, by first generating the edges incident with $v$ by picking $E(v)$ with probability proportional to $N(E(v))$, then  generating the set of edges incident with the second vertex in a similar way, and so on, until all edges are generated. This simple generation scheme has been proposed in the past for random generation of graphs with given degree sequence, and the problem is that there is no efficient way known to compute the numbers $N(E(v))$, which essentially requires knowing the number of graphs with a given degree sequence.
	
	The heart of  \Brute\ includes a scheme for computing the numbers $N(E(v))$ efficiently enough for our purposes. This is slightly complicated by the requirement that the  total number of edges contained in multiple edges must be greater than a given parameter $\totmultcut$.  Consequently we will consider an additional parameter $t$, being the total multiplicity of multiple edges.  We will also  use  a divide-and-conquer  scheme  to compute $N(E(v))$. This is more efficient  than from brute-force  enumeration of all possibilities, which would be too slow. It is  also more efficient  than recursive computations analogous to the generation scheme outlined above, which would require either too much time, or too much space if the required values were stored after being computed. After describing such a computation scheme for $N(E(v))$, we will show how to sample $E(v)$ with probability proportional to $N(E(v))$ within the required run time.
	
	 Recall that $\sum_{i=1}^{m} s_i=\sum_{i=1}^{n} t_i\ = M$, define the set  
$$
\mathcal{D}_1=\left\{ (g_1, \ldots, g_m, h_1, \ldots, h_n) \middle| \begin{array}{c}
	0\leq g_1\leq \cdots \leq g_m \leq \Delta, \quad 0\leq h_1\leq \cdots \leq h_n\leq \Delta,\\
	\displaystyle{\sum_{i=1}^{m} g_i=\sum_{i=1}^{n} h_i\leq M}
\end{array}\right\}
$$  
	and note that  $|\mathcal{D}_1|\leq \binom{n+\Delta}{\Delta}\binom{m+\Delta}{\Delta}$.
	Also define $\mathcal{D}$ to be  the set of all possible sequences $\textbf{d}=(\textbf{g},\textbf{h}; \totmult)$, where $(\textbf{g}, \textbf{h})\in \mathcal{D}_1$ and $0 \leq \totmult \leq M/2$.  Then $\D$ contains the set of all possible values of $(\bfg, \bfh; \totmult)$ where $(\bfg,\bfh)$ is the bi-degree sequence of a bipartite multigraph with maximum degree $\Delta$, degrees in nondecreasing order, at most $M$ edges, and  total multiplicity $\totmult$ of multiple edges. We only need to consider bi-degree sequences from $\mathcal{D}$ because permuting vertex degrees in one part of the multigraph does not affect the counts of multigraphs. 
	We have $$|\mathcal{D}| \leq \binom{n+\Delta}{\Delta} \binom{m+\Delta}{\Delta} (M/2+1) \le M(2nm)^\Delta.$$ 
	
	Given a bipartite multigraph $G$, let $\totmult(G)$ be the total multiplicity of multiple edges in $G$, and for   $\bfd =(\bfg,\bfh;\totmult)  \in \mathcal{D}$ define $\M(\bfd)=\M(\bfg,\bfh;\totmult)$ to be the set of bipartite  multigraphs $G$ with bi-degree sequence $(\bfg,\bfh)$ and $\totmult(G)=\totmult$.  Also let $N(\bfd)=|\M(\bfd)|$.

	We can now explain the divide-and-conquer approach of  calculating  the values $N(\bfd)$ recursively. It is rather simple:  for each multigraph $G\in\M(\bfd)$,  split the set $X$ into two parts, $X_1=\{1,2,\ldots, \lfloor m/2\rfloor\}$ and $X_2=[m]\setminus X_2$, and consider  the subgraphs $G_1$ and $G_2$ induced by $X_1\cup Y$  and $X_2\cup Y$ respectively. Let $(\bfg_1,\bfh_1)$ and $(\bfg_2,\bfh_2)$ denote the bi-degree sequences of $G_1$ and $G_2$ respectively, and let $\totmult_1=\totmult(G_1)$, $\totmult_2=\totmult(G_2)$. Note that $(\bfg_1,\bfg_2)$ are precisely determined by $\bfg_1=(g_1,\ldots, g_{\lfloor m/2\rfloor})$, $\bfg_2=(g_{\lfloor m/2\rfloor+1},\ldots, g_m)$,  and we also have  $\bfh_2=\bfh-\bfh_1$, $\totmult_2=\totmult-\totmult_1$.   We can   recursively compute how many possibilities there are for $G_1$, and for $G_2$, given the   parameters $\bfh_1$ and $t_1$, and moreover, such pairs of  graphs $(G_1,G_2)$ are in bijective correspondence with the possibilities for $G\in\M(\bfd)$.  Thus   	
	\bel{split}
	N(\bfd)=\sum_{\bfh_1, \totmult_1 }N(\bfg_1,\bfh_1;\totmult_1)N(\bfg_2,\bfh -\bfh_1;\totmult-\totmult_1).
	\ee
	
	Due to the large number of terms in the summation, this expression would be too slow for our purposes to calculate directly. Instead, we keep track of degree counts rather than degree sequences. The idea is to classify each possible $G_1$ according to the number $n_{ij}$ of vertices of degree $j$ in $G_1$ that have degree $i$ in $G$. For $i\in\{0,\ldots,\Delta\}$ let $n_i =|\{k\in[n]: h_k=i\}| $ be the frequency of $i$ in $\bfh$.
	Let $ {\bfp}=(n_{ij})_{0\le i,j\le \Delta}\in {\mathbb Z}_{\ge 0}^{(\Delta+1)^2}$ be such that $\sum_{j} n_{ij}=n_i$ for every $0\le i\le \Delta$. We call $\bfp$ a {\em multipartition matrix}. We say $\bfh_1$ {\em admits} the multipartition matrix $\bfp$ if  for all $i, j\in [0, \Delta]$ 
	\[
	|\{k\in[n]: h_{k}=i\ \mbox{and}\ ( \bfh_1)_k=j\}|=n_{ij}.
	\]
	It is easy to see that there are   
	$$ \prod_{ i=0}^{\Delta}\binom{n_i}{n_{i0},\ldots, n_{i\Delta}} =\prod^{\Delta}_{i=0}{n_i}!/\prod^{\Delta}_{j=0} n_{ij}!$$
	choices for $\bfh_1$ which admit the multipartition matrix $\bfp$. Moreover, for each such $\bfh_1$, the number of components equal to $j$ is determined as  $\sum_i n_{ij}$, and the number of components in $\bfh - \bfh_1$ equal to $j$ is determined as  $\sum_i n_{i(i-j)}$. Hence, each such $\bfh_1$ gives the same contribution to~\eqn{split}.  It thus suffices to consider a canonical representative $\bfh_1$ for each multipartition matrix $\bfp$, which we call $\bfh_1(\bfp)$, for instance the one in which for each $i$, the components whose indices are in $\{k: h_k=i\}$ are non-decreasing. Therefore, we may replace~\eqn{split} by
	$$
	N(\bfd)=\sum_{  \bfp,\totmult_1}N(\bfg_1,\bfh_1(\bfp),\totmult_1)N(\bfg_2,\bfh-\bfh_1(\bfp),\totmult-\totmult_1)\prod^{\Delta}_{ i=0}{n_i}!/\prod^{\Delta}_{j=0} n_{ij}!.
	$$
	This equation is used to recursively calculate $N(\bfd)$, with no storage of intermediate results. When $\bfd$ has $m=1$, the result is trivially computed as 0 or 1.
	\begin{lemma}\lab{l:Ncomplexity}
		For any $\bfd\in \mathcal{D}$, the value of $N(\bfd)$ can be computed with time complexity $O(\gamma)$ where
		$ 
		\gamma=  e^{ 3(\Delta+1)^2\log n \log m},
		$ 
		and using space complexity $O(M\log n)$.
	\end{lemma}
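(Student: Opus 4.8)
The plan is to analyze the recursion tree generated by the divide-and-conquer formula for $N(\bfd)$, bounding both its depth and its branching, and then bound the work done at each node. First I would observe that the recursion splits the vertex set $X$ of size $m$ into halves, so the depth of the recursion in the $X$-direction is $O(\log m)$; the base case $m=1$ is handled in $O(1)$ time. At each internal node of the recursion tree, the number of children is the number of pairs $(\bfp,\totmult_1)$ that appear in the sum: there are at most $M/2+1 = O(M)$ choices for $\totmult_1$, and the number of multipartition matrices $\bfp$ is at most $\prod_{i=0}^{\Delta}(n_i+1)^{\Delta+1} \le (n+1)^{(\Delta+1)^2}$, since each of the $(\Delta+1)^2$ entries $n_{ij}$ lies in $\{0,\ldots,n_i\}$ and the $n_i$ sum to $n$. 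Multiplying the per-node branching over the $O(\log m)$ levels gives a tree of size at most $\big(M(n+1)^{(\Delta+1)^2}\big)^{O(\log m)}$.

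**Bounding the node count and per-node cost.** The second step is to control this product carefully enough to land inside $\gamma = e^{3(\Delta+1)^2 \log n \log m}$. Writing $N(\bfd) = O(M)$ (crudely, $M \le n^2$ or we can fold $M$ into $n$ using $M=O(mn\Delta)$ and $\Delta \le n$), the total number of recursive calls is at most $\big((n+1)^{(\Delta+1)^2+O(1)}\big)^{c\log m} = e^{O((\Delta+1)^2 \log n \log m)}$; with the constant pinned down as in the statement this is $O(\gamma)$. At each node the local work — iterating over the $O(n^{(\Delta+1)^2})$ multipartition matrices and $O(M)$ values of $\totmult_1$, forming each canonical representative $\bfh_1(\bfp)$, computing the multinomial coefficient $\prod_i n_i!/\prod_j n_{ij}!$, and combining the two recursively returned integers — is itself $e^{O((\Delta+1)^2\log n)}$ times a polynomial factor (arithmetic on integers of $O(M\log\Delta)$ bits costs $\mathrm{poly}(M)$, absorbed into the exponential with room to spare). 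Folding this per-node cost into the bound on the number of nodes, and choosing the absolute constant $3$ (rather than the smaller constant the bookkeeping produces) to absorb all lower-order terms, yields total time $O(\gamma)$.

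**Space complexity.** For the space bound I would argue that the recursion is carried out depth-first, so only one root-to-leaf path of the recursion tree is active at any time; along this path there are $O(\log m)$ stack frames, each storing a bi-degree sequence of length $O(m+n)$ together with integers of $O(\log M)$ bits, plus the partially accumulated sum $N(\bfd)$ which is a single integer of $O(M)$ bits (since $N(\bfd) \le |\M(\bfs,\bft)| \le \Delta^{mn}$, so $\log N(\bfd) = O(mn\log\Delta) = \mathrm{poly}(M)$; in fact a cruder count of at most $M^M$ multigraphs already gives $O(M\log M)$ bits). The dominant term is the accumulator plus the degree-sequence data on the stack, giving $O(M\log n)$ total, as claimed. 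One must take care that no intermediate $N$-values are cached — the statement explicitly recomputes rather than memoizes, which is exactly what keeps the space polynomial in $M$ at the cost of the exponential time.

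**Main obstacle.** The delicate point is not the recursion-tree structure but the arithmetic: pinning down that the number of multipartition matrices is genuinely at most $(n+1)^{(\Delta+1)^2}$ (not, say, $\binom{n+\Delta}{\Delta}^{\Delta+1}$, which is weaker), and then checking that the product over $O(\log m)$ levels, times the per-node polynomial overhead, still fits under $e^{3(\Delta+1)^2\log n\log m}$ with the specific constant $3$. I expect the constant to be comfortably achievable — a factor of $2$ handles the splitting depth $\log_2 m$ versus $\log m$, and the remaining slack absorbs the $M$ and $\mathrm{poly}(M)$ factors — but verifying this inequality precisely, rather than with an unspecified $O(\cdot)$ in the exponent, is where the real care is needed.
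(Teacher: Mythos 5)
Your overall strategy --- a recursion tree of depth $O(\log m)$ with branching at most $O\big(M(n+1)^{(\Delta+1)^2}\big)$, polynomial-time big-integer arithmetic at each node, no memoization, and a depth-first space analysis dominated by integers with $O(M\log n)$ bits --- is exactly the paper's argument. The genuine gap is that you stop precisely at the step that constitutes the lemma: you never verify that the accumulated exponent fits under $3(\Delta+1)^2\log n\log m$, and you yourself flag this as ``where the real care is needed.'' The paper closes it by invoking the standing hypotheses $5\Delta^4<M$ and $\Delta\ge 2$: these give $\min\{m,n\}\ge 5\Delta^3\ge 40$ and $M\le\min\{m^{4/3},n^{4/3}\}$, hence $\log M\le\tfrac43\log n$ and $1+\log_2 m\le\big(\tfrac13+\tfrac32\big)\log m=\tfrac{11}{6}\log m$, so the total cost $O\big((n^{(\Delta+1)^2}M)^{1+\log_2 m}M^2\big)$ has exponent at most $\big(\tfrac{11}{6}(\Delta+1)^2+\tfrac{10}{3}\big)\log n\log m\le 3(\Delta+1)^2\log n\log m$, using $(\Delta+1)^2\ge 9$. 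Without these consequences of the degree assumption your shortcuts are unavailable: ``$\Delta\le n$'' and ``$M\le n^2$'' are false for general bipartite multigraph degree sequences (take $m=n=1$ with one edge of huge multiplicity) and hold here only because $5\Delta^4<M$, which your write-up never invokes.

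A smaller but real slip is in the space bound: $N(\bfd)\le\Delta^{mn}$ gives only $O(mn\log(\Delta+1))$ bits, which is not $O(M\log n)$ in general, and your initial claim of ``$O(M)$ bits'' is also off. The bound the paper uses is $N(\bfg,\bfh;t)\le n^{M}$ (each vertex of $X$ chooses a multiset of neighbours in $Y$), which yields $O(M\log n)$ bits directly; your fallback $M^{M}$ works too, but converting $O(M\log M)$ into $O(M\log n)$ again needs $\log M=O(\log n)$, i.e.\ the same degree assumption. With these repairs your proof coincides with the paper's.
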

	
	\begin{proof} Simple arithmetic computations like computing $n_i$, $\prod^{\Delta}_{ i=0 }{n_i}!/\prod^{\Delta}_{ j=0 } n_{ij}!$, 
		addition and multiplication, take $O(M^2)$ time. A similar bound applies easily to the average time required to find the next  (in   lexicographic order)  multipartition matrix $\bfp$.  As $\bfp\in {\mathbb Z}_{\ge 0}^{ (\Delta+1)^2 }$ and each entry of $\bfp$ is at most $n$, there are at most $ n^{(\Delta+1)^2 }$ possible values for $\bfp$, and there are at most $t\le M$ choices of $t_1$. Therefore each step of recursion branches into at most $n^{ (\Delta+1)^2} M$ new steps. As the depth of the recursion is at most $1+\log_2m$,  the total time complexity of calculating $N(\bfd)$ is  
				$$
		O((n^{(\Delta+1)^2}M )^{1+(\log m)/(\log 2) }M^2).$$
 Recall that we have assumed that $5\Delta^4<M$ and $\Delta\geq 2$. It is also immediate that $M\leq \Delta \min\{m,n\}$. From this we can derive $\min\{m,n\}\geq 5\Delta^3\geq 40$ and $M\leq \min\{m^{\frac{4}{3}},n^{\frac{4}{3}}\}$. Finally, using $\log 2>2/3$, $(\Delta+1)^2\geq 9$, and $1/\log m <1/\log 40<1/3$, the above bound on the  total time complexity of calculating $N(\bfd)$ is  
 $O(e^{(\frac{11}{6}(\Delta+1)^2+10/3)\log n \log m}) = O(\gamma)$.

		For the space complexity, it requires $O((m+n)\log (\Delta+1))=O(M\log n)$  space to store $\bfd$ as each entry in $(\bfg,\bfh)$ is at most $\Delta$ and $t\le M \leq \Delta n$.  It requires $ O(\Delta^2\log n)=O(M\log n)$ space to store $\bfp$ as each entry in $\bfp$ is at most $n$. We can easily bound $N(\bfg,\bfh; t)$ by $n^{ M}$. Performing arithmetic computations such as addition and multiplication over numbers of size at most $n^{ M }$ takes $O( M  \log n)$ space. Thus the total space complexity for computing $N(\bfg,\bfh;t)$ is bounded by  $O(M\log n)$.  \qed
	\end{proof}
	\medskip
	
	We now define how \Brute, with input $(\bfs,\bft; \totmultcut)$,   samples a  member of $\bigcup_{\totmult\ge \totmultcut} \M(\bfs,\bft;\totmult)$, i.e.\ a bipartite multigraph with  bi-degree sequence
	$(\bfs ,\bft)$ and at least $\totmultcut$ edges contained in multiple edges, uniformly at random. 
	
	First, \Brute\  calculates the quantity 
	$$
	R:= \sum_{\textbf{m}\in S^+_{\totmultcut}}|\mathcal{H}_{\textbf{m}} | = \sum_{\totmult'\ge \totmultcut} N(\bfs ,\bft ;\totmult'),
	$$
	(without bothering to store the evaluations of the function $N$) and  
	chooses   a random $\totmult\ge \totmultcut$ with probability proportional to $N(\bfs ,\bft ;\totmult) $, that is, with probability
	$$
	\frac{N(\bfs ,\bft ;\totmult) }{R }.
	$$
	
	The main part of \Brute\ is then begun. It uses a (recursive) subprocedure, \sbrute,   with input parameters $(\bfs ,\bft ;\totmult)$, to sample a random member of $\M(\bfs,\bft;\totmult)$. This begins with the vertex sets $X$ and $Y$, but no edges.
	For $x\in[\Delta]$ define the set  
	$$
	\mathcal{A}(x)=\{(a_1, \ldots, a_n)\; | \; a_i\in\{0,1,\ldots, \Delta\}, \; \sum_{i=1}^{n}a_i=x\}.
	$$
	First, if $X\ne \emptyset$, \sbrute$(\bfs ,\bft ;\totmult)$ chooses a  vertex $v\in X$. (If $X$ is empty, which is the base case, it returns the graph with empty edge set.) Let $x=d(v)$, the degree of $v$. Then the set $\mathcal{A}(x)$ corresponds to all the  possible placements of edges incident with  $v$, where $a_i$ stands for the multiplicity of the edge  between $v$ and $i\in Y$. As every $\bfa\in \A(x)$ has at most $\Delta$ non-zero entries, and each non-zero entry is at most $\Delta$, it follows that $|\A(x)|\le \binom{n}{\Delta}\Delta^\Delta$. Let $\tau(\bfa)=\sum_{i:a_i\ge 2}a_i$, which corresponds to the total multiplicity of the edges incident with $v$, and let $\bfs_v$ denote the  sequence of length $n-1$ obtained from $\bfs$   by removing $v$'s entry $x$.

	Next, \sbrute\   chooses any $v\in X$ and chooses a random sequence $\bfa\in \mathcal{A}(d(v))$ with probability proportional to $N(\bfs_v,\bft-\bfa;\totmult-\tau(\bfa))$, that is, with probability
	$$
	\frac{N(\bfs_v,\bft-\bfa;\totmult-\tau(\bfa))}{ N(\bfs ,\bft ;\totmult )}.
	$$
	It  inserts the edges incident with $v$  according to $\bfa$.
	It  then finishes the generation of the graph by  generating a uniformly random graph on the vertex set $(X\setminus\{v\}, Y)$, by recursively calling \sbrute$(\bfs_v,\bft-\bfa;\totmult-\tau(\bfa))$. It is easy to see that by induction this results in the generation of a member $G$ of $\M(\bfs,\bft;\totmult)$ uniformly at random, i.e.\ with probability $ N(\bfs ,\bft ;\totmult )^{-1}$.
	
	After \sbrute\ has finished its job, \Brute\ reasserts control and
	with probability  
	$$
	\frac{R}{ N(\bfs ,\bft ;0 )(1+\beta_{\bf0})}\left(\frac{1-\rho}{\rho}\right)
	$$
	it outputs $G$, and otherwise restarts \MatrixGen. Here $\beta_{\bf0}$ was defined in Section~\ref{sec:parameters}.
	The above probability is well defined because
	$$
	\frac{R}{ N(\bfs ,\bft ;0 )(1+\beta_{\bf{0}})}\left(\frac{1-\rho}{\rho}\right)
	= \frac{R}{|\mathcal{H}_{\textbf{0}}|B} 
	= \frac{|\bigcup_{\textbf{m}\in S^+_{\totmultcut}}\mathcal{H}_{\textbf{m}}|}{|\mathcal{H}_{\textbf{0}}|B}\le 1 
	$$
	by Lemma~\ref{lemma:tail}.

	\begin{thm}\lab{thm:Brute}
		 Each call of \Brute\ generates each member of $\bigcup_{\totmult\ge \totmultcut} \M(\bfs,\bft;\totmult)$
		with probability $1/B|\mathcal{H}_{\textbf{0}}|$, and has time complexity at most $ M^{O(\Delta^2 \log M)}$ and space complexity $O(M\log n)$.  
	\end{thm}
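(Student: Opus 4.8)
The plan is to treat one call of \Brute\ as the three-stage procedure described above — compute $R$ and draw $\totmult$; run \sbrute; accept with the stated probability or restart \MatrixGen\ — and to verify the exact output probability, the time bound, and the space bound in turn. The heart of the uniformity claim is the correctness of \sbrute, which I would isolate as a separate induction on the number $m'$ of vertices of $X$: \sbrute$(\bfs',\bft';\totmult')$ returns each $G\in\M(\bfs',\bft';\totmult')$ with probability exactly $1/N(\bfs',\bft';\totmult')$. The base case $m'=0$ is immediate, since $\M$ is then nonempty only if $\bft'=\mathbf 0$ and $\totmult'=0$, in which case it consists of the empty graph alone. For the inductive step, fix the chosen $v\in X$ with $x=d(v)$: the map $G\mapsto(\bfa_G,G-v)$, where $\bfa_G$ records the multiplicities of the edges at $v$ and $G-v$ is the induced subgraph on $(X\setminus\{v\},Y)$, is a bijection from $\M(\bfs',\bft';\totmult')$ onto $\{(\bfa,H):\bfa\in\A(x),\ H\in\M(\bfs'_v,\bft'-\bfa;\totmult'-\tau(\bfa))\}$; the two points to check are that $\bfa_G\in\A(x)$ (the multiplicities at $v$ sum to $x\le\Delta$) and that total multiplicities add, i.e.\ $\totmult(G)=\tau(\bfa_G)+\totmult(G-v)$, because the edges at $v$ are disjoint from the rest. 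This bijection gives the one-vertex identity $N(\bfs',\bft';\totmult')=\sum_{\bfa\in\A(x)}N(\bfs'_v,\bft'-\bfa;\totmult'-\tau(\bfa))$, so the weights with which \sbrute\ draws $\bfa$ are well defined and sum to $1$; multiplying the probability of drawing $\bfa=\bfa_G$ by the inductive probability $1/N(\bfs'_v,\bft'-\bfa_G;\totmult'-\tau(\bfa_G))$ of generating $G-v$ yields exactly $1/N(\bfs',\bft';\totmult')$.

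Given this, for $G$ with $\totmult(G)=\totmult^*\ge\totmultcut$ the probability that a single call of \Brute\ outputs $G$ is the product of $N(\bfs,\bft;\totmult^*)/R$ (drawing $\totmult=\totmult^*$), $1/N(\bfs,\bft;\totmult^*)$ (\sbrute\ returns $G$), and the acceptance probability $\frac{R}{N(\bfs,\bft;0)(1+\beta_{\textbf0})}\cdot\frac{1-\rho}{\rho}$; the first two factors cancel against each other and against $R$, leaving $\frac{1}{N(\bfs,\bft;0)(1+\beta_{\textbf0})}\cdot\frac{1-\rho}{\rho}$. Now $N(\bfs,\bft;0)=|\mathcal{H}_{\textbf0}|$ because a bipartite multigraph with bi-degree sequence $(\bfs,\bft)$ and zero total multiplicity is precisely a simple bipartite graph with that degree sequence, and from the definition of $\rho$ one reads off $\frac{1-\rho}{\rho}=\frac{1+\beta_{\textbf0}}{B}$, so the probability collapses to $1/(B|\mathcal{H}_{\textbf0}|)$, independent of $G$ and of $\totmult^*$; this gives both exactness and uniformity. (That the acceptance probability lies in $[0,1]$ is exactly $R/(B|\mathcal{H}_{\textbf0}|)\le 1$, i.e.\ Lemma~\ref{lemma:tail}, using $\bigcup_{\bfm\in S^+_{\totmultcut}}\mathcal{H}_{\bfm}=\bigcup_{\totmult'\ge\totmultcut}\M(\bfs,\bft;\totmult')$.)

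For the time bound, computing $R$ needs $N(\bfs,\bft;\totmult')$ for each $\totmultcut\le\totmult'\le M/2$, i.e.\ $O(M)$ evaluations, each costing $O(\gamma)$ with $\gamma=e^{3(\Delta+1)^2\log n\log m}$ by Lemma~\ref{l:Ncomplexity}. In \sbrute\ each of the $m$ vertices of $X$ is handled by evaluating $N$ once for every $\bfa\in\A(d(v))$, i.e.\ at most $m\cdot|\A(\Delta)|\le m\binom{n}{\Delta}\Delta^{\Delta}$ further evaluations, plus $O(|\A(\Delta)|)$ arithmetic per vertex for the weighted draw and $O(M)$ bookkeeping for the partial graph and its residual degrees. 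Since all degrees are positive, $m,n\le M$ and $\Delta\le M$, so $\log n\log m\le(\log M)^2$ gives $\gamma\le M^{3(\Delta+1)^2\log M}=M^{O(\Delta^2\log M)}$, and $\binom{n}{\Delta}\Delta^{\Delta}\le M^{2\Delta}$ gives $|\A(\Delta)|=M^{O(\Delta)}$; multiplying, every contribution — including the $O(M)$-time precomputation of $\beta_{\textbf0}$ — is $M^{O(\Delta^2\log M)}$, which is the claimed bound.

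For the space bound, Lemma~\ref{l:Ncomplexity} says each evaluation of $N$ uses $O(M\log n)$ space, and since \Brute\ performs these one at a time without storing the results, this space is reused; the running total $R$ and any intermediate count is at most the number of bipartite multigraphs with at most $M$ edges, hence at most $(mn)^M$, so $O(M\log n)$ bits suffice, and the partially built graph in \sbrute\ has at most $M$ edges each recorded by two endpoints and a multiplicity in $[\Delta]$, again $O(M\log n)$. The one delicate point is the weighted draw over $\A(d(v))$: to avoid tabulating all $|\A(d(v))|$ weights, \sbrute\ should draw a uniform integer below the current count $N(\bfs,\bft;\totmult)$ (already available from the previous stage) and scan the $\bfa\in\A(d(v))$ in a fixed order maintaining only a running partial sum, so that only $O(M\log n)$ bits are ever live. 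I expect this streamed-sampling observation, together with the routine but error-prone check that the one-vertex decomposition map is a genuine bijection respecting the total-multiplicity parameter, to be the only real obstacles; the rest is accounting.
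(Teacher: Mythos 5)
Your proposal is correct and follows essentially the same route as the paper: the output probability is the product of the three stage probabilities $N(\bfs,\bft;\totmult^*)/R$, $1/N(\bfs,\bft;\totmult^*)$, and $R/(B|\mathcal{H}_{\textbf{0}}|)$, and the time and space bounds are assembled from Lemma~\ref{l:Ncomplexity} exactly as in the paper. The only difference is that you spell out the induction establishing the correctness of \sbrute\ (which the paper asserts as easy in its description of the procedure) and the streamed implementation of the weighted draw; both are fine.
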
 
	\begin{proof}
		\Brute\ selects the number $t$ with probability $N(\bfs ,\bft ;\totmult) /R$ and then \sbrute\ generates each member of   $\M(\bfs,\bft;\totmult)$ with probability $1/N(\bfs ,\bft ;\totmult)$, which is then accepted with probability $R/B|\mathcal{H}_{\textbf{0}}|$. The product of these, $1/B|\mathcal{H}_{\textbf{0}}|$, is the probability that any given member of $\bigcup_{\totmult\ge \totmultcut} \M(\bfs,\bft;\totmult)$ is generated in a given call of \Brute.
		
		Turning to the time complexity, \Brute\ first computes $R$ in time $O(M\gamma)$ considering the bound for  $N(\bfs ,\bft ;\totmult')$ given in Lemma~\ref{l:Ncomplexity}. Then each call of the recursive procedure \sbrute\ needs to evaluate $N(\bfs_v,\bft-\bfa;\totmult-\tau(\bfa))$ for each $\bfa\in \mathcal{A}(d(v)) $. As we observed earlier, $|\A(v)|\le \binom{n}{\Delta}\Delta^\Delta\le (ne )^\Delta$, and so these evaluations require time $O(\gamma (ne )^\Delta)$.  There are $n-1$ calls of the recursion, so the overall time required by \sbrute\ is $O(n\gamma (ne )^\Delta)$, which subsumes the time required to compute $R$ and also $N(\bfs ,\bft ;0)$ in the last step of \Brute. Since $M\ge n$, this is at most $M^{O(\Delta^2 \log M)}$, as required.
		
		Lastly, we consider space complexity. By Lemma~\ref{l:Ncomplexity}, computing  $N(\bfd)$ requires $O(M\log n)$ space, and as seen in the proof of that lemma, this is sufficient to store numbers of this size. It is easy to check that there are no other significant space requirements. \qed

	\end{proof}
	
	
	\section{Proof of Theorem~\ref{thm:matrix}}\lab{sec:proofmain}
	 We first show that \MatrixGen\ generates a uniformly random multigraph from $\mathcal{M}(\bfs,\bft)$. Indeed, for a multigraph $G^\prime$ that has total multiplicity of multiple edges less than $t_0$, probability that $G^\prime$ is an output of \MatrixGen\ is $\frac{1-\rho}{(1+\beta_0)|\mathcal{H}_0|}$ by Theorem~\ref{thm:uniformgen}. Similarly, for $G^\prime$ that has total multiplicity of multiple edges at least $t_0$, the probability that $G^\prime$ is an output of \MatrixGen\ is $\frac{\rho}{B|\mathcal{H}_0|}$ by Theorem~\ref{thm:Brute}. It remains to notice that $\frac{1-\rho}{1+\beta_0}=\frac{\rho}{B}$ by definition of $\rho$ in Section~\ref{sec:parameters}, so \MatrixGen\ is a uniform sampler.   
	
	For the upper bound on the runtime of \MatrixGen, we bounded the  time required for generating a uniformly random simple bipartite graph, the number of switching steps of \Gen, the  time required in each switching step, and the contribution from \Brute. When estimating time complexity we assume that it takes $O(1)$ for arithmetic operations in \Gen,  however when estimating space complexity in \Brute, we potentially deal with large numbers and take into the account the space required to store those numbers. The  time taken to  compute with such large numbers does not affect the expected runtime estimates because there is  such  a low probability of calling \Brute.  
	
	 To complete the analysis, the following lemma shows that the probability of rejection happening during a single run of \Gen\ is bounded away from zero. Thus,  \Gen\  restarts a constant number of times in expectation. 
	\begin{lemma}\label{lemma:rej}
			 For some constant $c>0$, when   $M$ is sufficiently large the probability that none of  f-, b-, or $\beta$-rejection happens during a single run of \Gen\ is at least $c$. 
	\end{lemma}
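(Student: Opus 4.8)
Here is the plan.

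\medskip

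\emph{Reduction to a bound on $\beta_{\textbf 0}$.} By Theorem~\ref{thm:uniformgen}, \Gen\ outputs each multigraph of $\bigcup_{\bfm\in S^-_{\totmultcut}}\H_{\bfm}$ with probability exactly $1/\big((1+\beta_{\textbf 0})|\H_{\textbf 0}|\big)$, and in all other cases produces no output; hence the event ``none of f-, b-, $\beta$-rejection occurs'' is exactly the event ``\Gen\ produces an output'', so
\[
\pr(\text{no rejection})=\frac{\big|\bigcup_{\bfm\in S^-_{\totmultcut}}\H_{\bfm}\big|}{(1+\beta_{\textbf 0})\,|\H_{\textbf 0}|}.
\]
Since $\H_{j\bfe_2}\subseteq\bigcup_{\bfm\in S^-_{\totmultcut}}\H_{\bfm}$ whenever $2j<\totmultcut$, it is enough to produce an absolute constant $C$ with $1+\beta_{\textbf 0}\le C\sum_{j\ge 0}|\H_{j\bfe_2}|/|\H_{\textbf 0}|$.

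\medskip

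\emph{Unrolling $\beta_{\textbf 0}$.} Put $\gamma_m:=\beta_{m\bfe_2}$. For $k\ge 3$ the vector $m\bfe_2+\bfe_k$ has $\ell=k$, so $\beta_{m\bfe_2+\bfe_k}$ is the explicit value $4\Delta^{2k-2}/(M^{k-2}\eps^k)$ (or $-1$), and $\underline b_k(m\bfe_2+\bfe_k)=(\eps M)^k$; thus the total contribution of the indices $k\ge 3$ to the sum defining $\gamma_m$ is at most the $m$-independent quantity $C_1:=\sum_{k=3}^{\Delta}\frac{S_kT_k}{(\eps M)^k}\big(1+\frac{4\Delta^{2k-2}}{M^{k-2}\eps^k}\big)$, and bounding $S_kT_k\le(\Delta^{k-1}M)^2$ and using $\eps^3>4\Delta^4/M$ together with $5\Delta^4<M$ gives $C_1<1$. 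Consequently $\gamma_m\le C_1+r_m(1+\gamma_{m+1})$ where $r_m:=\overline f_2(m\bfe_2)/\underline b_2((m+1)\bfe_2)=S_2T_2/\underline b_2((m+1)\bfe_2)$, and a downward induction on $m$ (starting from the largest $m$ with $2m<\totmultcut$, where the $k=2$ term vanishes since $\beta_{(m+1)\bfe_2}=-1$) yields
\[
\beta_{\textbf 0}=\gamma_0\le C_1+(1+C_1)\sum_{j\ge1}P_j,\qquad P_j:=\prod_{l=0}^{j-1}r_l .
\]
If $S_2T_2=0$ then $\beta_{\textbf 0}=0$ and we are done; otherwise let $\lambda:=S_2T_2/M^2\in(0,\Delta^2]$. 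If $\lambda$ is below a suitable fixed constant, then $r_l\le\lambda/\big((l+1)\eps^2\big)$ gives $\sum_{j\ge1}P_j\le e^{\lambda/\eps^2}-1=O(1)$ (as $\eps=\Omega(1)$), so $\beta_{\textbf 0}=O(1)$ and $\pr(\text{no rejection})\ge 1/(1+\beta_{\textbf 0})$ is bounded below. So it remains to treat large $\lambda$.

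\medskip

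\emph{The estimate for large $\lambda$.} The key is a comparison of $r_l$ with the true ratio $|\H_{(l+1)\bfe_2}|/|\H_{l\bfe_2}|$. Counting pairs (graph, $2$-switching) in two ways gives $|\H_{l\bfe_2}|\cdot\ex f_2=|\H_{(l+1)\bfe_2}|\cdot\ex N_2$, where $N_2(G')$ is the number of $2$-switchings producing $G'$ and the expectations are over the two uniform distributions; directly $N_2(G')\le(l+1)M^2$, and Lemma~\ref{lemma:tbounds} gives $\ex f_2\ge S_2T_2(1-\delta_l)$ with $\delta_l=\Delta(S_2+T_2)(4l+2\Delta+1.5\Delta^2)/(S_2T_2)$. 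Combining these with the formula for $\underline b_2((l+1)\bfe_2)$ yields $r_l\le q_l\,|\H_{(l+1)\bfe_2}|/|\H_{l\bfe_2}|$, where $q_l=M^2\big/\big((1-\delta_l)(M-2(l+1)-2\Delta-2\Delta^2)(M-2(l+1)-4\Delta-2\Delta^2)\big)$. For $l\le 3\lambda/\eps^2$, using $S_2,T_2\ge\lambda M/\Delta$, $\lambda\le\Delta^2$ and $5\Delta^4<M$ one shows $\delta_l<\tfrac12$ (for $\lambda$ large), $\sum_{l<j}\delta_l=O(\eps^{-4})$ and $\sum_{l<j}(2(l+1)+4\Delta+2\Delta^2)/M=O(\eps^{-4})$, hence $\prod_{l<j}q_l=e^{O(1)}$; these $\H_{j\bfe_2}$ are nonempty by Lemma~\ref{lemma:tbounds} (and $2j=o(M)<\totmultcut$), so $P_j\le e^{O(1)}|\H_{j\bfe_2}|/|\H_{\textbf 0}|$ for all $j\le 3\lambda/\eps^2$. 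For $j>3\lambda/\eps^2$ one has $r_l\le\lambda/\big((l+1)\eps^2\big)\le\tfrac13$, so $\sum_{j>3\lambda/\eps^2}P_j\le\tfrac12 P_{\ceil{3\lambda/\eps^2}}\le e^{O(1)}|\H_{\ceil{3\lambda/\eps^2}\bfe_2}|/|\H_{\textbf 0}|$. Adding the two ranges and using $C_1<1$ and $\sum_{j\ge0}|\H_{j\bfe_2}|/|\H_{\textbf 0}|\ge 1$ gives $1+\beta_{\textbf 0}=O\big(\sum_{j\ge0}|\H_{j\bfe_2}|/|\H_{\textbf 0}|\big)$, which is what was needed.

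\medskip

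\emph{Where the difficulty lies.} The substantive part is the large-$\lambda$ case, and within it the estimate $\prod_{l<j}q_l=e^{O(1)}$ over the ``bulk'' $j\le 3\lambda/\eps^2$: one must verify that the multiplicative errors $q_l$ in the comparison $r_l\lesssim|\H_{(l+1)\bfe_2}|/|\H_{l\bfe_2}|$ do not compound beyond a constant, which is exactly where the lower bound on $f_2$ in Lemma~\ref{lemma:tbounds} and the hypothesis $5\Delta^4<M$ are used in a quantitative way, together with the fact that $r_l$ drops below $\tfrac13$ beyond the bulk so that the tail of $\sum_jP_j$ is geometric. All the remaining pieces — $C_1<1$, the $\lambda=O(1)$ case, nonemptiness of the strata $\H_{j\bfe_2}$, and checking that every $j\bfe_2$ named above lies in $S^-_{\totmultcut}$ — are routine computations of the type already appearing in the appendices.
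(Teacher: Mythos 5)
Your proposal is correct, but it takes a genuinely different route from the paper. The paper argues along the trajectory of the algorithm: it splits a run of \Gen\ into the phase where only double edges are present and the phase where some higher-multiplicity edge exists, shows (via Lemma~\ref{lemma:largem_0}) that with probability at least $1/2$ the first phase lasts only $O(S_2T_2/\eps^2M^2)$ steps, multiplies the per-step f- and b-acceptance probabilities over that many steps to get $\exp(-O(\Delta^4/M))=\Omega(1)$, notes that $\beta$-rejection cannot occur while $\ell(\bfm)=2$, and bounds the probability of ever entering the second phase by $3\Delta^4/\eps^3M\le 3/4$. You instead work at the aggregate level: using Theorem~\ref{thm:uniformgen} you identify $\pr(\text{no rejection})$ with $\bigl|\bigcup_{\bfm\in S^-_{\totmultcut}}\H_\bfm\bigr|/\bigl((1+\beta_{\bf0})|\H_{\bf0}|\bigr)$ (valid, since the loop can only exit by output or rejection, as the paper notes when discussing $\beta_{\bfm+\bfe_k}=-1$), and then show that the recursively defined $\beta_{\bf0}$ — which Lemma~\ref{lem:beta} only bounds from below by $|\H^+_{\bf0}|/|\H_{\bf0}|$ — is in fact tight up to a constant factor, by comparing each ratio $\UB_2(l\bfe_2)/\LB_2((l+1)\bfe_2)$ with the true ratio $|\H_{(l+1)\bfe_2}|/|\H_{l\bfe_2}|$ via the standard switching double-count and controlling the accumulated multiplicative error $\prod_l q_l=e^{O(1)}$. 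The two arguments have essentially the same computational core (your $\prod_{l<j}(1-\delta_l)^{-1}(1-a_l)^{-1}(1-b_l)^{-1}$ is the paper's product of acceptance probabilities in disguise, and your geometric tail beyond $l\approx 3\lambda/\eps^2$ plays the role of Lemma~\ref{lemma:largem_0}), but they are packaged differently: the paper's version is more elementary and reuses Lemma~\ref{lemma:largem_0}, which is needed anyway for the runtime analysis, while yours isolates the conceptual content — the success probability of \Gen\ is exactly a ratio of counts, so all that matters is that the parameters $\beta_{\bfm}$, $\UB$, $\LB$ are tight within constants — at the cost of invoking the uniformity theorem and a slightly more delicate stratum-comparison estimate. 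Both are complete; your indexing around the cutoff $j_0=\lceil 3\lambda/\eps^2\rceil$ and the nonemptiness induction for $\H_{j\bfe_2}$ should be written out, but these are routine as you indicate.
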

		The proof of this lemma is quite cumbersome and is postponed to Appendix A2. 
		
		In view of  Lemma~\ref{lemma:rej}, we only need to estimate the runtime of (a single instance of) \Gen, which is $O(M)$ by Theorem~\ref{thm:GenTimeSpace}. Hence, \Gen\ contributes at most $O(M)$ to the time complexity of \MatrixGen.

		\Brute\  runs in superpolynomial time if ever called. However,  the probability $\rho$ that  \Brute\ is ever called is bounded by $B$ which, according to Remark~\ref{remark:B}, is at most $M^{-\Omega(M)}$. The runtime of \Brute\ is at most $M^{O(\Delta^2 \log M)}$, as shown in Theorem~\ref{thm:Brute}, so the contribution of \Brute\ to the expected runtime of \MatrixGen\ is $o(1)$. Thus the expected runtime for \MatrixGen\ is $O(M)$.
	
		As for the space complexity, from Theorem~\ref{thm:GenTimeSpace} the space complexity of \Gen\ is $O(mn\log \Delta)$. For \Brute, as proved in Theorem~\ref{thm:Brute}, the space complexity is at most $O(M \log n)$. 
	Hence the space complexity of \MatrixGen\ is $O(mn\log \Delta)$.
	
	\section{Algorithm \GraphGen}
	\lab{sec:multigraph}  
	
	The algorithm \MatrixGen\ can be modified for generation of multigraphs with given degrees $\bfs=(s_1,\ldots,s_n)$. 
	Let $\Delta$ denote the maximum degree of $\bfs$, and define
	\begin{align*}
	M&=\sum_{i\in [n]} s_i; \quad \quad M_k=\sum_{i\in [n]} (s_i)_k \quad \text{for all $k\ge 2$.}
	\end{align*}
	
	We modify the definition of $t$-switching by no longer requiring its first condition, which was the one ensuring that the chosen vertices came from appropriate sides of the bipartition. The parameters $\totmultcut, \rho, \UB_k(\cdot), \LB_k(\cdot), \LB_k(\cdot, i)$ are redefined below. 
	
	\GraphGen\ first obtains a random simple graph with degree sequence $\bfs$ by calling the algorithm \SimpleGen\ from~\cite{arman19}. \SimpleGen\ is a linear-time algorithm which generates a uniformly random simple graph with degree sequence $\bfs$ when $\Delta=O(M^{1/4})$. After that, \GraphGen\ calls \Brute\ with probability $\rho$ and calls \Gen\ with probability $1-\rho$. No modifications of \Gen\ are needed except that the switchings do not need to respect to vertex bipartition, and the set of parameters in~(\ref{parameters}) require different specifications, which we give below. For \Brute,   straightforward changes have to be made in order to generate multigraphs instead of bipartite multigraphs. For instance, the changes affect the definition of set $\mathcal{D}$ and computation procedure for $N(\bfd)$.
	
	We set the parameters~\eqn{parameters} for \GraphGen\ as follows.
	
	If  possible,  choose the integer $\totmultcut>7$ such that for $\eps=1-(2\totmultcut+6\Delta^2)/M$ we have
	$$\eps^{3}> 2\Delta^4/M \qquad \mbox{and}\qquad 8\totmultcut(\totmultcut -\Delta^2-\Delta^3)\geq   (2\totmultcut+6\Delta^2)^2,$$ 
	and otherwise set $\totmultcut=0$. 
	
	For $k\geq 3$ and  $1\leq i\leq k$ set 
	$$ \underline{b}_{k}(\bfm,i)=\epsilon M, \qquad 	\underline{b}_{k}(\bfm,0)=2m_k, $$ 
	and for $1\leq i\leq 2$ set 
	\begin{align*}
	\underline{b}_{2}(\bfm,i)&=M\left(1-\frac{2S(\bfm)+4i\Delta+2\Delta^2}{M}\right),\quad 
	\underline{b}_{2}(\bfm,0)=2m_2.
	\end{align*} 
	As before 
	\begin{align*}
	\underline{b}_k(\bfm)&=\prod_{i=0}^{k}\underline{b}_k(\bfm,i)=2m_k\prod_{i=1}^{k}\underline{b}_k(\bfm,i),\quad
	\overline{f}_{k}(\bfm)=M_k^2.
	\end{align*}
	
	The paramaters $\beta_m$ are set to be $-1$ for $\bfm \not \in S_{\totmultcut}^-$. For $\bfm$ with $\ell(\bfm)=\ell\geq 3$ set
	$$\beta_{\bfm}=\frac{2\Delta^{2\ell-2}}{M^{\ell-2}\epsilon^{\ell}},$$
	and as before for $\bfm$ with $\ell(\bfm)=2$ set
	$$\beta_{\textbf{m}}=\sum_{i=2}^\Delta\frac{\overline{f}_i(\bfm)}{\underline{b}_i(\bfm+\bfe_i)}(1+\beta_{\bfm+\bfe_i}).$$
	
	For the parameter $\rho$ we use the same specification as in Section~\ref{sec:parameters}, with the exception that $S_2=T_2=M_2:$ 
	$$
	\rho=\frac{B \frac{1}{1+\beta_{\textbf{0}}}}{1+B\frac{1}{1+\beta_{\textbf{0}}}}, \quad\mbox{where~} B=4\left(\frac{3}{2(1-\epsilon)^2}+\frac{3}{4(1-\epsilon)^4}\right)^{\epsilon M\slash 2} \left(\frac{\Delta^2e}{2\epsilon^2(1-\epsilon)(\totmultcut-7)}\right)^{\totmultcut-7}.
	$$
	
	\section*{Appendix}\label{Appendix}

	\subsection*{A1.~Proofs of Lemmas~\ref{lem:probability}--\ref{lemma:tail}}\label{sec:proofoflemmas}
	
	{\em Proof of Lemma~\ref{lem:probability}. }
	\smallskip
	
	Given $\bfm \in S^-_{\totmultcut}$ it is convenient to define
	$$\beta^+_{\textbf{m}}=\sum_{i=2}^{\Delta}\frac{\overline{f}_i(\bfm)}{\underline{b}_i(\bfm+\bfe_i)}(1+\beta_{\textbf{m}+\textbf{e}_{i}}).$$
	Let $\ell(\textbf{m})=\ell$. If $\ell=2$, then statement follows from the definition of $\beta_{\textbf{m}}$ and in this case $\beta_\bfm=\beta_\bfm^+$.
	
	Assume $\ell \geq3$, then $\beta_{\textbf{m}}=\frac{4\Delta^{2\ell-2}}{\epsilon^\ell M^{\ell-2}}$, using $\ell \leq \Delta$ we get  
	\begin{align*}
	\beta^+_{\textbf{m}}&\leq \sum_{i=\ell}^\Delta \frac{S_iT_i}{(m_s+1)M^i\epsilon^i}\left(1+\frac{4\Delta^{2i-2}}{\epsilon^i M^{i-2}}\right)\\
	&\leq \sum_{i=\ell}^\Delta \frac{\Delta^{2i-2}}{\epsilon^i (m_s+1)M^{i-2}}\left(1+\frac{4\Delta^{2i-2}}{\epsilon^i M^{i-2}}\right)\\
	&\leq \frac{\Delta^{2\ell-2}}{\epsilon^\ell M^{\ell-2}}\left(1+4\frac{\Delta^{2\ell-2}}{\epsilon^\ell M^{\ell-2}}\right)+\sum_{i=\ell+1}^\Delta \frac{\Delta^{2i-2}}{\epsilon^i M^{i-2}}\left(1+\frac{4\Delta^{2i-2}}{\epsilon^i M^{i-2}}\right)\\
	&\leq 3\frac{\Delta^{2\ell-2}}{\epsilon^\ell M^{\ell-2}} \leq \beta_{\textbf{m}}.\qed
	\end{align*}
	
	\noindent{\em Proof of Lemma~\ref{lem:sub-b-bound}.}
	\smallskip
	
	We start with showing the following property of valid $i$-subsets. 
	\begin{claim}\label{claim:validsubset}
		Let $i\in[k+1]$ and $V_{i}=(u_1,v_1,\ldots,u_i,v_i)$ be an ordered subset of vertices of $G$ such that 
		\begin{itemize}
			\item $u_1\in X$, $v_1\in Y$, $u_1v_1$ is an edge of multiplicity $k$; 
			\item $u_j\in Y$, $v_j\in X$ and $u_jv_j$ is a single edge for all $j\in[2,i]$;
			\item there are no edges between $u_1$ and $u_j$, nor between $v_1$ and $v_j$ for $j\in[2,i]$.
		\end{itemize} Then $V_i$ is a valid $i$-subset of $G$ with respect to $k$-switchings. 
	\end{claim}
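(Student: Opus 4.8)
The plan is to prove the claim by completing the given partial tuple $V_i=(u_1,v_1,\dots,u_i,v_i)$ to the vertex tuple of a genuine $k$-switching that produces $G$; by the switching/anchored-graph equivalence recorded after Definition~\ref{d:switching}, this is exactly what it takes to certify that $V_i$ is a valid $i$-subset of $G$ with respect to $k$-switchings. The three displayed hypotheses of the claim, read with $[2,k+1]$ in place of $[2,i]$ and taken together with distinctness of all $2(k+1)$ vertices and the side conditions $u_1\in X$, $v_1\in Y$, $u_j\in Y$, $v_j\in X$ for $j\ge 2$, are precisely the conditions for $(u_1,v_1,\dots,u_{k+1},v_{k+1})$ to be the vertex tuple of a $k$-switching producing $G$ (the non-adjacencies between vertices on the same side of the bipartition being automatic). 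So if $i=k+1$ there is nothing to prove, and otherwise it suffices to choose the remaining layers $u_{i+1},v_{i+1},\dots,u_{k+1},v_{k+1}$ so that every layer-$j$ condition keeps holding.

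I would add the missing layers greedily, one index at a time. Suppose layers $1,\dots,j-1$ have been fixed, where $i\le j-1\le k$, and satisfy the layer-$(\le j-1)$ conditions. I then need a single edge of $G$, oriented as $(u_j,v_j)$ with $u_j\in Y$ and $v_j\in X$, such that $u_j\notin F_Y:=N_G(u_1)\cup\{u_2,\dots,u_{j-1}\}$ and $v_j\notin F_X:=N_G(v_1)\cup\{v_2,\dots,v_{j-1}\}$. Because $v_1\in N_G(u_1)$ and $u_1\in N_G(v_1)$, these exclusions also force $u_j\ne v_1$ and $v_j\ne u_1$, so (using the side conditions) all $2j$ chosen vertices are distinct and the non-adjacency requirements involving $u_1$ or $v_1$ all hold. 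Since $|F_Y|\le\Delta+(j-2)\le 2\Delta$ and similarly $|F_X|\le 2\Delta$, at most $(|F_Y|+|F_X|)\Delta\le 4\Delta^2$ single edges of $G$ are incident with a vertex of $F_Y\cup F_X$. On the other hand $G$ has exactly $M-S(\bfm)$ single edges, which is at least $M-\totmultcut=\eps M+4\Delta^2$ by the standing hypothesis $S(\bfm)\le\totmultcut$ of Lemma~\ref{lem:sub-b-bound} and the choice $\totmultcut=(1-\eps)M-4\Delta^2$ from Section~\ref{sec:parameters}. Hence at least $\eps M>0$ single edges avoid $F_Y\cup F_X$ entirely, and any one of them, oriented with its $Y$-endpoint first, can serve as layer $j$.

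Iterating until $j=k+1$ produces a tuple $(u_1,v_1,\dots,u_{k+1},v_{k+1})$ that is the vertex tuple of a $k$-switching $S'$ with $S'$ producing $G$ and $V_i(S')=V_i$, which is the assertion of the claim. The only step that is not pure bookkeeping is the counting in the greedy extension, and the sole thing to watch there is that the supply of admissible single edges never runs dry; this reduces to $4\Delta^2<M-\totmultcut$, which is comfortably true under the parameter setup of Section~\ref{sec:parameters}. Everything else — distinctness, the bipartition side conditions, and the automatic vanishing of the same-side non-adjacency constraints — is routine.
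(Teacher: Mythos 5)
Your proposal is correct and follows essentially the same route as the paper: the paper proves the claim by reverse induction on $i$, at each step counting that the set of single edges vertex-disjoint from $V_i$ and avoiding the neighbourhoods of $u_1,v_1$ has size at least $M-\totmultcut-2i\Delta-2\Delta^2\geq \eps M>0$, which is exactly your greedy layer-by-layer extension with the same $4\Delta^2$ versus $M-\totmultcut$ comparison. The only difference is presentational (forward greedy construction versus backward induction), so there is nothing substantive to add.
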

	\begin{proof}
		The proof is by induction on $i$. The base case $i=k+1$ is trivial. Assuming we proved the statement for $i+1$, consider the set $A=A(V_i)$ of simple edges $u_{i+1}v_{i+1}$ that are vertex disjoint from $V_i$, such that $u_1u_{i+1}$ and $v_1v_{i+1}$ are non-edges (we also assume $u_{i+1}\in Y$ and $v_{i+1}\in X$). Then $$|A|\geq M-\totmultcut-2(i+1)\Delta-2\Delta^2,$$ since there are at least $M-\totmultcut$ simple edges and at most $2i\Delta+2\Delta^2$ of those have one of its endpoints in $V_i$, or adjacent to one of $u_1$ or $v_1$. Hence $|A|\geq 1$ and for any $u_{i+1}v_{i+1}\in A$ the ordered set $$V_{i+1}=V_i+(u_{i+1}v_{i+1})=(u_1,v_1,\ldots,u_{i+1},v_{i+1})$$ satisfies the assumption of the claim. Hence $V_{i+1}$ is a valid $i+1$-subset and consequently $V_i$ is a valid $i$-subset in $G$. \hfill \qed
	\end{proof}
	Now let $S$ be a $k$-switching that produces $G$ and for some $i\in[k]$ let $V_i=V_i(S)=(u_1, v_1\ldots, u_{i}, v_{i})$. Recall that $b_k(G,V_i)$ is the number of ordered edges that are switching compatible with $V_i$ in $G$. Claim~\ref{claim:validsubset} implies that $b_k(G,V_i)$ is the number of simple ordered edges $uv$ that are vertex disjoint from $V_i$ and such that $u_1u$ and $v_1v$ are non-edges. Therefore $b_k(G,V_i)\leq M$. On other hand, invalid choices of edges constitute of choosing a multiple edge (at most $S(\bfm)$ ways to do this), choosing an edge with endpoint in $V_i$ (at most $2i(\Delta-1)$ways to do this), or choosing an edge with one of its endpoints adjacent to $u_1$ or $v_1$ (at most $2(\Delta-2)(\Delta-1)$ ways). Hence
	$$M\left(1-\frac{S(\bfm)-2i\Delta-2\Delta^2}{M}\right)\leq b_{k}(G,V_i)\leq M.$$
	Recall that $S(\bfm)\leq \totmultcut$ and $\totmultcut=(1-\epsilon)M+4\Delta^2$, so
	$$\epsilon M\leq b_{k}(G,V_i)\leq M.$$
	
	According to Claim~\ref{claim:validsubset}, $b_k(G,V_0(S))$ is the number of possible choices of a multiple edge of multiplicity $k$ in $G$ and is equal to $m_k$, which is also equal to $\LB_k(\bfm, 0)$. \hfill \qed

\bigskip
	\noindent {\em Proof of Lemma~\ref{lemma:tbounds}.}
	\smallskip

	To perform a valid $k$-switching on $G$ we need to choose two $k$-stars $u_1,u_2\ldots,u_{k+1}$ and $v_1,v_2\ldots, v_{k+1}$, where $u_1\in X$ and $v_1\in Y$ and $u_i\in Y$, $v_i\in X$ for all $i\in[2,k+1]$. This can be done in at most $S_kT_k$ ways, hence an upper bound on $f_k(G)$. As for $b_{k}(G,S)$, the inequalities follow from Lemma~\ref{lem:sub-b-bound} and recalling that
	$$b_k(G,S)=\prod_{i=0}^{k} b_k(G, V_i(S)),  \quad \quad \LB_k(\bfm)=\prod_{i=0}^{k} \LB_k(\bfm, i).$$
	
	Finally, we show the lower bound for $f_2(G)$. There are at most $S_2$ ways to choose a labeled path $u_2u_1u_3$ and at most $T_2$ ways to choose a labelled path $v_2v_1v_3$, hence, there are at most $S_2T_2$ ways to choose two labeled paths. For the lower bound, we need to subtract the following choices: some of the vertices in path $u_2u_1u_3$ coincide with some in $v_2v_1v_3$ (there are at most $2\Delta^2(T_2+S_2)$ choices when this happens); at least one of the edges $u_1v_1$, $u_2v_2$ or $u_3v_3$ are present in $G$ (at most $\frac{1}{2}(T_2+S_2)3\Delta^3$ choices); or some of the edges form a multiple edge (at most $2S(\bfm)(T_2+S_2)\Delta$ choices). \hfill \qed
\bigskip

\noindent{\em Proof of Lemma~\ref{lem:beta}.~} 
	\smallskip
	
	Before proving the Lemma~\ref{lem:beta} we establish some inequalities for the size of sets $\mathcal{H}_{\bfm}$.
	First, let $b_{k}(G')$ be the number of $k$-switchings that produce a multigraph $G'\in \mathcal{H}_{\bfm+\bfe_k}$. As every $k$-switching $S$ that produces $G'$ can be identified with a valid $k+1$-subset $V_{k+1}(S)$ of $G'$, $b_{k}(G')$ is equal to the number of the valid $k+1$-subsets of $G'$. According to Lemma~\ref{lemma:tbounds}, there are at least $b_{k}(\bfm+\bfe_k,0)$ valid $1$-subsets in $G'$, and for $i\in[k]$ each valid $i$-subset can be extended to a valid $i+1$-subset in at least $b_{k}(\bfm,i)$ ways. Hence there are at least $\prod_{i=0}^kb_{k}(\bfm+\bfe_k,i)$ ways to choose a valid $k+1$-subset in $G'$, and consequently $b_{k}(G')\geq b_{k}(\bfm+\bfe_k)$.
	
	Therefore, for a sequence $\textbf{m}$ with $S(\textbf{m})\leq \totmultcut$ there are at least $|\mathcal{H}_{\textbf{m}+\bfe_{k}}| \underline{b}_k(\bfm+\bfe_{k})$ $k$-switchings that produce a multigraph in $\mathcal{H}_{\bfm+\bfe_k}$ from a multigraph in $\mathcal{H}_{\bfm}$. Now, it follows directly from Lemma~\ref{lemma:tbounds} that for all $k\geq 2$, 
	$$|\mathcal{H}_{\textbf{m}+\bfe_{k}}| \underline{b}_k(\bfm+\bfe_{k}) \leq |\mathcal{H}_{\textbf{m}}| \overline{f}_k(\bfm).$$ 
	Since $\underline{b}_k(\bfm)\geq m_kM^k\epsilon^k$, we have
	\begin{align}\label{ineq:H_i}
	\frac{|\mathcal{H}_{\textbf{m}+e_{k}}|}{|\mathcal{H}_{\textbf{m}}|}\leq \frac{S_kT_k}{(m_k+1)M^{k}} \cdot \frac{1}{\epsilon^k} \leq \frac{\Delta^{2k-2}}{\epsilon^k(m_k+1)M^{k-2}}
	\end{align}
	for all $k\geq 2$ and $\bfm$ with $S(\bfm)\leq \totmultcut$.

	The following definition is useful for the next Claim. For a sequence $\textbf{m}\in S_{\totmultcut}^-$ with $k=\ell(\textbf{m})$ define $U(\textbf{m})$ to be a set of all sequences $\textbf{m}^\prime=(0,m^{\prime}_2, \ldots, m^{\prime}_{\Delta})\in S_{\totmultcut}^-$ such that $\bfm \prec \bfm'$. Finally, recall that $\mathcal{H}^{+}_{\textbf{m}}=\cup_{\textbf{m}^\prime \in U(\textbf{m})}\mathcal{H}_{\textbf{m}^\prime}$. The following Claim motivates the definition of $\beta_{
		\bfm}$ for $\ell(\bfm)\geq 3$. 
	\begin{claim}\label{lemma:bound_H+}
		Assume that $4\Delta^4<M$ and $\textbf{m}\in S_{\totmultcut}^-$  with $\ell(\textbf{m})=\ell\geq 3$. Then 
		$$\frac{|\mathcal{H}^{+}_{\textbf{m}}|}{|\mathcal{H}_{\textbf{m}}|}\leq \frac{4\Delta^{2\ell-2}}{\epsilon^\ell M^{\ell-2}}.$$
	\end{claim}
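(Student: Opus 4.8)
The plan is to bound $|\mathcal{H}^+_{\bfm}|$ by partitioning the sequences $\bfm'\in U(\bfm)$ according to how they first differ from $\bfm$ and then chaining the ratio estimate~\eqref{ineq:H_i} along a monotone path of single-coordinate increments. Fix $\bfm$ with $\ell(\bfm)=\ell\ge 3$. Every $\bfm'\in U(\bfm)$ has $\bfm'_i=\bfm_i$ for $i<\ell$ and $\bfm'\succ\bfm$; in particular $S(\bfm')\ge S(\bfm)$ and $\bfm'$ is reachable from $\bfm$ by a finite sequence of moves $\bfm\mapsto\bfm+\bfe_k$ with $k\ge\ell$ (this is exactly the kind of chain \Gen\ follows), staying inside $S^-_{\totmultcut}$. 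So I would first set up, for each such $\bfm'$, a canonical such chain and use~\eqref{ineq:H_i} to write $|\mathcal{H}_{\bfm'}|/|\mathcal{H}_{\bfm}|$ as a product of factors each of the form $\dfrac{\Delta^{2k-2}}{\epsilon^k(m_k+1)M^{k-2}}$ with $k\ge\ell$.

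The key computation is then to sum these products over all $\bfm'\in U(\bfm)$. The cleanest way is to recognise that summing over all ways of incrementing coordinates $k\ge\ell$ (an arbitrary number of times, subject only to staying in $S^-_{\totmultcut}$, which I will simply drop as an upper bound on the sum) factorises as a product over $k$ from $\ell$ to $\Delta$ of geometric-type series. Concretely, for coordinate $k$, going from $m_k=0$ up through successive values contributes a telescoping product $\prod_{j\ge 1}\dfrac{\Delta^{2k-2}}{\epsilon^k j M^{k-2}}$, so that the total contribution of coordinate $k$ is bounded by $\exp\!\big(\Delta^{2k-2}/(\epsilon^k M^{k-2})\big)-1$, or more crudely by a geometric series in the single quantity $r_k:=\Delta^{2k-2}/(\epsilon^k M^{k-2})$. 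Under the hypothesis $4\Delta^4<M$ (together with $\epsilon=\Omega(1)$, $\ell\ge 3$, $k\le\Delta$) each $r_k$ is small — indeed $r_\ell\le \Delta^{2\ell-2}/(\epsilon^\ell M^{\ell-2})$ is the dominant term and the $r_k$ for $k>\ell$ are geometrically smaller — so the whole sum over $U(\bfm)$ is at most $(1+o(1))\,r_\ell$, and a careful choice of the constant gives the claimed bound $4\Delta^{2\ell-2}/(\epsilon^\ell M^{\ell-2})$.

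In more detail, the steps in order would be: (i) observe $|\mathcal{H}^+_{\bfm}|/|\mathcal{H}_{\bfm}|=\sum_{\bfm'\in U(\bfm)}|\mathcal{H}_{\bfm'}|/|\mathcal{H}_{\bfm}|$; (ii) bound each ratio by a product of factors from~\eqref{ineq:H_i}, indexed by the multiset of increments; (iii) upper-bound the sum over $U(\bfm)$ by the sum over all finite multisets of indices in $\{\ell,\ldots,\Delta\}$, which factorises as $\prod_{k=\ell}^{\Delta}\big(1+r_k+\tfrac{r_k^2}{2!}+\cdots\big)$ with $r_k=\Delta^{2k-2}/(\epsilon^k M^{k-2})$ — here I would keep the $1/(m_k+1)=1/j$ factor to get the factorial denominators; (iv) use $4\Delta^4<M$ and $\epsilon=\Omega(1)$ to show $r_k\le r_\ell\cdot(4\Delta^2/(\epsilon M))^{k-\ell}$ (or similar) so $\sum_{k>\ell}r_k$ and all higher-order terms are dominated, leaving $\prod_k(\cdots)\le 1+2r_\ell$ say, hence $\le 4r_\ell$ after absorbing; (v) conclude. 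The main obstacle is purely bookkeeping: making the factorisation in step (iii) rigorous (the map from $\bfm'$ to "multiset of increments above level $\ell$" is a bijection onto the relevant multisets, so no overcounting occurs) and then pinning down the constants in step (iv) so that the final bound is exactly $4\Delta^{2\ell-2}/(\epsilon^\ell M^{\ell-2})$ rather than something slightly larger; this is the sort of calculus-and-inequalities argument the paper says is deferred to Appendix A1, so I would carry it out there.
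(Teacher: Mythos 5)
Your proposal is correct and follows essentially the same route as the paper's proof: decompose each $\bfm'\in U(\bfm)$ as $\bfm+x_\ell\bfe_\ell+\cdots+x_\Delta\bfe_\Delta$, chain inequality~\eqref{ineq:H_i} to get $\prod_k r_k^{x_k}/(m_k+x_k)_{x_k}$, factorise the sum over $U(\bfm)$ into exponential-type series $\prod_{k=\ell}^{\Delta}\sum_i r_k^i/i!$ (minus the empty term), and use the geometric decay of $r_k$ together with $r_\ell\le 1/4$ to absorb everything into $4r_\ell$. The bookkeeping you flag (bijectivity of the increment decomposition, dropping the $S^-_{\totmultcut}$ constraint as an upper bound) is handled exactly as you describe.
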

	\begin{proof}
		Let $\textbf{m}\in S_{\totmultcut}^-$ and let $\ell=\ell(\textbf{m})$. By inequality (\ref{ineq:H_i}), for all $k \in [\ell,\Delta]$ we have
		$$\frac{|\mathcal{H}_{\textbf{m}+e_k}|}{|\mathcal{H}_{\textbf{m}}|}\leq \frac{\Delta^{2k-2}}{\epsilon^k(m_k+1)M^{k-2}}.$$
		
		Hence, for all integers $x\geq 0$ we have 
		$$\frac{|\mathcal{H}_{\textbf{m}+xe_k}|}{|\mathcal{H}_{\textbf{m}}|} \leq \left(\frac{\Delta^{2k-2}}{\epsilon^kM^{k-2}}\right)^x \frac{1}{(m_k+x)_x}.$$

		Now, each $\textbf{m}^{\prime}\in U(\textbf{m})$ can be considered as $\textbf{m}^\prime=\textbf{m}+x_\ell e_\ell+\ldots+x_{\Delta}e_{\Delta}$ for some non-negative $x_\ell, \ldots, x_\Delta$ and hence
		\begin{align*}
		\frac{|\mathcal{H}_{\textbf{m}^\prime}|}{|\mathcal{H}_{\textbf{m}}|}&=\frac{|\mathcal{H}_{\textbf{m}+x_\ell e_\ell}|}{|\mathcal{H}_{\textbf{m}}|}\frac{|\mathcal{H}_{\textbf{m}+x_\ell e_\ell +x_{\ell+1}e_{\ell+1}}|}{|\mathcal{H}_{\textbf{m}+x_\ell e_\ell}|}\cdots \frac{|\mathcal{H}_{\textbf{m}^\prime}|}{|\mathcal{H}_{\textbf{m}^\prime-x_\Delta e_\Delta}|}\\
		&\leq \left(\frac{\Delta^{2\ell-2}}{\epsilon^\ell M^{\ell-2}}\right)^{x_\ell} \frac{1}{(m_\ell+x_\ell)_{x_\ell}}\prod_{k=\ell+1}^{\Delta} \left(\frac{\Delta ^{2k-2}}{\epsilon^k M^{k-2}}\right)^{x_k} \frac{1}{x_k!}.
		\end{align*}
		
		So, finally we have 
		\begin{align*}
		\frac{|\mathcal{H}_{\textbf{m}}^+|}{|\mathcal{H}_{\textbf{m}}|} & \leq \left(\sum_{i=0}^{\infty}\left(\frac{\Delta^{2\ell-2}}{\epsilon^\ell M^{\ell-2}}\right)^i\frac{1}{(m_\ell+i)_i}\right)\prod_{k=\ell+1}^{\Delta}\left(\sum_{i=0}^{\infty}\left(\frac{\Delta^{2k-2}}{\epsilon^k M^{k-2}}\right)^{i} \frac{1}{i!}\right)-1
		\end{align*}
		Now, $\frac{\Delta^{2\ell-2}}{\epsilon^\ell M^{\ell-2}} \leq \frac{1}{4}$ implies 
		\begin{align*}
		\frac{|\mathcal{H}_{\textbf{m}}^+|}{|\mathcal{H}_{\textbf{m}}|} & \leq \left(1+2\left(\frac{\Delta^{2\ell-2}}{\epsilon^\ell M^{\ell-2}}\right)\right)\prod_{k=\ell+1}^{\Delta}\left(\exp\left(\frac{\Delta^{2k-2}}{\epsilon^k M^{k-2}}\right)\right)-1\\
		&\leq \left(1+2\left(\frac{\Delta^{2\ell-2}}{\epsilon^\ell M^{\ell-2}}\right)\right) \exp\left(\frac{\Delta^{2\ell-2 }}{\epsilon^\ell M^{\ell-2}}\right)-1\\
		&\leq \left(1+2\frac{\Delta^{2\ell-2}}{\epsilon^\ell M^{\ell-2}}\right)\left(1+1.2\left(\frac{\Delta^{2\ell-2}}{\epsilon^\ell M^{\ell-2}}\right)\right)-1\\
		&\leq \frac{4\Delta^{2\ell-2}}{\epsilon^\ell M^{\ell-2}}. \hspace{300pt} \qed
		\end{align*}
	\end{proof}

	Finally we are ready to prove Lemma~\ref{lem:beta}.
	
	We proceed by induction. Statement follows for all $\bfm$ with $\ell(\bfm)\geq 3$ from Claim~\ref{lemma:bound_H+}. So we may assume that $\ell(\bfm)=2$ and for all $\textbf{m}^{\prime} \in U(\textbf{m})$ we proved the Lemma.
	Then,
	\begin{align*}
	\beta_{\textbf{m}}&=\sum_{i=2}^{\Delta}\frac{\overline{f}_i(\bfm)}{\underline{b}_i(\bfm+\bfe_i)}(1+\beta_{\textbf{m}+\textbf{e}_{i}})\\
	&\geq \sum_{\scriptsize \begin{array}{c} 2 \leq i\leq \Delta \\ \textbf{m}+\textbf{e}_i
		\in U(\textbf{m})\end{array}}\frac{\overline{f}_i(\bfm)}{\underline{b}_i({\bfm+\bfe_i})}\left(1+\frac{|\mathcal{H}^+_{\textbf{m}+\textbf{e}_{i}}|}{|\mathcal{H}_{\textbf{m}+\textbf{e}_{i}}|}\right)\\
	&\geq \sum_{\scriptsize \begin{array}{c} 2 \leq i\leq \Delta \\ \textbf{m}+\textbf{e}_i
		\in U(\textbf{m})\end{array}}\frac{|\mathcal{H}_{\textbf{m}+\textbf{e}_{i}}|}{|\mathcal{H}_{\textbf{m}}|}\left(1+\frac{|\mathcal{H}^+_{\textbf{m}+\textbf{e}_{i}}|}{|\mathcal{H}_{\textbf{m}+\textbf{e}_{i}}|}\right)\\
	&\geq \sum_{\scriptsize \begin{array}{c} 2 \leq i\leq \Delta \\ \textbf{m}+\textbf{e}_i
		\in U(\textbf{m})\end{array}}\frac{|\mathcal{H}_{\textbf{m}+\textbf{e}_{i}}|+|\mathcal{H}^+_{\textbf{m}+\textbf{e}_{i}}|}{|\mathcal{H}_{\textbf{m}}|}=\frac{|\mathcal{H}^{+}_{\textbf{m}}|}{|\mathcal{H}_{\textbf{m}}|}. \qed
	\end{align*}
	\medskip
	
\noindent	{\em Proof of Lemma~\ref{lemma:tail}.}
\smallskip

	Recall that $\totmultcut=(1-\epsilon)M-4\Delta^2$. For $t\geq0$ define 
	$\mathcal{S}_t=\bigcup_{S(\bfm)=t}\mathcal{H}_{\bfm}$ and observe that  $\bigcup_{\bfm\in S^+_{\totmultcut}}\mathcal{H}_{\bfm}=\bigcup_{t\geq \totmultcut}\mathcal{S}_t$.  The proof of the lemma is based on the following two claims. We first estimate $|\mathcal{S}_t|\slash |\mathcal{H}_0|$ for $t$ close to $\totmultcut$ and then estimate size of $\bigcup_{\bfm \in S_{\totmultcut}^{+}}\mathcal{H}_{\bfm}$ via sizes of four appropriate $\mathcal{S}_t$.
	
	\begin{claim}\label{claim:ratioto0}
		For $t\leq \totmultcut$ the following inequality holds
		$$\frac{|\mathcal{S}_t|}{|\mathcal{H}_{\bf{0}}|}\leq \left(\frac{\Delta^2e}{\epsilon^2(1-\epsilon)t}\right)^t.$$
	\end{claim}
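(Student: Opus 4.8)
My plan is to bound $|\mathcal{S}_t|$ by summing $|\mathcal{H}_{\bfm}|$ over all stratum indices $\bfm$ with $S(\bfm)=t$, and to relate each $|\mathcal{H}_{\bfm}|$ to $|\mathcal{H}_{\bf 0}|$ via the edge-ratio inequality~\eqn{ineq:H_i} already established in the proof of Lemma~\ref{lem:beta}. First I would observe that any $\bfm$ with $S(\bfm)=t$ can be reached from $\bf 0$ by a sequence of single increments of the coordinates: starting from $\bf 0$, add $m_2$ increments in coordinate $2$, then $m_3$ in coordinate $3$, and so on. Telescoping the bound $|\mathcal{H}_{\bfm'+\bfe_k}|/|\mathcal{H}_{\bfm'}|\le \Delta^{2k-2}/(\epsilon^k(m'_k+1)M^{k-2})$ along such a path — exactly as in the derivation preceding Claim~\ref{lemma:bound_H+} — gives
\[
\frac{|\mathcal{H}_{\bfm}|}{|\mathcal{H}_{\bf 0}|}\le \prod_{k=2}^{\Delta}\left(\frac{\Delta^{2k-2}}{\epsilon^k M^{k-2}}\right)^{m_k}\frac{1}{m_k!}.
\]
Here I need $S(\bfm)\le\totmultcut$ so that inequality~\eqn{ineq:H_i} applies at every step along the path, which holds since every prefix of the path has $S$-value at most $t\le\totmultcut$.

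Next I would sum this over all $\bfm$ with $\sum_{k\ge2}k\,m_k=t$. The cleanest way is a generating-function / exponential-bound argument: since $\Delta^{2k-2}/M^{k-2}=\Delta^2(\Delta^2/M)^{k-2}\le\Delta^2$ (using $\Delta^2\le M$, which follows from $5\Delta^4<M$), each factor is at most $(\Delta^2/\epsilon^k)^{m_k}/m_k!\le (\Delta^2/\epsilon^{k})^{m_k}/m_k!$. Crudely bounding $\epsilon^{-k}\le \epsilon^{-t/2}\cdot(\text{something})$ is too lossy; instead I would write the factor for coordinate $k$ as $(x_k)^{m_k}/m_k!$ with $x_k=\Delta^{2k-2}/(\epsilon^k M^{k-2})$ and note $\sum_{\bfm:\,S(\bfm)=t}\prod_k x_k^{m_k}/m_k!$ is the coefficient of $z^t$ in $\prod_{k\ge2}\exp(x_k z^k)=\exp\big(\sum_{k\ge2}x_k z^k\big)$. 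For $z=1/\epsilon$ one gets $\sum_{k\ge2}x_k\epsilon^{-k}=\sum_{k\ge2}\Delta^{2k-2}/(\epsilon^{2k}M^{k-2})$, a geometric-type series dominated by its first term $\Delta^2/\epsilon^4$ (using $\Delta^2/(\epsilon^2 M)\le \epsilon/(\dots)$, which follows from the choice $\eps^3>4\Delta^4/M$ in the setup of $\totmultcut$). Hence the coefficient of $z^t$ is at most $\epsilon^{t}\exp(O(\Delta^2/\epsilon^4))$, but this is not quite the clean bound claimed; I would instead bound termwise, using that the dominant contribution comes from $k=2$, i.e.\ $m_2$ close to $t/2$, to extract the stated $\big(\Delta^2 e/(\epsilon^2(1-\epsilon)t)\big)^t$ shape — the $e$ coming from Stirling's approximation $m_2!\ge(m_2/e)^{m_2}$ applied with $m_2\approx t/2$.

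The main obstacle, I expect, is the bookkeeping in this last combinatorial sum: showing that among all ways to split $t$ as $\sum k\,m_k$, the choice maximizing $\prod_k x_k^{m_k}/m_k!$ is (up to constants) the one with all weight on $k=2$, and then controlling the remaining sum over the (at most $t$) values of $m_2$ by a geometric-series / largest-term argument. Concretely I would argue that replacing one unit of multiplicity-$k$ weight ($k\ge3$) by an appropriate amount of multiplicity-$2$ weight only increases the product, because $x_k\le x_2^{k/2}$-type inequalities hold under $\Delta^2\le\epsilon^{?}M$ from the $\totmultcut$ setup; then the whole sum is at most $t$ times the $m_2=\lfloor t/2\rfloor$ term, and $t\cdot\big(\tfrac{x_2}{\epsilon^{0}}\big)^{t/2}/(t/2)!\le (\Delta^2 e/(\epsilon^2(1-\epsilon)t))^t$ after plugging $x_2=1/\epsilon^2$ and absorbing lower-order factors (the factor $(1-\epsilon)$ is available as slack since $\totmultcut=(1-\epsilon)M-4\Delta^2$ forces $t\le(1-\epsilon)M$, giving room in the exponent). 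Everything else is routine inclusion–exclusion-free arithmetic of the kind deferred to Appendix~A1.
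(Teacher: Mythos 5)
Your first step --- telescoping inequality \eqn{ineq:H_i} along a path of unit increments from $\bf 0$ to $\bfm$ to obtain $|\H_{\bfm}|/|\H_{\bf 0}|\le\prod_{k=2}^{\Delta}\bigl(\Delta^{2k-2}/(\epsilon^kM^{k-2})\bigr)^{m_k}/m_k!$ --- is exactly the paper's first step. The divergence, and the gap, lies in how you then sum over $\{\bfm:S(\bfm)=t\}$. The paper identifies that sum with the coefficient of $x^t$ in $\prod_k\exp(x_kx)=\exp\bigl(\bigl(\sum_kx_k\bigr)x\bigr)$, i.e.\ with $\bigl(\sum_kx_k\bigr)^t/t!$, then bounds the geometric series $\sum_kx_k\le\Delta^2/(\epsilon^2(1-\epsilon))$ and uses $1/t!\le(e/t)^t$; the entire factor $(e/t)^t$ in the claim comes from that single $1/t!$. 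Your route cannot produce that factor. Concentrating the sum near $m_2\approx t/2$ and applying Stirling to $m_2!$ yields at best something of order $\bigl(2e\Delta^2/(\epsilon^2t)\bigr)^{t/2}$ --- exponent $t/2$, not $t$. In the regime where the claim is actually invoked (Lemma~\ref{lemma:tail} uses it with $t\ge\totmultcut-7=\Omega(M)$, while $\Delta^2=O(\sqrt M)$), the base $\Delta^2e/(\epsilon^2(1-\epsilon)t)$ is $O(M^{-1/2})\ll1$, so a bound with exponent $t/2$ is essentially the square root of the claimed bound and does not imply it. Your first idea (evaluating the generating function at $z=1/\epsilon$, giving $\epsilon^te^{O(\Delta^2/\epsilon^4)}$) is weaker still, with no superexponential decay in $t$; you were right to abandon it, but the replacement does not close the gap.

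Two secondary points. First, ``the whole sum is at most $t$ times the $m_2=\lfloor t/2\rfloor$ term'' undercounts: the number of $\bfm$ with $S(\bfm)=t$ is the number of partitions of $t$ into parts from $\{2,\dots,\Delta\}$, which can be of order $t^{\Delta-2}$, not $t$. Second, a caution that explains why your attempt stalls: your generating function $\prod_k\exp(x_kz^k)$ (marking each unit of multiplicity) is the natural one for the constraint $\sum_kkm_k=t$, whereas the paper's $f(x)$ marks each multiple edge with a single power of $x$, so its coefficient of $x^t$ ranges over $\sum_km_k=t$; reconciling these two index sets is precisely where the stated $(\,\cdot\,)^t$ bound has to be earned, and a largest-term argument over the first index set does not do it. As written, your proposal establishes only a bound with exponent $t/2$, which is not the inequality claimed.
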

	\begin{proof}
		Iterative application of inequality (\ref{ineq:H_i}) implies that for all $\bfm$ with $S(\bfm)\leq \totmultcut$
		$$\frac{|\mathcal{H}_{\bf{m}}|}{|\mathcal{H}_{\bf{0}}|} \leq \prod_{k=2}^{\Delta} \left(\frac{\Delta ^{2k-2}}{\epsilon^k M^{k-2}}\right)^{m_k} \frac{1}{m_k!}.$$
		Hence, the coefficient of $x^t$ in the Taylor expansion of 
		$$f(x)=\prod_{k=2}^{\Delta}\sum_{i=0}^{\infty}\left(\frac{\Delta^{2k-2}}{\epsilon^k M^{k-2}}\right)^i\frac{x^i}{i!}$$ is an upper bound for $|\mathcal{S}_t|\slash |\mathcal{H}_{\bf{0}}|$. On other hand,  $f(x)=\exp\left({\sum_{k=2}^{\Delta}\left(\frac{\Delta^{2k-2}}{\epsilon^kM^{k-2}}\right)}x\right)$, so we conclude that 
		$$\frac{|\mathcal{S}_t|}{|\mathcal{H}_{\bf{0}}|}\leq \left(\sum_{k=2}^{\Delta}\frac{\Delta^{2k-2}}{\epsilon^kM^{k-2}}\right)^t\frac{1}{t!} \leq \left(\frac{\Delta^{2}}{\epsilon^2(1-\epsilon)}\right)^t\left(\frac{e}{t}\right)^t.
		\hfill\qed$$\end{proof}
	\begin{claim}\label{claim:brute16}
		For $t\geq \totmultcut+3$ we have 
		$$\frac{|\mathcal{S}_t|+|\mathcal{S}_{t-1}|+|\mathcal{S}_{t-2}|+|\mathcal{S}_{t-3}|}{|\mathcal{S}_{t-4}|+|\mathcal{S}_{t-5}|+|\mathcal{S}_{t-6}|+|\mathcal{S}_{t-7}|}\leq C_1,$$
		where $C_1=3\left(\frac{1}{2(1-\epsilon)^2}+\frac{1}{4(1-\epsilon)^4}\right)$
	\end{claim}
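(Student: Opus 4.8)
The plan is to set up a single recursion bounding $|\mathcal{S}_s|$ from above by a weighted sum of $|\mathcal{S}_{s-k}|$, $2\le k\le\Delta$, apply it to each of $\mathcal{S}_t,\mathcal{S}_{t-1},\mathcal{S}_{t-2},\mathcal{S}_{t-3}$, iterate it once more to push every term into the window $\{t-4,\dots,t-7\}$, and add up. To obtain the recursion, classify each $G\in\mathcal{S}_s$ by $k=\ell(\bfm(G))$, the largest multiplicity occurring in $G$, and write $\bfm(G)=\bfm'+\bfe_k$ with $S(\bfm')=s-k$ and $\ell(\bfm')\le k$; the assignment $\bfm\mapsto(\bfm',k)$ is a bijection onto the set of such pairs. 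Running the switching-counting argument used to establish $(\ref{ineq:H_i})$ and Lemma~\ref{lem:beta}, but keeping the exact value of $S$ instead of only the bound $S\le\totmultcut$, gives
\[
|\mathcal{S}_s|\ \le\ \frac{2\,S_2T_2}{s\,(M-s-O(\Delta^2))^{2}}\,|\mathcal{S}_{s-2}|\ +\ \frac{S_3T_3}{(M-s-O(\Delta^2))^{3}}\,|\mathcal{S}_{s-3}|\ +\ \sum_{k=4}^{\Delta}\frac{S_kT_k}{(M-s-O(\Delta^2))^{k}}\,|\mathcal{S}_{s-k}|,
\]
valid whenever the single-edge count $M-s$ comfortably exceeds the $O(\Delta^2)$ inclusion--exclusion correction. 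The extra factor $2/s$ in the $k=2$ term appears because $\ell(\bfm')\le 2$ and $S(\bfm')=s-2$ force the unique pure double-edge stratum, which already contains $s/2$ double edges, so the $1/(\bfm'_2+1)$ factor from the switching count is $2/s$.

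Next I would estimate the coefficients. Using $S_kT_k\le\Delta^{2k-2}M^2$, $5\Delta^4<M$, and the defining inequalities for $\totmultcut$ (in particular $\eps=\Omega(1)$ and $\eps^{3}>4\Delta^4/M$): in the range where $M-s$ is of order $\eps M$, the $k=2$ coefficient is $O(\Delta^2/(\eps^2 s))=O(M^{-1/2})$ (here the $2/s$ factor is what makes it small), the $k=3$ coefficient is $O(1)\cdot\Delta^4/(\eps^3M)<\tfrac12$, and $\sum_{k\ge4}$ of the coefficients is $O(M^{-1/2})$. Substituting the recursion into itself, the four numerator terms become combinations of $|\mathcal{S}_{t-4}|,\dots,|\mathcal{S}_{t-7}|$ — the double--double, double--triple, triple--double and triple--triple reductions — together with strata of index below $t-7$ carrying coefficients $O(M^{-1/2})$, which are negligible (e.g.\ by Claim~\ref{claim:ratioto0}, which controls all nearby $|\mathcal{S}_j|$). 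Every coefficient that survives is at most $\tfrac14$, hence far below $C_1=3\big(\tfrac1{2(1-\eps)^2}+\tfrac1{4(1-\eps)^4}\big)\ge\tfrac94$, so the claimed inequality holds with ample slack; only the bookkeeping has to be done carefully enough to see that the surviving coefficients are dominated by $\tfrac1{2(1-\eps)^2}$ and $\tfrac1{4(1-\eps)^4}$, which is a routine calculation of the kind flagged in Section~\ref{sec:parameters}.

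The part I expect to cost most of the work is the complementary regime: $S$-values for which $M-s$ is not large compared with $\Delta^2$ (or even $\Delta\sqrt M$). The claim puts no upper bound on $t$, and $\mathcal{S}_t$ really can be nonempty all the way to $t=M$ — for instance the multigraph obtained by doubling every edge of a sufficiently sparse simple bipartite graph — so this regime cannot be ignored. There the $\totmultcut$-based ratio bounds are vacuous, and instead one must argue from the scarcity of single edges: a reverse $2$-switching (a swap between two multiple edges) that raises $S$ by a prescribed amount is forced to use the few single edges present, which limits both the number of such multigraphs and the number of ways one is produced from another. Turning this into a clean constant-factor bound — absorbing the $\Delta$-dependence, which is harmless when $\eps$ is near $1$ since $C_1$ is then enormous, but needs more care when $\eps$ is bounded away from $1$ — and stitching it to the bulk recursion at the crossover, is the main obstacle; everything else reduces to the inclusion--exclusion and calculus bookkeeping already anticipated in Section~\ref{sec:parameters}.
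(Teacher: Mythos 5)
Your bulk recursion (expanding $|\mathcal{S}_s|$ over the largest multiplicity $k$ and using the forward/backward counts of the $k$-switching, as in inequality \eqref{ineq:H_i}) is a genuinely different route from the paper's, and it is plausible in the regime where single edges are abundant. But the proof has a gap that you yourself identify and do not close: the claim must hold for every $t\ge \totmultcut+3$ up to $t=M$, and when $M-s=O(\Delta^2)$ (let alone $s=M$) the lower bound $\underline{b}_k$ on the number of $k$-switchings producing a graph in $\mathcal{S}_s$ is zero or negative — indeed a graph with no single edges is produced by \emph{no} $k$-switching — so the recursion you rely on simply does not exist there. Your sketch for that regime (``argue from the scarcity of single edges'') is not an argument, and the ``stitching at the crossover'' is precisely where the content of the claim lies. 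A secondary problem in the bulk: after two iterations you discard the terms $|\mathcal{S}_{t-j}|$ with $j\ge 8$ as ``negligible by Claim~\ref{claim:ratioto0}'', but that claim only compares $|\mathcal{S}_j|$ to $|\mathcal{H}_{\bf 0}|$ for $j\le\totmultcut$; it does not bound $|\mathcal{S}_{t-j}|$ against the denominator $|\mathcal{S}_{t-4}|+\cdots+|\mathcal{S}_{t-7}|$, and for $t$ near $\totmultcut+3$ the strata just below $\totmultcut$ can be vastly more populated than those just above, so a small coefficient alone does not make these terms negligible.

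The missing idea — and the reason the paper needs no case split — is to use a switching whose \emph{forward} count depends on the abundance of multiple edges rather than single edges. The paper's auxiliary switching takes two multiple edges $u_1v_1$, $u_2v_2$ (with $u_1v_2$, $u_2v_1$ non-adjacent), lowers each multiplicity by one and inserts $u_1v_2$, $u_2v_1$; it maps $\mathcal{S}_\ell$ into $\mathcal{S}_{\ell-2}\cup\mathcal{S}_{\ell-3}\cup\mathcal{S}_{\ell-4}$ according to whether the two chosen multiplicities equal $2$ or exceed it. Any $G\in\mathcal{S}_\ell$ has at least $\ell/\Delta$ multiple edges, so the forward count is at least $\frac{\ell}{\Delta}\bigl(\frac{\ell}{\Delta}-(\Delta-2)-(\Delta-2)(\Delta-1)\bigr)$, which the condition $2\totmultcut(\totmultcut-\Delta^2-\Delta^3)\ge(\totmultcut+4\Delta^2)^2$ keeps at least $\tfrac12(1-\eps)^2M^2/\Delta^2$ for all $\ell\ge\totmultcut$, while the backward count is at most $M(\Delta-1)^2$ unconditionally. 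This yields $|\mathcal{S}_\ell|\le\frac{1}{2(1-\eps)^2}\,|\mathcal{S}_{\ell-2}\cup\mathcal{S}_{\ell-3}\cup\mathcal{S}_{\ell-4}|$ uniformly in $\ell\ge\totmultcut$, and summing this over $\ell\in\{t,t-1,t-2,t-3\}$ (applying it twice for the terms that land back inside the numerator window) gives exactly $C_1=3(C_0^2+C_0)$ with $C_0=\frac{1}{2(1-\eps)^2}$. If you want to salvage your approach you would need to prove an analogue of this multiple-edge-based bound in the dense regime yourself, at which point you have essentially reconstructed the paper's proof.
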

	\begin{proof}
		To prove this Claim we make a use of the auxiliary switching defined as follows. Assume that $u_1v_1$ and $u_2v_2$ are multiple edges in a multigraph $G$ with $u_1,u_2\in X$, such that $u_1v_2$ and $u_2v_1$ are non-edges. Auxiliary switching reduces multiplicities of $u_1v_1$ and $u_2v_2$ by $1$ and adds edges $u_1v_2$ and $u_2v_1$. Auxiliary switchings maps multigraphs from $\mathcal{S}_\ell$ to multigraphs in $\mathcal{S}_{\ell-2}\bigcup \mathcal{S}_{\ell-3}\bigcup \mathcal{S}_{\ell-4}$ depending on the multiplicities of $u_1v_1$ and $u_2v_2$. 
		For every multigraph $G\in \mathcal{S}_\ell$ there are at least $\frac{\ell}{\Delta}(\frac{\ell}{\Delta}-(\Delta-2)-(\Delta-2)(\Delta-1))$
		auxiliary switchings that can be performed on $G$. On other hand, for each $G'\in \mathcal{S}_{\ell-2}\bigcup \mathcal{S}_{\ell-3}\bigcup \mathcal{S}_{\ell-4}$ there are at most $M(\Delta-1)^2$ auxiliary switching that result in $G'$. Therefore for $\ell\geq \totmultcut$ we have a bound 
		$$\frac{|\mathcal{S}_\ell|}{|\mathcal{S}_{\ell-2}\bigcup \mathcal{S}_{\ell-3}\bigcup \mathcal{S}_{\ell-4}|}\leq \frac{M(\Delta-1)^2}{\frac{\ell}{\Delta}(\frac{\ell}{\Delta}-(\Delta-2)-(\Delta-2)(\Delta-1))}.$$
		Recall that $4\Delta^4\leq M$, $\totmultcut=(1-\epsilon)M-4\Delta^2$ and $\totmultcut(\totmultcut-\Delta^2-\Delta^3)\geq \frac{1}{2}(1-\epsilon)^2M^2$, then
		\begin{align}\label{eq:NewUB}\frac{|\mathcal{S}_\ell|}{|\mathcal{S}_{\ell-2}\bigcup \mathcal{S}_{\ell-3}\bigcup \mathcal{S}_{\ell-4}|}&\leq \frac{2\Delta^4}{(1-\epsilon)^2M}\leq \frac{1}{2(1-\epsilon)^2}. \end{align}
		Now, set $C_0=\frac{1}{2(1-\epsilon)^2}$, applying inequality (\ref{eq:NewUB}) recursively for values of $\ell\in \{t, t-1,t-2,t-3\}$ yields
		$$\frac{|\mathcal{S}_t|+|\mathcal{S}_{t-1}|+|\mathcal{S}_{t-2}|+|\mathcal{S}_{t-3}|}{|\mathcal{S}_{t-4}|+|\mathcal{S}_{t-5}|+|\mathcal{S}_{t-6}|+|\mathcal{S}_{t-7}|}\leq 3(C_0^2+C_0).\hfill \qed$$

	\end{proof}
	
	Now we are ready to prove the lemma. First, let $k=\lceil\frac{M-\totmultcut+1}{4}\rceil$ and $x=M-4k-4$, then in view of Claim~\ref{claim:brute16}, 
	$$\frac{|\bigcup_{t\geq \totmultcut}\mathcal{S}_t|}{|\mathcal{S}_x|+|\mathcal{S}_{x+1}|+|\mathcal{S}_{x+2}|+|\mathcal{S}_{x+3}|}\leq C_1+C_1^2+\ldots C_1^k\leq C_1^{k+1}.$$
	
	Now, it is easy to verify that $\totmultcut-4\geq x\geq \totmultcut-7$ and in view of Claim~\ref{claim:ratioto0}
	$$\frac{|\mathcal{S}_x|+|\mathcal{S}_{x+1}|+|\mathcal{S}_{x+2}|+|\mathcal{S}_{x+3}|}{|\mathcal{H}_0|}\leq 4 \left(\frac{\Delta^2e}{\epsilon^2(1-\epsilon)(\totmultcut-7)}\right)^{\totmultcut-7}.$$
	
	Therefore, we obtain 
	$$\frac{|\bigcup_{\bfm\in S^{+}_{\totmultcut}}\mathcal{H}_{\bfm}|}{|\mathcal{H}_0|}=\frac{|\bigcup_{t\geq \totmultcut}\mathcal{S}_t|}{|\mathcal{H}_0|}\leq 4C_1^{k+1} \left(\frac{\Delta^2e}{\epsilon^2(1-\epsilon)(\totmultcut-7)}\right)^{\totmultcut-7}.$$
	Finally, the statement of the lemma follows from observing that $k+1\leq \epsilon M \slash 2$.

	
	\subsection*{A2. Proof of Lemmas~\ref{lemma:largem_0} and~\ref{lemma:rej}.}\label{sec:prooftwolemmas}

	We first estimate the probability that algorithm \Gen\ creates a graph $G\in \mathcal{H}_\bfm$ with $\ell(\bfm)=2$ and with $m_2>3S_2T_2/\epsilon^2M^2$. 
	\bigskip
	
\noindent{\em Proof of Lemma~\ref{lemma:largem_0}.~}
\smallskip

	We first note that if $\ell(\bfm)=2$ and $m_2>3S_2T_2/\epsilon^2M^2$, then 
	$$\frac{\overline{f}_2(\bfm)}{\underline{b}_2(\bfm+\bfe_2)}\leq \frac{\epsilon^2}{3} \frac{1}{\big(1-(S(m)+4\Delta+2\Delta^2)/M \big)^2}\leq \frac{1}{3},$$
	and for $k>2$,  inequality (\ref{ineq:fb}) implies 
	$$\frac{\overline{f}_k(\bfm)}{\underline{b}_k(\bfm+\bfe_k)} \leq   \frac{\Delta^{2k-2}}{\epsilon^kM^{k-2}}.$$
	
	Once we reached a graph $G\in \mathcal{H}_{\bfm}$ with $m_2>3S_2T_2/\epsilon^2M^2$, the probability that we decide to not output $G$ and increase the number of double edges in $G$ is at most \begin{align*}
	\frac{\beta_{\bfm}}{1+\beta_{\bfm}}&\frac{\overline{f}_2(\bfm)}{\underline{b}_2(\bfm+\bfe_2)}(1+\beta_{\bfm+\bfe_2})\cdot\frac{1}{ \beta_{\bfm}}\leq \frac{\overline{f}_2(\bfm)}{\underline{b}_2(\bfm+\bfe_2)}(1+\beta_{\bfm+\bfe_2})\\
	&\leq \frac{1}{3}\left(1+\frac{\overline{f}_2(\bfm+\bfe_2)}{\underline{b}_2(\bfm+2\bfe_2)}(1+\beta_{\bfm+2\bfe_2})+\sum_{i=3}^\Delta \frac{\Delta^{2i-2}}{\epsilon^iM^{i-2}}(1+3\frac{\Delta^{2i-2}}{\epsilon^iM^{i-2}})\right)\\
	&\leq \frac{1}{3}\left(1+3\frac{\Delta^{4}}{\epsilon^3M}\right)+\frac{1}{3}\frac{\overline{f}_2(\bfm+\bfe_2)}{\underline{b}_2(\bfm+2\bfe_2)}(1+\beta_{\bfm+2\bfe_2})\\
	&\leq \left(\frac{1}{3}+\left(\frac{1}{3}\right)^2\right)\left(1+3\frac{\Delta^{4}}{\epsilon^3M}\right)+\left(\frac{1}{3}\right)^2\frac{\overline{f}_2(\bfm+2\bfe_2)}{\underline{b}_2(\bfm+3\bfe_2)}(1+\beta_{\bfm+3\bfe_2})\\
	&\leq \left(\frac{1}{3}+\left(\frac{1}{3}\right)^2+\ldots\right)(1+3\frac{\Delta^4}{\epsilon^3M})\leq \frac{7}{8}.
	\end{align*}
	
	Hence, the probability of deciding to not output $G$ and increase the number of double edges in $G$ is at most $\frac{7}{8}$.
	
	Condition on reaching a graph in $\mathcal{H}_{\bfm'}$ with $m'_2=3S_2T_2/\epsilon^2M^2$ in \Gen, probability that we reach a graph in $\mathcal{H}_{\bfm}$ with $m_2$ double edges  is at most $\left(\frac{7}{8}\right)^{m_2-3S_2T_2/\epsilon^2M^2}$. So, unconditional probability is also at most that large.  \hfill \qed
	\bigskip

\noindent{\em Proof of Lemma~\ref{lemma:rej}.~}
\ss

	We separate a single run of \Gen\ into two parts: part (a) is when the current graph $G\in \mathcal{H}_{\textbf{m}}$ with $\ell(\textbf{m})=2$ and part (b) is when $\ell(\textbf{m})>2$. We will show that probability of ever reaching part (b) is at most  $3/4$.

	For now we consider part (a). Set  $t_0=\max\{6\epsilon^2,1\}$. 
	\ss
	
	\noindent {\bf Case 1:}  $  S_2T_2/M^2 \geq t_0$.
	\ss
	
	The probability that in part (a) we ever reach a graph $G$ with more than $4 S_2T_2/\epsilon^2M^2 $ double edges, by Lemma~\ref{lemma:largem_0}, is at most $\left(\frac{7}{8}\right)^{S_2T_2/\epsilon^2M^2}<\frac{1}{2}$. Hence, the probability of rejection happening on some $G$ with more than $4S_2T_2/\epsilon^2M^2$ double edges is at most $\frac{1}{2}$. So, we need to consider only the case when part (a) runs for at most $4S_2T_2/\epsilon^2M^2$ iterations.
	
	{\textbf{ b-rejection}.}
	The probability that b-rejection does not happen during a single switching step is at least
	$$\prod_{i=1}^{2} \frac{\underline{b}_2(\bfm+\bfe_2,i)}{M}\geq \left(1-\frac{4S_2T_2/\epsilon^2M^2+4\Delta+2\Delta^2}{M}\right)^2.$$
	Hence the probability of b-rejection not happening during part (a) of a single run of \Gen\ is at least  
	$$\left(1-\frac{4S_2T_2/\epsilon^2M^2+4\Delta+2\Delta^2}{M}\right)^{8S_2T_2/\epsilon^2M^2}.$$
	The last quantity is   $\exp(-O( \Delta^2S_2T_2/M^2))=\exp(-O( \Delta^4/M))$. 
	
	{\textbf{f-rejection}.}
	Similarly, the probability of not having f-rejection during part (a) of one run of \Gen\ is at least
	$$\left(1-\frac{\Delta(S_2+T_2)(2(4S_2T_2/\epsilon^2M^2)+2\Delta+1.5\Delta^2)}{S_2T_2}\right)^{4S_2T_2/\epsilon^2M^2}.$$
	This is   $\exp(-O( \Delta^3(S_2+T_2)/M^2 ))=\exp(-O( \Delta^4/M))$.  
	
	During part (a), $\beta$-rejection does not happen because in this case 
	$$\beta_{\bfm}=\sum_{s=2}^\Delta \frac{\UB_s{(\bfm)}}{\LB_s{(\bfm+\bfe_s})}(1+\beta_{\bfm+\bfe_s}).$$
	
	Combining these conclusions, we deduce that the probability of not having any rejection during   part (a) of one run of \Gen\ is at least $\frac{1}{2}\exp(-O( \Delta^4/M ))$, which is at least $c_1$ for some $c_1>0$.

	\ss
	\noindent {\bf Case 2:}  $ S_2T_2/M^2 <t_0$.
	\ss
	
	This  is similar to Case 1. The probability of reaching a graph with more than $6+3t_0/\epsilon^2$ double edges is at most $1/2$. So it is enough to consider the  case when part (a) runs only for $6+3t_0/\epsilon^2$ iterations. In this case $\beta_{\bf{0}}>c_2$ for some absolute constant $c_2>0$, hence the probability of not having rejection during   part (a) is at least $1/2(1+c_2)$. In Case 2 we define $c_1=1/2(1+c_2)$.\medskip

	Finally, we estimate the probability of ever having part (b) during a single run of \Gen. This requires that some $G$ was generated in $\mathcal{H}_{\textbf{m}}$, where $\textbf{m}= (0,m_2,0,\ldots,0)$, and it was decided not to output $G$, and then  some $s\in[3,\Delta]$ was chosen. We say that in this case  part  (b) was initiated from $G$. Now
	\begin{align*}\frac{\mathbb{P}(\text{part (b) initiated from} \; G )}{\mathbb{P}(\text{output}\;  G )}&=\left.\frac{\beta_{\textbf{m}}}{1+\beta_{\textbf{m}}}\left(\sum_{k=3}^{\Delta}\frac{\UB_k{(\bfm)}}{\LB_k{(\bfm+\bfe_k})}(1+\beta_{\textbf{m}+\textbf{e}_{k}})\slash \beta_{\textbf{m}}\right)\middle\slash \frac{1}{1+\beta_{\textbf{m}}}\right.\\
	&=\sum_{k=3}^{\Delta}\frac{\UB_k{(\bfm)}}{\LB_k{(\bfm+\bfe_k})}(1+\frac{4\Delta^{2k-2}}{\epsilon^kM^{k-2}})\\
	&\leq \sum_{k=3}^{\Delta}\frac{\Delta^{2k-2}}{\epsilon^kM^{k-2}}(1+\frac{4\Delta^{2k-2}}{\epsilon^kM^{k-2}})\leq  3 \frac{\Delta^4}{\epsilon^3M}.
	\end{align*}
	(The very last inequality is based on the assumption that $M$ is large enough and on the inequality  $ \Delta^4/\epsilon^3M<\frac{1}{4}$.)
	
	Hence, 
	\begin{align*}\frac{\mathbb{P}(\text{part (b) initiated from some $G$)}}{\mathbb{P}(\text{some} \; G \; \text{outputted in part (a) )}}&=\frac{\sum_{G \in \mathcal{H}_{\textbf{m}}, \; \ell(\textbf{m})=2}\mathbb{P}(\text{part (b) initiated  from} \; G )}{\sum_{G \in \mathcal{H}_{\textbf{m}}, \; \ell(\textbf{m})=2}\mathbb{P}(\mbox{$G$  outputted})}\\
	&\leq  3 \frac{\Delta^4}{\epsilon^3M}\leq \frac{3}{4}.
	\end{align*}
	Therefore, the probability of ever initiating part $(b)$ is at most  $3/4$. We deduce that the probability of no rejection happening during a single run of \Gen\ is at least $c_1/4$. \qed

	\bibliographystyle{plain}
	\bibliography{matrix}

\end{document}